\documentclass[11pt]{article}
\usepackage{bbm}

\usepackage{amsfonts}


\usepackage{mathrsfs,amssymb,amsmath,amsfonts}
\usepackage{amsthm}
\usepackage{enumerate}
\usepackage{color}
\usepackage[dvipdfm]{hyperref}

\topmargin=0pt \pagestyle{plain} \raggedbottom \topmargin=-1cm
\oddsidemargin=0mm \textwidth 160mm \textheight 240mm

\newtheorem{theorem}{Theorem}[section]
\newtheorem{lemma}[theorem]{Lemma}

\newtheorem{proposition}[theorem]{Proposition}
\newtheorem{corollary}[theorem]{Corollary}
\newtheorem{remark}[theorem]{Remark}

\theoremstyle{definition}
\newtheorem{example}[theorem]{Example}

\newenvironment{keywords}{{\bf Key words: }}{}
\newenvironment{AMS}{{\bf AMS subject classification: }}{}

\numberwithin{equation}{section}

\newcommand{\norm}[1]{\left\Vert#1\right\Vert}

\newcommand{\tr}{\mathop{\mathrm{tr}}}
\newcommand{\diag}{\mathop{\mathrm{diag}}}


\begin{document}

\title{Infinite horizon jump-diffusion forward-backward stochastic
differential equations and their application to backward
linear-quadratic problems \footnote{This work is supported by the
National Natural Science Foundation of China (11471192) and the
Natural Science Foundation of Shandong Province (JQ201401).} }

\author{Zhiyong Yu \footnote{Email: {\tt
yuzhiyong@sdu.edu.cn}}\\
{\small School of Mathematics, Shandong University, Jinan 250100,
China}}

\maketitle

\begin{abstract}
In this paper, we investigate infinite horizon jump-diffusion
forward-backward stochastic differential equations under some
monotonicity conditions. We establish an existence and uniqueness
theorem, two stability results and a comparison theorem for
solutions to such kind of equations. Then the theoretical results
are applied to study a kind of infinite horizon backward stochastic
linear-quadratic optimal control problems, and then differential
game problems. The unique optimal controls for the control problems
and the unique Nash equilibrium points for the game problems are
obtained in closed forms.
\end{abstract}

\begin{keywords}
Forward-backward stochastic differential equation, monotonicity
condition, stochastic optimal control, nonzero-sum stochastic
differential game, linear-quadratic problem.
\end{keywords}\\

\begin{AMS}
60H10, 93E20, 49N10.
\end{AMS}

\section{Introduction}


Coupled forward-backward stochastic differential equations (SDEs)
are encountered when one applies the classical stochastic maximum
principle to optimal control or differential game problems (see Ma
and Yong \cite{MaYong1999}, Yong and Zhou \cite{YongZhou1999}). The
existence and uniqueness of solutions to such kind of equations are
closely linked to that of optimal controls or Nash equilibrium
points. Forward-backward SDEs are also used to give a probabilistic
interpretation for quasilinear second order partial differential
equations (PDEs) of elliptic or parabolic type (see Pardoux and Tang
\cite{PardouxTang1999}, Wu and Yu \cite{WuYu2014}), which
generalized the classical Feynman-Kac formula for linear PDEs.
Moreover, in mathematical finance forward-backward SDEs are often
adopted to describe the models involving large investors (see for
example Cvitani\'{c} and Ma \cite{CvitanicMa1996}).

Finite horizon forward-backward SDEs were first investigated by
Antonelli \cite{Antonelli1993} and a local existence and uniqueness
result was obtained. He also constructed a counterexample showing
that, a large time duration might lead to non-solvability just under
the Lipschitz assumption. For the global solvability results, three
fundamental methods are available. The first one is the {\it method
of contraction mapping} used by Pardoux and Tang
\cite{PardouxTang1999}. The second one concerns a kind of {\it
four-step scheme} approach introduced by Ma, Protter and Yong
\cite{MaProtterYong1994} which can be regarded as a combination of
the methods of PDEs and probability theory. This method requires the
non-degeneracy of the forward diffusion, and is only effective in
Markovian frameworks. The third one called {\it method of
continuation} is probabilistic. This method gets rid of the
restriction of non-degeneracy, and can deal with non-Markovian
forward-backward SDEs. As a trade-off, a kind of monotonicity
conditions on the coefficients is introduced to ensure the
solvability, which is restrictive in a different way. This method is
initiated by Hu and Peng \cite{HuPeng1995}, Peng and Wu
\cite{PengWu1999}. Later, Yong \cite{Yong1997,Yong2010} made
improvements and made the method more systematic. For some recent
developments on finite horizon forward-backward SDEs, one can refer
to Ma et al. \cite{MWZZ2011}.

In 2000, Peng and Shi \cite{PengShi2000}, for the first time,
studied infinite horizon forward-backward SDEs driven by Brownian
motions employing the method of continuation. Later, Wu
\cite{Wu2003} studied this problem in some different monotonicity
framework from \cite{PengShi2000}. Yin \cite{Yin2008,Yin2011}
investigates the same issue by the method of contraction mapping.
Some existence and uniqueness results and comparison theorems were
obtained. In this paper, we consider a kind of generalized infinite
horizon forward-backward SDEs driven by both Brownian motions and
Poisson processes as follows:
\begin{equation}\label{Sec1_FBSDE}
\left\{
\begin{aligned}
dx(t) =\ & b(t,x(t),y(t),z(t),r(t,\cdot)) dt
+\sigma(t,x(t),y(t),z(t),r(t,\cdot)) dW(t)\\
& +\int_{\mathcal E} \gamma(t,e,x(t-),y(t-),z(t),r(t,e)) \tilde
N(dt,de),\\
-dy(t) =\ & g(t,x(t),y(t),z(t),r(t,\cdot)) dt -z(t) dW(t)
-\int_{\mathcal E} r(t,e) \tilde N(dt, de),\\
x(0) =\ & \Phi(y(0)),
\end{aligned}
\right.
\end{equation}
where the notations and mappings will be given in Section \ref{Sec2}
and Section \ref{Sec3}. We adopt the model with random jumps, since
jump-diffusion processes characterize stochastic phenomena more
often and accurate than just diffusion processes, which provide us
with more realistic models in practice. For example, in finance,
stock prices often exhibit some jump behaviors. Moreover, financial
markets with jump stock prices provide a rich family of incomplete
financial models. For more information about jump diffusion models,
the interested readers may be referred to Cont and Tankov
\cite{ContTankov2004}, {\O}ksendal and Sulem \cite{OS2007}, Shen,
Meng and Shi \cite{ShenMengShi2014}.


In this paper we study the solvability of forward-backward SDEs by
virtue of the method of continuation. The idea is to introduce a
family of infinite horizon forward-backward SDEs parameterized by
$\alpha\in [0,1]$ such that, when $\alpha=1$ the forward-backward
SDE coincides with \eqref{Sec1_FBSDE} and when $\alpha=0$ the
forward-backward SDE is uniquely solvable. We will show that there
exists a fixed step-length $\delta_0>0$, such that, if, for some
$\alpha_0\in [0,1)$, the parameterized forward-backward SDE is
uniquely solvable, then the same conclusion holds for $\alpha_0$
being replaced by $\alpha_0+\delta\leq 1$ with $\delta\in
[0,\delta_0]$. Once this has been proved, we can increase the
parameter $\alpha$ step by step and finally reach $\alpha=1$.

It is worth noting that, we study a kind of more general coupled
forward-backward SDEs in comparison with
\cite{PengShi2000,Wu2003,Yin2008,Yin2011}. Besides the coupling in
$b, \sigma, \gamma$ and $g$, in this paper the initial values are
also in a coupled form: $x(0) = \Phi(y(0))$ (see
\eqref{Sec1_FBSDE}). The traditional technique treating the coupling
in the initial values (or terminal values) when the horizon is
finite (see for example \cite{HuPeng1995,Yong1997,PengWu1999}) is to
parameterize and analyze the initial coefficient $\Phi$ as the same
as other coefficients $(b,\sigma,\gamma,g)$. When this traditional
technique is used to the case of infinite horizon, we can solve two
special cases: (i) the decoupled case: $\Phi(y(0))=x_0$; (ii) the
strong monotonicity case: there exists a constant $\nu>0$ such that,
for any $y_1,y_2\in\mathbb R^n$, $\langle \Phi(y_1)-\Phi(y_2),\
y_1-y_2 \rangle \leq -\nu|y_1-y_2|^2$. However, in many practical
stochastic optimization problems, these two kinds of conditions are
too strong to satisfy naturally. In the present paper, instead of
the traditional parameterization technique, we employ the classical
mean value theorem of continuous functions and some delicate
techniques to handle the coupling between the two initial values.
This technique was introduced for the first time by Wu and Yu
\cite{WuYu2014} to analyze some algebraic equations. By virtue of
the new technique, the conclusion is improved to a general
monotonicity case: $\langle \Phi(y_1)-\Phi(y_2),\ y_1-y_2 \rangle
\leq 0$ (see (H3.3)-(ii)) which is natural in the viewpoint of
practical optimization problems. A potential application of the new
technique is to deal with the associated finite horizon problems and
hope to improve the corresponding results.

Since backward SDEs on an infinite time horizon are well defined
dynamic systems (see Theorem \ref{Sec2_THM}), it is natural and
appealing to study the corresponding optimal control and game
problems arising from various fields. For example, in mathematical
finance, the first process $y$ of solution to some backward SDE is
used to represent the price of some European contingent claim, and
the other processes $(z,r)$ of solution are used to characterize the
corresponding portfolio. Then, in an incomplete security market, the
minimum price of some contingent claim can be given by
$\inf_{v(\cdot)\in\mathcal V} y^v(0)$, where $(y^v,z^v,r^v)$ is the
solution of some controlled backward SDEs and $v(\cdot)\in\mathcal
V$ is the related control process.

As an application of theoretical results, we study a kind of
backward stochastic linear-quadratic (LQ) optimal control problems,
and then the general differential game problems. The LQ problems
constitute an extremely important category of optimization problems,
because many problems arising from practice can be modeled by them,
and more importantly, many non-LQ problems can be approximated
reasonably by LQ problems. On the other hand, LQ problems tend to
have elegant and complete solutions due to their simple and nice
structures, which also provide some understanding and preliminaries
for the general nonlinear problems. By virtue of the unique
solvability of forward-backward SDEs, we obtain unique optimal
controls for control problems and unique Nash equilibrium points for
game problems in closed forms. To our best knowledge, it is the
first time to study this kind of infinite horizon backward LQ
problems. The theoretical results of forward-backward SDEs can also
be applied to nonlinear infinite horizon backward optimization
problems. This subject will be detailed in our future works.

The present paper has the following improvements. (i) Compared with
Peng and Shi \cite{PengShi2000}, we clarify many ambiguous
arguments, supplement some necessary details and improve some
proofs. (ii) A general case in which the two initial values are in a
coupled form is studied in this paper, and to deal with it we
introduce a new technique which also can be applied to analyze other
problems. (iii) We provide an important application of the
theoretical results to infinite horizon backward LQ problems. (iv)
In order to match practical problems more accurately, we adopt a
wider jump-diffusion model.

The rest of this paper is organized as follows. In Section
\ref{Sec2}, we introduce some notations and some preliminary results
on infinite horizon (forward) SDEs and backward SDEs, especially an
existence and uniqueness result of backward SDEs. In Section
\ref{Sec3}, we devote ourselves to investigating the infinite
horizon jump-diffusion coupled forward-backward SDEs. We establish
an existence and uniqueness theorem, two stability results and a
comparison theorem for solutions to forward-backward SDEs. In
Section \ref{Sec4}, we apply the existence and uniqueness theorem to
study a kind of infinite horizon backward stochastic LQ optimal
control and differential game problems. We obtain the unique optimal
control for the control problem, and the unique Nash equilibrium
point for the game problem in closed forms.

\section{Notations and preliminaries on SDEs and backward
SDEs}\label{Sec2}

Let $\mathbb R^n$ be the $n$-dimensional Euclidean space with the
usual Euclidean norm $|\cdot|$ and the usual Euclidean inner product
$\langle \cdot,\ \cdot \rangle$. Let $\mathbb R^{n\times m}$ be the
space consisting of all $(n\times m)$ matrices with the inner
product:
\[
\langle A,\ B \rangle = \tr\{ AB^\top \},\quad \mbox{for any } A, B
\in \mathbb R^{n\times m},
\]
where $\top$ denotes the transpose of matrices. Thus the norm $|A|$
of $A$ induced by the inner product is given by $|A| =
\sqrt{\mbox{tr}{AA^\top}}$.

Let $(\Omega,\mathcal F,\mathbb F,\mathbb P)$ be a complete filtered
probability space. The filtration $\mathbb F = \{ \mathcal F_t;\
0\leq t<\infty \}$ is generated by two mutually independent
stochastic sources augmented by all $\mathbb P$-null sets. One is a
$d$-dimensional standard Brownian motion $W =
(W_1,W_2,\dots,W_d)^\top$, and the other one consists of $l$
independent Poisson random measures $N = (N_1,N_2,\dots,N_l)^\top$
defined on $\mathbb R_{+}\times\mathcal E$, where $\mathcal E
\subset\mathbb R^{\bar l}\setminus \{0\}$ is a nonempty Borel subset
of some Euclidean space. The compensators of $N$ are $\bar N(dt,de)
= (\pi_1(de)dt, \pi_2(de)dt,\dots,\pi_l(de)dt)$ which make $\{
\tilde N((0,t]\times A) =(N-\bar N)((0,t]\times A);\ 0\leq t<\infty
\}$ a martingale for any $A$ belonging to the Borel field $\mathcal
B(\mathcal E)$ with $\pi_i(A)<\infty$, $i=1,2,\dots,l$. Here, for
each $i=1,2,\dots,l$, $\pi_i$ is a given $\sigma$-finite measure on
the measurable space $(\mathcal E, \mathcal B(\mathcal E))$
satisfying $\int_{\mathcal E} (1\wedge |e|^2) \pi_i(de)<\infty$.

We introduce some spaces:
\begin{itemize}
\item $L^{2,K}_{\mathbb F}(0,\infty;\mathbb R^n)$ where $K\in\mathbb R$, the
space of $\mathbb R^n$-valued $\mathbb F$-progressively measurable
processes $f$ defined on $[0,\infty)$ such that
\[
\norm{f(\cdot)}_{L^{2,K}_{\mathbb F}} := \left(\mathbb
E\int_0^\infty |f(t)|^2 e^{Kt} dt\right)^{\frac 1 2} < \infty,
\]
and for simplicity we denote $L^2_{\mathbb F}(0,\infty;\mathbb R^n)
:= L^{2,0}_{\mathbb F}(0,\infty;\mathbb R^n)$;
\item $S^2_{\mathbb F}(0,T;\mathbb R^n)$ where $T>0$, the space of $\mathbb
F$-progressively measurable processes $f$ which have
right-continuous paths with left limits such that
\[
\norm{f(\cdot)}_{S^2_{\mathbb F}}:= \left(\mathbb E\left[ \sup_{t\in
[0,T]} |f(t)|^2 \right]\right)^{\frac 1 2 } < \infty,
\]
and $S^{2,loc}_{\mathbb F}(0,\infty;\mathbb R^n) := \bigcap_{T>0}
S^2_{\mathbb F}(0,T;\mathbb R^n)$;
\item $\mathcal X^K(0,\infty;\mathbb R^n) := L^{2,K}_{\mathbb F}(0,\infty;\mathbb R^n)\cap S^{2,loc}_{\mathbb
F}(0,\infty;\mathbb R^n)$; similarly, we denote $\mathcal
X(0,\infty;\mathbb R^n) := \mathcal X^0(0,\infty;\mathbb R^n) =
L^2_{\mathbb F}(0,\infty;\mathbb R^n) \cap S^{2,loc}_{\mathbb
F}(0,\infty;\mathbb R^n)$;
\item $L^2(\mathcal E,\mathcal B(\mathcal E),\pi;\mathbb R^{n\times
l})$, the space of $\pi$-almost sure equivalence classes $r(\cdot) =
(r_1(\cdot),\cdots,r_l(\cdot))$ formed by the mappings from
$\mathcal E$ to the space of $\mathbb R^{n\times l}$-valued matrices
such that
\[
\norm{r(\cdot)} := \left( \int_{\mathcal E} \tr \left\{
r(e)\diag(\pi(de))r(e)^\top \right\} \right)^{\frac 1 2} =\left(
 \sum_{i=1}^l \int_{\mathcal E} |r_i(e)|^2 \pi_i(de) \right)^{\frac 1 2}
 <\infty.
\]
This space is equipped with the following inner product:
\[
\langle r(\cdot),\ \bar r(\cdot) \rangle := \int_{\mathcal E} \tr
\left\{ r(e) \diag(\pi(de)) \bar r(e)^\top \right\},\quad\forall\
r(\cdot),\bar r(\cdot)\in L^2(\mathcal E,\mathcal B(\mathcal
E),\pi;\mathbb R^{n\times l});
\]
\item $M^{2,K}_{\mathbb F}(0,\infty;\mathbb R^{n\times l})$ where $K\in\mathbb R$, the space of $\mathbb R^{n\times
l}$-valued, $\mathcal P\otimes \mathcal B(\mathcal E)$-measurable
processes $r$ such that
\[
\begin{aligned}
\norm{r(\cdot,\cdot)}_{M^{2,K}_{\mathbb F}}:=\ & \left(\mathbb
E\int_0^\infty\int_{\mathcal E} \tr \left\{
r(t,e)\diag(\pi(de))r(t,e)^\top \right\} e^{Kt} dt\right)^{\frac 1 2
}\\
=\ & \left( \mathbb E\int_0^\infty \norm{r(t,\cdot)}^2 e^{Kt} dt
\right)^{\frac 1 2} <\infty,
\end{aligned}
\]
where $\mathcal P$ is the $\sigma$-algebra generated by the $\mathbb
F$-progressively measurable processes on $[0,\infty)\times \Omega$,
and we denote $M^{2}_{\mathbb F}(0,\infty;\mathbb R^{n\times l}) : =
M^{2,0}_{\mathbb F}(0,\infty;\mathbb R^{n\times l})$;
\end{itemize}
Clearly, for any $K_1 < K_2$, $L^{2,K_2}_{\mathbb
F}(0,\infty;\mathbb R^n)\subset L^{2,K_1}_{\mathbb
F}(0,\infty;\mathbb R^n)$ and $M^{2,K_2}_{\mathbb
F}(0,\infty;\mathbb R^{n\times l}) \subset M^{2,K_1}_{\mathbb
F}(0,\infty;\mathbb R^{n\times l})$, i.e. the sequences of spaces
$\{L^{2,K}_{\mathbb F}(0,\infty;\mathbb R^n)\}_{K\in\mathbb R}$ and
$\{M^{2,K}_{\mathbb F}(0,\infty;\mathbb R^{n\times
l})\}_{K\in\mathbb R}$ are decreasing in $K$.

Further, we define the space $\mathcal R := \mathbb R^n\times
\mathbb R^n\times\mathbb R^{n\times d} \times L^2(\mathcal
E,\mathcal B(\mathcal E),\pi;\mathbb R^{n\times l})$. For any
$\theta_1 = (x_1,y_1,z_1,r_1(\cdot))$, $\theta_2 =
(x_2,y_2,z_2,r_2(\cdot)) \in \mathcal R$, the inner product is
defined by
\[
\langle \theta_1,\ \theta_2 \rangle := \langle x_1,\ x_2 \rangle
+\langle y_1,\ y_2 \rangle +\langle z_1,\ z_2 \rangle +\langle
r_1(\cdot),\ r_2(\cdot)\rangle.
\]
Then the norm of $\mathcal R$ is deduced by $|\theta| :=
\sqrt{\langle \theta,\ \theta \rangle}$. We also define
\[
\mathcal L^{2,K}_{\mathbb F}(0,\infty) := L^{2,K}_{\mathbb
F}(0,\infty;\mathbb R^n) \times L^{2,K}_{\mathbb F}(0,\infty;\mathbb
R^n) \times L^{2,K}_{\mathbb F}(0,\infty;\mathbb R^{n\times d})
\times M^{2,K}_{\mathbb F}(0,\infty;\mathbb R^{n\times l})
\]
with the norm
\[
\norm{\theta(\cdot)}_{\mathcal L^{2,K}_\mathbb F} = \left\{ \mathbb
E\int_0^\infty |\theta(t)|^2 e^{Kt} dt \right\}^{\frac 1 2} =\left\{
\mathbb E\int_0^\infty \Big[ |x(t)|^2 +|y(t)|^2 +|z(t)|^2
+\norm{r(t,\cdot)}^2 \Big] e^{Kt} dt \right\}^{\frac 1 2}.
\]
Moreover, $\mathcal L^{2}_{\mathbb F}(0,\infty) := \mathcal
L^{2,0}_{\mathbb F}(0,\infty)$.

Now, let us consider an infinite horizon (forward) stochastic
differential equation (SDE):
\[
\begin{aligned}
x(t)=\ & x_0 +\int_0^t b(s,x(s))ds +\int_0^t \sigma(s,x(s)) dW(s)\\
& +\int_0^t\int_{\mathcal E} \gamma(s,e,x(s-)) \tilde N(ds,de),
\quad t\in [0,\infty),
\end{aligned}
\]
which is also expressed in a differential form:
\begin{equation}\label{Sec2_SDE}
\left\{
\begin{aligned}
dx(t) =\ & b(t,x(t)) dt +\sigma(t,x(t)) dW(t) +\int_{\mathcal E}
\gamma(t,e,x(t-)) \tilde N(dt,de),\quad t\in
[0,\infty),\\
x(0) =\ & x_0,
\end{aligned}
\right.
\end{equation}
where $x_0\in\mathbb R^n$, $b: \Omega\times [0,\infty) \times\mathbb
R^n\rightarrow\mathbb R^n$, $\sigma: \Omega\times [0,\infty)
\times\mathbb R^n\rightarrow\mathbb R^{n\times d}$ and
$\gamma:\Omega\times [0,\infty) \times\mathcal E\times\mathbb R^n
\rightarrow \mathbb R^{n\times l}$. Moreover, we introduce the
following assumptions:
\begin{enumerate}[(\mbox{H1}.1)]
\item For any $x\in\mathbb R^n$, $b(\cdot,x)$, $\sigma(\cdot,x)$ are
$\mathbb F$-progressively measurable and $\gamma(\cdot,\cdot,x)$ is
$\mathcal P\otimes\mathcal B(\mathcal E)$-measurable. Moreover,
there exists a constant $K\in\mathbb R$ such that $b(\cdot,0)\in
L^{2,K}_{\mathbb F}(0,\infty;\mathbb R^n)$, $\sigma(\cdot,0)\in
L^{2,K}_{\mathbb F}(0,\infty;\mathbb R^{n\times d})$ and
$\gamma(\cdot,\cdot,0) \in M^{2,K}_{\mathbb F}(0,\infty;\mathbb
R^{n\times l})$.
\item $b$, $\sigma$ and $\gamma$ are Lipschitz continuous with
respect to $x$, i.e. there exists a constant $C>0$ such that for any
$t\in [0,\infty)$, any $x_1$, $x_2\in\mathbb R^n$,
\[
|b(t,x_1)-b(t,x_2)| +|\sigma(t,x_1)-\sigma(t,x_2)|
+\norm{\gamma(t,\cdot,x_1)-\gamma(t,\cdot,x_2)} \leq C|x_1-x_2|.
\]
\end{enumerate}
By the classical theory of SDEs, under Assumptions (H1.1)-(H1.2),
SDE \eqref{Sec2_SDE} admits a unique strong solution. Precisely, for
any $T\in [0,\infty)$,
\[
\mathbb E\left[ \sup_{x\in [0,T]} |x(t)|^2 \right] <\infty,
\]
i.e. $x\in S^2_{\mathbb F}(0,T;\mathbb R^n)$, and then $x\in
S^{2,loc}_{\mathbb F}(0,\infty;\mathbb R^n)$.
\begin{proposition}\label{Sec2_Prop}
Let Assumptions (H1.1)-(H1.2) hold. We further assume the unique
solution $x$ of SDE \eqref{Sec2_SDE} belongs to $L^{2,K}_{\mathbb
F}(0,\infty;\mathbb R^n)$ where the constant $K$ is given by (H1.1).
Then, we have
\begin{enumerate}[(i)]
\item $x\in\mathcal X^K(0,\infty;\mathbb R^n)$;
\item $\mathbb E[|x(t)|^2 e^{Kt}]$ is bounded and continuous;
\item $\lim_{t\rightarrow\infty} \mathbb E [|x(t)|^2 e^{Kt}] =0$.
\end{enumerate}
\end{proposition}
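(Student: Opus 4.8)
Part (i) is essentially immediate. By the classical SDE theory recalled just above the statement, Assumptions (H1.1)--(H1.2) already guarantee $x\in S^{2,loc}_{\mathbb F}(0,\infty;\mathbb R^n)$, while the standing hypothesis of the proposition is precisely $x\in L^{2,K}_{\mathbb F}(0,\infty;\mathbb R^n)$; the intersection of these two memberships is the definition of $\mathcal X^K(0,\infty;\mathbb R^n)$. The substance is thus in (ii)--(iii), and the plan is to study the scalar function $\phi(t):=\mathbb E\big[|x(t)|^2 e^{Kt}\big]$, $t\in[0,\infty)$.

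First I would apply It\^o's formula for jump-diffusions to $|x(t)|^2 e^{Kt}$. Abbreviating $b_u=b(u,x(u))$, $\sigma_u=\sigma(u,x(u))$, $\gamma_u=\gamma(u,\cdot,x(u))$, and using that the compensated correction of the jump part of $|x|^2$ is exactly $\norm{\gamma_u}^2$, one obtains, for $0\le s\le t$,
\[
|x(t)|^2e^{Kt}-|x(s)|^2e^{Ks}=\int_s^t e^{Ku}\Big(K|x(u)|^2+2\langle x(u),b_u\rangle+|\sigma_u|^2+\norm{\gamma_u}^2\Big)du+(\text{local martingale}).
\]
Since only $x\in S^{2,loc}_{\mathbb F}$ is at hand rather than global moment bounds, the local martingale must be removed by a localization argument: stop at $\tau_N=\inf\{t\ge0:|x(t)|\ge N\}$ so that the stopped stochastic integrals are true martingales, take expectations, and then let $N\to\infty$ using monotone/dominated convergence on each finite interval (here one uses $x\in S^2_{\mathbb F}(0,T;\mathbb R^n)$ for every $T$, together with (H1.1) to dominate the drift integrand). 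I expect this to be the only step requiring genuine care; the rest is elementary. The outcome is the representation
\[
\phi(t)=|x_0|^2+\int_0^t h(u)\,du,\qquad h(u):=\mathbb E\Big[e^{Ku}\big(K|x(u)|^2+2\langle x(u),b_u\rangle+|\sigma_u|^2+\norm{\gamma_u}^2\big)\Big].
\]

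Next I would show $h\in L^1(0,\infty)$. Using (H1.2) one has $|b_u|^2\le 2|b(u,0)|^2+2C^2|x(u)|^2$, and likewise for $\sigma_u$ and $\gamma_u$; combined with $2\langle x,b\rangle\le|x|^2+|b|^2$ this gives a pointwise bound $|h(u)|\le C'\,\mathbb E\big[e^{Ku}\big(|x(u)|^2+|b(u,0)|^2+|\sigma(u,0)|^2+\norm{\gamma(u,\cdot,0)}^2\big)\big]=:C'\psi(u)$, where $C'$ depends only on $K$ and $C$. By Tonelli's theorem and the definitions of the weighted norms,
\[
\int_0^\infty\psi(u)\,du=\norm{x}_{L^{2,K}_{\mathbb F}}^2+\norm{b(\cdot,0)}_{L^{2,K}_{\mathbb F}}^2+\norm{\sigma(\cdot,0)}_{L^{2,K}_{\mathbb F}}^2+\norm{\gamma(\cdot,\cdot,0)}_{M^{2,K}_{\mathbb F}}^2<\infty
\]
by (H1.1) and the standing hypothesis, so indeed $h\in L^1(0,\infty)$.

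From this the three assertions follow. Boundedness: $0\le\phi(t)\le|x_0|^2+\int_0^\infty|h(u)|\,du<\infty$ for all $t$. Continuity on $[0,\infty)$: $|\phi(t)-\phi(s)|\le\int_s^t|h(u)|\,du\to0$ as $s\to t$, by absolute continuity of the Lebesgue integral. Finally, for (iii): the Cauchy criterion applied to $t\mapsto\int_0^t h$ shows $\lim_{t\to\infty}\phi(t)$ exists and is finite; but $\int_0^\infty\phi(u)\,du=\mathbb E\int_0^\infty|x(u)|^2e^{Ku}\,du=\norm{x}_{L^{2,K}_{\mathbb F}}^2<\infty$ and $\phi\ge0$, so a nonzero limit would force this integral to diverge. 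Hence $\lim_{t\to\infty}\phi(t)=0$, which completes the proof.
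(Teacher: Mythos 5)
Your proposal is correct and follows essentially the same route as the paper: apply It\^{o}'s formula to $|x(t)|^2e^{Kt}$, use the Lipschitz condition (H1.2) and the integrability in (H1.1) to show the resulting integrand is in $L^1(0,\infty)$, deduce boundedness and continuity of $\mathbb E[|x(t)|^2e^{Kt}]$ from that, and conclude the limit is zero because a positive limit would contradict $x\in L^{2,K}_{\mathbb F}(0,\infty;\mathbb R^n)$. Your explicit localization of the local martingale before taking expectations is a detail the paper glosses over, but it does not change the argument.
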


\begin{proof}
The assertion (i) is obvious since $x$ belongs to both
$S^{2,loc}_{\mathbb F} (0,\infty;\mathbb R^n)$ and $L^{2,K}_{\mathbb
F} (0,\infty;\mathbb R^n)$. For any $t\in [0,\infty)$, we apply
It\^{o}'s formula to $|x(s)|^2 e^{Ks}$ on the interval $[0,t]$:
\[
\begin{aligned}
\mathbb E \left[ |x(t)|^2 e^{Kt} \right] =\ & |x_0|^2 +\mathbb
E\int_0^t 2\Big\langle x(s),\ b(s,x(s)) \Big\rangle e^{Ks} ds\\
& +\mathbb E\int_0^t \Big[ K|x(s)|^2 +|\sigma(s,x(s))|^2
+\norm{\gamma(s,\cdot,x(s))}^2 \Big] e^{Ks} ds.
\end{aligned}
\]
By the Lipschitz condition (H1.2), we have
\[
\begin{aligned}
\mathbb E \left[ |x(t)|^2 e^{Kt} \right] \leq\ & |x_0|^2 +
(1+|K|+2C+4C^2) \mathbb E \int_0^t   |x(s)|^2
e^{Ks} ds\\
& +\mathbb E\int_0^t \left[ |b(s,0)|^2 +2|\sigma(s,0)|^2
+2\norm{\gamma(s,\cdot,0)}^2 \right] e^{Ks} ds,
\end{aligned}
\]
where $C$ is the Lipschitz constant. Due to the fact that
$x(\cdot)$, $b(\cdot,0)\in L^{2,K}_{\mathbb F}(0,\infty;\mathbb
R^n)$, $\sigma(\cdot,0)\in L^{2,K}_{\mathbb F} (0,\infty;\mathbb
R^{n\times d})$ and $\gamma(\cdot,\cdot,0)\in M^{2,K}_{\mathbb
F}(0,\infty;\mathbb R^{n\times l})$ (see (H1.1)), the above
inequality implies the deterministic process $\{\mathbb E[|x(t)|^2
e^{Kt}];\ t\geq 0\}$ is bounded. Moreover, in the same way, applying
It\^{o}'s formula to $|x(s)|^2 e^{Ks}$ on the interval $[t_1,t_2]$
leads to
\[
\begin{aligned}
\Big| \mathbb E\left[ |x(t_2)|^2 e^{Kt_2}\right] -\mathbb E \left[
|x(t_1)|^2 e^{Kt_1} \right] \Big| \leq\ & (1+|K|+2C+4C^2) \mathbb E
\int_{t_1}^{t_2} |x(s)|^2
e^{Ks} ds\\
& +\mathbb E\int_{t_1}^{t_2} \left[ |b(s,0)|^2 +2|\sigma(s,0)|^2
+2\norm{\gamma(s,\cdot,0)}^2 \right] e^{Ks} ds,
\end{aligned}
\]
which implies that the process $\{\mathbb E[|x(t)|^2 e^{Kt}];\ t\geq
0\}$ is continuous. We have proved (ii). The above inequality also
shows that, $\mathbb E\left[ |x(t_2)|^2 e^{Kt_2}\right] -\mathbb E
\left[ |x(t_1)|^2 e^{Kt_1} \right] \rightarrow 0$ as
$t_1,t_2\rightarrow\infty$, then $\lim_{t\rightarrow\infty} \mathbb
E\left[ |x(t)|^2 e^{Kt} \right]$ exists. Furthermore, due to
\[
\int_0^\infty \mathbb E\left[ |x(t)|^2 e^{Kt} \right] dt <\infty,
\]
we get the desired conclusion
\[
\lim_{t\rightarrow\infty} \mathbb E\left[ |x(t)|^2 e^{Kt} \right]
=0.
\]
\end{proof}

In the rest of this section, we shall consider an infinite horizon
backward SDE as follows:
\[
\begin{aligned}
y(t) =\ & \int_t^\infty \Big[ G(s,y(s),z(s),r(s,\cdot)) +\varphi(s)
\Big] ds -\int_t^\infty z(s) dW(s)\\
& -\int_t^\infty\int_{\mathcal E} r(s,e) \tilde N(ds,de), \quad t\in
[0,\infty),
\end{aligned}
\]
which is denoted also in the following differential form:
\begin{equation}\label{Sec2_BSDE}
-dy(t) = \Big[ G(t,y(t),z(t),r(t,\cdot)) +\varphi(t) \Big]dt
-z(t)dW(t) -\int_{\mathcal E} r(t,e) \tilde N(dt, de),\quad t\in
[0,\infty),
\end{equation}
where $G: \Omega\times [0,\infty) \times \mathbb R^m \times \mathbb
R^{m\times d} \times L^2(\mathcal E,\mathcal B(\mathcal
E),\pi;\mathbb R^{m\times l}) \rightarrow \mathbb R^m$ and $\varphi:
\Omega\times [0,\infty) \rightarrow \mathbb R^m$. We assume the
following assumptions on the coefficients $(G,\varphi)$:
\begin{enumerate}[(\mbox{H2}.1)]
\item For any $(y,z,r(\cdot))\in \mathbb R^m \times \mathbb R^{m\times d} \times L^2(\mathcal E, \mathcal B(\mathcal E), \pi; \mathbb R^{m\times
l})$, $G(\cdot,y,z,r(\cdot))$ is $\mathbb F$-progressively
measurable, and satisfies $G(t,0,0,0) = 0$ for any $t\in
[0,\infty)$. Moreover, there exists a constant $K\in\mathbb R$ such
that $\varphi\in L^{2,K}_{\mathbb F}(0,\infty;\mathbb R^m)$.
\item $G$ is Lipschitz continuous with respect to $(y,z,r(\cdot))$,
i.e. there exist constants $C_0\geq 0$, $C_1\geq 0$ and $C_2\geq 0$
such that for any $t\in [0,\infty)$, any $y_1, y_2 \in\mathbb R^m$,
any $z_1, z_2 \in\mathbb R^{m\times d}$, any $r_1(\cdot), r_2(\cdot)
\in L^2(\mathcal E,\mathcal B(\mathcal E),\pi;\mathbb R^{m\times
l})$,
\[
|G(t,y_1,z_1,r_1(\cdot)) -G(t,y_2,z_2,r_2(\cdot))| \leq C_0
|y_1-y_2| +C_1|z_1-z_2| +C_2\norm{r_1(\cdot)-r_2(\cdot)}.
\]
\item $G$ satisfies some `weak monotonicity' conditions in the
following sense: there exists a constant $\rho\in\mathbb R$ such
that for any $t\in [0,\infty)$, any $y_1,y_2\in\mathbb R^m$, any
$z\in\mathbb R^{m\times d}$, any $r(\cdot)\in L^2(\mathcal
E,\mathcal B(\mathcal E),\pi;\mathbb R^{m\times l})$,
\[
\langle G(t,y_1,z,r(\cdot))-G(t,y_2,z,r(\cdot)), y_1-y_2 \rangle
\leq -\rho |y_1-y_2|^2.
\]
\item $K+2\rho-2C_1^2-2C_2^2 >0$.
\end{enumerate}

\begin{remark}
\begin{enumerate}[(i)]
\item In the classical form of backward SDEs, the coefficient is
usually denoted by $g(t,y,z,r(\cdot))$. While in the present paper,
for convenience, we set
\[
G(t,y,z,r(\cdot)) = g(t,y,z,r(\cdot))-g(t,0,0,0),\quad \varphi(t) =
g(t,0,0,0),
\]
and denote the coefficient $g$ by $G+\varphi$.
\item From the density of real numbers, it is easy to see that (H2.4) is
equivalent to the following statement: (H2.4') There exists a
constant $\delta>0$ such that
\begin{equation}\label{Sec2_delta}
K+2\rho-2C_1^2-2C_2^2-\delta>0.
\end{equation}
\end{enumerate}
\end{remark}

Naturally, a triple of mappings $(y(\cdot),z(\cdot),r(\cdot,\cdot))$
is called an adapted solution to backward SDE \eqref{Sec2_BSDE} if
and only if $y$ is an $\mathbb R^m$-valued $\mathbb F$-progressively
measurable process, $z$ is an $\mathbb R^{m\times d}$-valued
$\mathbb F$-progressively measurable process, $r$ is an $\mathbb
R^{m\times l}$-valued $\mathcal P\otimes\mathcal B(\mathcal
E)$-measurable process, and $(y,z,r)$ satisfies \eqref{Sec2_BSDE}.
Similar to forward SDEs, we have the following

\begin{corollary}\label{Sec2_Corollary}
Let Assumptions (H2.1) and (H2.2) hold. We further assume
$(y,z,r)\in L^{2,K}_{\mathbb F} (0,\infty;\mathbb R^m) \times
L^{2,K}_{\mathbb F}(0,\infty;\mathbb R^{m\times d}) \times
M^{2,K}_{\mathbb F}(0,\infty;\mathbb R^{m\times l})$ is a solution
to backward SDE \eqref{Sec2_BSDE} where the constant $K$ is given by
(H2.1). Then we have
\begin{enumerate}[(i)]
\item $y\in\mathcal X^K(0,\infty;\mathbb R^m)$;
\item $\mathbb E[|y(t)|^2 e^{Kt}]$ is bounded and continuous;
\item $\lim_{t\rightarrow\infty} \mathbb E [|y(t)|^2 e^{Kt}] =0$.
\end{enumerate}
\end{corollary}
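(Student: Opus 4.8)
The plan is to mimic the proof of Proposition \ref{Sec2_Prop}, treating the backward SDE \eqref{Sec2_BSDE} as a forward dynamical system once we integrate from $0$ to $t$. Writing $y(t) = y(0) - \int_0^t [G(s,y(s),z(s),r(s,\cdot))+\varphi(s)]\,ds + \int_0^t z(s)\,dW(s) + \int_0^t\int_{\mathcal E} r(s,e)\tilde N(ds,de)$, the process $y$ has right-continuous-with-left-limits paths and, by assumption, lies in both $L^{2,K}_{\mathbb F}(0,\infty;\mathbb R^m)$ and $S^{2,loc}_{\mathbb F}(0,\infty;\mathbb R^m)$; hence assertion (i) that $y\in\mathcal X^K(0,\infty;\mathbb R^m)$ is immediate from the definition of $\mathcal X^K$, exactly as in Proposition \ref{Sec2_Prop}(i).

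For (ii), I would apply It\^o's formula to $|y(s)|^2 e^{Ks}$ on $[0,t]$, which gives
\[
\mathbb E[|y(t)|^2 e^{Kt}] = \mathbb E[|y(0)|^2] + \mathbb E\int_0^t \Big[ K|y(s)|^2 - 2\langle y(s),\ G(s,y(s),z(s),r(s,\cdot))+\varphi(s)\rangle + |z(s)|^2 + \norm{r(s,\cdot)}^2 \Big] e^{Ks}\,ds.
\]
Using the Lipschitz condition (H2.2) together with $G(t,0,0,0)=0$ (H2.1), one bounds $|G(s,y(s),z(s),r(s,\cdot))| \leq C_0|y(s)|+C_1|z(s)|+C_2\norm{r(s,\cdot)}$, and then by Young's inequality the integrand is controlled by a constant times $|y(s)|^2 + |z(s)|^2 + \norm{r(s,\cdot)}^2 + |\varphi(s)|^2$, all weighted by $e^{Ks}$. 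Since $y,z,\varphi$ are in the respective $L^{2,K}$ spaces and $r\in M^{2,K}_{\mathbb F}$ by hypothesis, the right-hand side is bounded uniformly in $t$, giving boundedness of $\{\mathbb E[|y(t)|^2 e^{Kt}]\}$. Running the same It\^o computation on $[t_1,t_2]$ shows $\big|\mathbb E[|y(t_2)|^2 e^{Kt_2}] - \mathbb E[|y(t_1)|^2 e^{Kt_1}]\big|$ is bounded by the same integrand integrated over $[t_1,t_2]$, which is the tail of an integrable function and hence tends to $0$ as $t_1,t_2\to\infty$; this gives both continuity of $t\mapsto \mathbb E[|y(t)|^2 e^{Kt}]$ (so (ii) holds) and the existence of $\lim_{t\to\infty}\mathbb E[|y(t)|^2 e^{Kt}]$.

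For (iii), once the limit is known to exist, it must be zero because $\int_0^\infty \mathbb E[|y(t)|^2 e^{Kt}]\,dt < \infty$ (that is precisely $y\in L^{2,K}_{\mathbb F}$): a nonnegative integrable function on $[0,\infty)$ with a limit at infinity must have that limit equal to $0$. The only point requiring a little care — and the main, if minor, obstacle — is the justification that the stochastic integrals $\int_0^t z(s)e^{Ks/2}\cdots\,dW(s)$ and the compensated Poisson integral are true martingales (so their expectations vanish) rather than merely local martingales; this follows from the $L^{2,K}$ and $M^{2,K}$ integrability of $z$ and $r$ on every finite interval, which makes the quadratic-variation integrability condition hold. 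Everything else is a line-by-line transcription of the argument already given for the forward SDE in Proposition \ref{Sec2_Prop}, with $x_0$ replaced by $y(0)$, with the roles of the coefficients taken over by $-(G+\varphi)$, $z$, $r$, and noting that (H2.3)–(H2.4) play no role here since only the Lipschitz bound (H2.2) is needed for these three conclusions.
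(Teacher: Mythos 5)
Your proof is correct and follows essentially the same route as the paper: the paper also rewrites the backward SDE in forward form starting from $y(0)$, identifies the coefficients as $b(t,y)=-[G(t,y,z(t),r(t,\cdot))+\varphi(t)]$, $\sigma(t,y)=z(t)$, $\gamma(t,e,y)=r(t,e)$, checks that (H2.1)--(H2.2) imply (H1.1)--(H1.2) for these, and then invokes Proposition \ref{Sec2_Prop} directly rather than re-running the It\^o computation as you do. The only difference is one of presentation; your explicit estimates are exactly what Proposition \ref{Sec2_Prop} already contains.
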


\begin{proof}
Since $(y,z,r)$ is a solution to backward SDE \eqref{Sec2_BSDE},
then for any $t\in [0,\infty)$,
\[
\begin{aligned}
y(t) =\ & \int_t^\infty \Big[ G(s,y(s),z(s),r(s,\cdot)) +\varphi(s)
\Big] ds -\int_t^\infty z(s) dW(s) -\int_t^\infty\int_{\mathcal E}
r(s,e) \tilde N(ds,de)\\
=\ & \int_0^\infty \Big[ G(s,y(s),z(s),r(s,\cdot)) +\varphi(s) \Big]
ds -\int_0^\infty z(s) dW(s) -\int_0^\infty\int_{\mathcal E} r(s,e)
\tilde N(ds,de)\\
& -\int_0^t \Big[ G(s,y(s),z(s),r(s,\cdot)) +\varphi(s) \Big] ds
+\int_0^t z(s) dW(s) +\int_0^t\int_{\mathcal E} r(s,e) \tilde
N(ds,de)\\
=\ & y(0) -\int_0^t \Big[ G(s,y(s),z(s),r(s,\cdot)) +\varphi(s)
\Big] ds +\int_0^t z(s) dW(s) +\int_0^t\int_{\mathcal E} r(s,e)
\tilde N(ds,de).
\end{aligned}
\]
So the process $y\in L^{2,K}_{\mathbb F}(0,\infty;\mathbb R^m)$ can
be regarded as an adapted solution to a forward SDE with
\[
b(t,y) = -\Big[ G(t,y,z(t),r(t,\cdot)) +\varphi(t) \Big],\quad
\sigma(t,y) = z(t),\quad \gamma(t,e,y) =r(t,e).
\]
Under Assumptions (H2.1) and (H2.2), it is easy to check that the
above coefficients $(b,\sigma,\gamma)$ satisfy Assumptions
(H1.1)-(H1.2). By Proposition \ref{Sec2_Prop}, we get the
conclusions.
\end{proof}

In order to obtain an existence and uniqueness result for backward
SDE \eqref{Sec2_BSDE}, we first establish the following a priori
estimate.

\begin{lemma}\label{Sec2_Lemma}
Let Assumptions (H2.1)-(H2.4) hold. Let $(y_1,z_1,r_1)$ and
$(y_2,z_2,r_2)\in L^{2,K}_{\mathbb F}(0,\infty;\mathbb R^m) \times
L^{2,K}_{\mathbb F}(0,\infty;\mathbb R^{m\times d}) \times
M^{2,K}_{\mathbb F}(0,\infty;\mathbb R^{m\times l})$ be solutions to
backward SDEs \eqref{Sec2_BSDE} with $\varphi=\varphi_1$ and
$\varphi=\varphi_2$ respectively. Then we have
\begin{equation}\label{Sec2_A_Priori}
\begin{aligned}
& \mathbb E\int_0^\infty \Big[ (K+2\rho-2C_1^2 -2C_2^2 -\delta)
|y_1(t)-y_2(t)|^2\\
& +\frac 1 2 |z_1(t)-z_2(t)|^2 +\frac 1 2 \norm{r_1(t,\cdot)-r_2(t,\cdot)}^2 \Big] e^{Kt} dt\\
\leq\ & \frac 1 \delta \mathbb E\int_0^\infty
|\varphi_1(t)-\varphi_2(t)|^2 e^{Kt} dt,
\end{aligned}
\end{equation}
where $\delta>0$ is defined in \eqref{Sec2_delta}.
\end{lemma}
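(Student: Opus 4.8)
The plan is to apply It\^o's formula to the process $|y_1(s)-y_2(s)|^2 e^{Ks}$ on a finite interval $[0,T]$ and then let $T\to\infty$. Write $\hat y = y_1-y_2$, $\hat z = z_1-z_2$, $\hat r = r_1-r_2$ and $\hat\varphi = \varphi_1-\varphi_2$. Since $\hat y$ solves the backward SDE with generator $G(s,y_1,z_1,r_1)-G(s,y_2,z_2,r_2)+\hat\varphi(s)$ and zero terminal data at infinity, It\^o's formula yields
\[
\begin{aligned}
\mathbb E\big[|\hat y(T)|^2 e^{KT}\big] - \mathbb E\big[|\hat y(0)|^2\big]
=\ & \mathbb E\int_0^T \Big[ K|\hat y(s)|^2 - 2\big\langle \hat y(s),\, G(s,y_1,z_1,r_1)-G(s,y_2,z_2,r_2)+\hat\varphi(s)\big\rangle \\
& + |\hat z(s)|^2 + \norm{\hat r(s,\cdot)}^2 \Big] e^{Ks}\, ds.
\end{aligned}
\]
Here the $|\hat z|^2$ and $\norm{\hat r}^2$ terms come from the quadratic variation of the Brownian and Poisson stochastic integrals, whose expectations vanish. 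Rearranging and using $\mathbb E[|\hat y(T)|^2 e^{KT}]\geq 0$ and dropping $-\mathbb E|\hat y(0)|^2\le 0$, I would obtain an inequality bounding $\mathbb E\int_0^T(|\hat z|^2+\norm{\hat r}^2)e^{Ks}ds$ plus the monotone contribution from below.

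The core estimate is to control the cross term. I would split $G(s,y_1,z_1,r_1)-G(s,y_2,z_2,r_2)$ as $[G(s,y_1,z_1,r_1)-G(s,y_2,z_1,r_1)] + [G(s,y_2,z_1,r_1)-G(s,y_2,z_2,r_2)]$. The first bracket, paired with $\hat y$, is bounded above by $-\rho|\hat y|^2$ using the weak monotonicity (H2.3). The second bracket is estimated by the Lipschitz condition (H2.2): its norm is at most $C_1|\hat z| + C_2\norm{\hat r}$, so $2\langle \hat y,\,\cdot\rangle \le 2|\hat y|(C_1|\hat z|+C_2\norm{\hat r})$. Applying Young's inequality $2ab\le \eta a^2 + \eta^{-1}b^2$ with suitably chosen weights — for instance $2C_1|\hat y||\hat z| \le 4C_1^2|\hat y|^2 + \tfrac14|\hat z|^2$ and similarly for the $\hat r$ term, and $2|\hat y||\hat\varphi| \le \delta|\hat y|^2 + \delta^{-1}|\hat\varphi|^2$ — absorbs the $|\hat z|^2$ and $\norm{\hat r}^2$ pieces into the $\tfrac12|\hat z|^2,\tfrac12\norm{\hat r}^2$ left on the right-hand side, and collects a coefficient $K+2\rho - 2C_1^2 - 2C_2^2 - \delta$ in front of $|\hat y|^2$. (The factor-of-two discrepancy between $4C_1^2$ and $2C_1^2$ in the statement just reflects a particular choice of Young weights; I would tune the split so the stated constant emerges.)

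After these manipulations, for every $T>0$,
\[
\begin{aligned}
& \mathbb E\int_0^T \Big[ (K+2\rho-2C_1^2-2C_2^2-\delta)|\hat y(s)|^2 + \tfrac12|\hat z(s)|^2 + \tfrac12\norm{\hat r(s,\cdot)}^2 \Big] e^{Ks}\, ds \\
& \le \mathbb E\big[|\hat y(0)|^2\big] - \mathbb E\big[|\hat y(T)|^2 e^{KT}\big] + \tfrac1\delta \mathbb E\int_0^T |\hat\varphi(s)|^2 e^{Ks}\, ds.
\end{aligned}
\]
By Corollary \ref{Sec2_Corollary}(iii) applied to each solution (hence to their difference, after noting $\hat y$ solves a backward SDE whose generator still satisfies (H2.1)--(H2.2)), $\mathbb E[|\hat y(T)|^2 e^{KT}]\to 0$ as $T\to\infty$; also $\mathbb E[|\hat y(0)|^2]$ is finite and, more to the point, cancels against the $T=0$ lower limit — actually it is cleaner to run It\^o's formula on $[0,T]$ and observe the $\mathbb E|\hat y(0)|^2$ term appears with the correct sign, or alternatively to start from $t$ and let the left endpoint be arbitrary; in the end one lets $T\to\infty$ and uses monotone convergence on the left and dominated convergence on the right (all integrands are in $L^{2,K}$ by Corollary \ref{Sec2_Corollary} and the hypotheses on $\hat z,\hat r,\hat\varphi$). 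This yields \eqref{Sec2_A_Priori}. The main obstacle is purely bookkeeping: choosing the Young's-inequality weights so that the $|\hat z|^2$ and $\norm{\hat r}^2$ terms are absorbed leaving exactly the coefficients $\tfrac12$ and the advertised constant $K+2\rho-2C_1^2-2C_2^2-\delta$, and justifying the passage $T\to\infty$ rigorously using the decay from Corollary \ref{Sec2_Corollary} — there is no conceptual difficulty beyond that.
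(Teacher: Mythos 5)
Your proposal follows essentially the same route as the paper's proof: It\^o's formula applied to $|\hat y(t)|^2 e^{Kt}$ on $[0,T]$, the same two-term splitting of the generator difference handled by (H2.3) and (H2.2) respectively, Young's inequality tuned to leave $\tfrac12|\hat z|^2+\tfrac12\norm{\hat r}^2$ and the coefficient $K+2\rho-2C_1^2-2C_2^2-\delta$ (the correct weights being $2C_i|\hat y|\cdot(\cdot)\le 2C_i^2|\hat y|^2+\tfrac12(\cdot)^2$ and $2|\hat y||\hat\varphi|\le\delta|\hat y|^2+\delta^{-1}|\hat\varphi|^2$), and the passage $T\to\infty$ via Corollary \ref{Sec2_Corollary}. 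The only blemish is the bookkeeping you already flag: the boundary term $|\hat y(0)|^2$ actually lands on the favorable side (the paper keeps it on the left-hand side and simply drops it), so nothing in the argument is affected.
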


\begin{proof}
We denote
\[
\begin{aligned}
& \hat \varphi(t) := \varphi_1(t)-\varphi_2(t), \quad \hat y(t) :=
y_1(t)-y_2(t),\\
& \hat z(t) := z_1(t)-z_2(t), \quad \hat r(t,e) :=
r_1(t,e)-r_2(t,e),
\end{aligned}
\]
for any $(\omega,t,e)\in \Omega \times [0,\infty)\times\mathcal E$.
For any given $T>0$, we apply It\^{o}'s formula to $|\hat y(t)|^2
e^{Kt}$ on the interval $[0,T]$:
\[
\begin{aligned}
& \mathbb E\Big[ |\hat y(T)|^2 e^{KT} \Big] -|\hat y(0)|^2\\
=\ & \mathbb E\int_0^T \left[ K|\hat y(t)|^2 +|\hat z(t)|^2
+\norm{\hat r(t,\cdot)}^2 \right]
e^{Kt} dt\\
& -2 \mathbb E\int_0^T \Big\langle \hat y(t),\
G(t,y_1(t),z_1(t),r_1(t,\cdot)) -G(t,y_2(t),z_2(t),r_2(t,\cdot))
+\hat\varphi(t) \Big\rangle e^{Kt} dt.
\end{aligned}
\]
Then
\[
\begin{aligned}
& |\hat y(0)|^2 +\mathbb E\int_0^T \left[ K|\hat y(t)|^2 +|\hat
z(t)|^2 +\norm{\hat r(t,\cdot)}^2 \right] e^{Kt} dt\\
=\ & \mathbb E \Big[ |\hat y(T)|^2 e^{KT} \Big] +2\mathbb E\int_0^T
\left\langle \hat y(t),\ \hat\varphi(t) \right\rangle e^{Kt} dt\\
& +2\mathbb E\int_0^T \Big\langle \hat y(t),\
G(t,y_1(t),z_1(t),r_1(t,\cdot)) -G(t,y_2(t),z_1(t),r_1(t,\cdot))
\Big\rangle e^{Kt} dt\\
& +2\mathbb E\int_0^T \Big\langle \hat y(t),\
G(t,y_2(t),z_1(t),r_1(t,\cdot)) -G(t,y_2(t),z_2(t),r_2(t,\cdot))
\Big\rangle e^{Kt} dt.
\end{aligned}
\]
By Assumptions (H2.2) and (H2.3), we have
\[
\begin{aligned}
& |\hat y(0)|^2 +\mathbb E\int_0^T \left[ K|\hat y(t)|^2 +|\hat
z(t)|^2 +\norm{\hat r(t,\cdot)}^2 \right] e^{Kt} dt\\
\leq\ & \mathbb E \Big[ |\hat y(T)|^2 e^{KT} \Big] +\mathbb
E\int_0^T \Big[ 2|\hat y(t)| |\hat\varphi(t)| -2\rho |\hat y(t)|^2
+2|\hat y(t)|(C_1|\hat z(t)| +C_2\norm{\hat r(t,\cdot)}) \Big]
e^{Kt} dt\\
\leq\ & \mathbb E \Big[ |\hat y(T)|^2 e^{KT} \Big] +\mathbb
E\int_0^T  \Big[ (\delta-2\rho+2C_1^2+2C_2^2) |\hat y(t)|^2 +\frac 1
2 |\hat z(t)|^2 +\frac 1 2 \norm{\hat r(t,\cdot)}^2
+\frac{1}{\delta} |\hat\varphi(t)|^2 \Big] e^{Kt} dt.
\end{aligned}
\]
Therefore,
\[
\begin{aligned}
& |\hat y(0)|^2 +\mathbb E\int_0^T
\Big[(K+2\rho-2C_1^2-2C_2^2-\delta) |\hat y(t)|^2 +\frac 1 2 |\hat
z(t)|^2 +\frac 1 2 \norm{\hat r(t,\cdot)}^2\Big] e^{Kt} dt\\
\leq\ & \mathbb E \Big[ |\hat y(T)|^2 e^{KT} \Big] +\frac{1}{\delta}
\mathbb E\int_0^T |\hat\varphi(t)|^2 e^{Kt} dt.
\end{aligned}
\]
Let $T\rightarrow\infty$. Thanks to Corollary \ref{Sec2_Corollary},
we have
\[
\begin{aligned}
& |\hat y(0)|^2 +\mathbb E\int_0^\infty
\Big[(K+2\rho-2C_1^2-2C_2^2-\delta) |\hat y(t)|^2 +\frac 1 2 |\hat
z(t)|^2 +\frac 1 2 \norm{\hat r(t,\cdot)}^2\Big] e^{Kt} dt\\
\leq\ & \frac{1}{\delta} \mathbb E\int_0^\infty |\hat\varphi(t)|^2
e^{Kt} dt,
\end{aligned}
\]
which implies \eqref{Sec2_A_Priori}.
\end{proof}

\begin{theorem}\label{Sec2_THM}
Let Assumptions (H2.1)-(H2.4) hold. When $K>0$, the backward SDE
\eqref{Sec2_BSDE} admits a unique solution $(y,z,r)\in
L^{2,K}_{\mathbb F}(0,\infty;\mathbb R^m) \times L^{2,K}_{\mathbb
F}(0,\infty;\mathbb R^{m\times d}) \times M^{2,K}_{\mathbb
F}(0,\infty;\mathbb R^{m\times l})$.
\end{theorem}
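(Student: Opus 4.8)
\emph{Overview and uniqueness.} I would dispose of uniqueness first and then obtain existence by the method of continuation along a parameter multiplying the nonlinearity $G$. Uniqueness is immediate from Lemma \ref{Sec2_Lemma}: if $(y,z,r)$ and $(y',z',r')$ are two solutions in $L^{2,K}_{\mathbb F}(0,\infty;\mathbb R^m)\times L^{2,K}_{\mathbb F}(0,\infty;\mathbb R^{m\times d})\times M^{2,K}_{\mathbb F}(0,\infty;\mathbb R^{m\times l})$ corresponding to the same $\varphi$, then applying Lemma \ref{Sec2_Lemma} with $\varphi_1=\varphi_2=\varphi$ makes the right-hand side of \eqref{Sec2_A_Priori} vanish, while its left-hand side has strictly positive coefficients by \eqref{Sec2_delta}; hence $y=y'$, $z=z'$, $r=r'$.

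\emph{The base case $G\equiv 0$.} This is where the hypothesis $K>0$ really enters. For any $\psi\in L^{2,K}_{\mathbb F}(0,\infty;\mathbb R^m)$ I claim the backward SDE $-dy(t)=\psi(t)\,dt-z(t)\,dW(t)-\int_{\mathcal E}r(t,e)\,\tilde N(dt,de)$ has a unique solution in the weighted spaces. Since $K>0$, Cauchy--Schwarz gives $\mathbb E(\int_0^\infty|\psi(s)|\,ds)^2\le\frac1K\norm{\psi}_{L^{2,K}_{\mathbb F}}^2<\infty$, so $\xi:=\int_0^\infty\psi(s)\,ds\in L^2(\Omega,\mathcal F_\infty,\mathbb P;\mathbb R^m)$; applying the martingale representation theorem for the filtration $\mathbb F$ to the martingale $\mathbb E[\xi\mid\mathcal F_t]$ produces $(z,r)\in L^2_{\mathbb F}(0,\infty;\mathbb R^{m\times d})\times M^2_{\mathbb F}(0,\infty;\mathbb R^{m\times l})$, and $y(t):=\mathbb E[\int_t^\infty\psi(s)\,ds\mid\mathcal F_t]$ then solves the equation. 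Cauchy--Schwarz also gives $\mathbb E[|y(t)|^2e^{Kt}]\le\frac1K\mathbb E\int_t^\infty|\psi(s)|^2e^{Ks}\,ds\to 0$ as $t\to\infty$; feeding this into It\^o's formula applied to $|y(t)|^2e^{Kt}$ on $[0,T]$ and letting $T\to\infty$ shows $y\in L^{2,K}_{\mathbb F}$, $z\in L^{2,K}_{\mathbb F}$, $r\in M^{2,K}_{\mathbb F}$ together with an estimate $\norm{y}_{L^{2,K}_{\mathbb F}}^2+\norm{z}_{L^{2,K}_{\mathbb F}}^2+\norm{r}_{M^{2,K}_{\mathbb F}}^2\le C_K\norm{\psi}_{L^{2,K}_{\mathbb F}}^2$ with $C_K$ depending only on $K$; uniqueness here is the first paragraph with $\rho=C_1=C_2=0$.

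\emph{Continuation.} For $\alpha\in[0,1]$ put $G_\alpha:=\alpha G$. Each $G_\alpha$ satisfies (H2.1)--(H2.2) with constants no larger than those of $G$ and (H2.3) with $\rho_\alpha=\alpha\rho$; moreover $K+2\rho_\alpha-2(\alpha C_1)^2-2(\alpha C_2)^2=K+2\rho\alpha-2(C_1^2+C_2^2)\alpha^2$, as a function of $\alpha$ on $[0,1]$, is affine or a concave parabola, hence bounded below by $c_0:=\min\{K,\,K+2\rho-2C_1^2-2C_2^2\}>0$, the positivity of the two terms using $K>0$ and (H2.4) respectively. Fixing $\delta^\ast\in(0,c_0)$ once and for all, (H2.4)--(H2.4') hold for every $G_\alpha$ with this same $\delta^\ast$, so Lemma \ref{Sec2_Lemma} applies to each $G_\alpha$ with the constant in \eqref{Sec2_A_Priori} independent of $\alpha$. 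Let $\mathcal S(\alpha)$ be the statement that for every $\phi\in L^{2,K}_{\mathbb F}(0,\infty;\mathbb R^m)$ the backward SDE with coefficient $G_\alpha(t,y,z,r)+\phi(t)$ is uniquely solvable in the weighted spaces; $\mathcal S(0)$ holds by the base case. Assuming $\mathcal S(\alpha_0)$ for some $\alpha_0\in[0,1)$, and given $\phi$ and $\beta>0$ with $\alpha_0+\beta\le 1$, I would define a map $\Gamma$ on the complete Hilbert space $H:=L^{2,K}_{\mathbb F}(0,\infty;\mathbb R^m)\times L^{2,K}_{\mathbb F}(0,\infty;\mathbb R^{m\times d})\times M^{2,K}_{\mathbb F}(0,\infty;\mathbb R^{m\times l})$ by letting $\Gamma(u,v,w)$ be the unique solution (well defined by $\mathcal S(\alpha_0)$, the free term lying in $L^{2,K}_{\mathbb F}$ by (H2.2)) of the backward SDE with coefficient $G_{\alpha_0}(t,y,z,r)+\beta G(t,u(t),v(t),w(t))+\phi(t)$. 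Applying Lemma \ref{Sec2_Lemma} with $G$ replaced by $G_{\alpha_0}$ to two images and using the Lipschitz bound $\norm{G(\cdot,u_1,v_1,w_1)-G(\cdot,u_2,v_2,w_2)}_{L^{2,K}_{\mathbb F}}\le\sqrt{C_0^2+C_1^2+C_2^2}\,\norm{(u_1-u_2,v_1-v_2,w_1-w_2)}_H$ shows $\Gamma$ is Lipschitz on $H$ with constant a fixed multiple of $\beta$; hence there is a fixed $\delta_0>0$, independent of $\alpha_0$, such that $\Gamma$ is a contraction for $\beta\in(0,\delta_0]$, and its unique fixed point solves the backward SDE with coefficient $G_{\alpha_0+\beta}(t,y,z,r)+\phi(t)$, uniqueness being Lemma \ref{Sec2_Lemma} once more. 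Thus $\mathcal S(\alpha_0)\Rightarrow\mathcal S(\alpha_0+\beta)$ for all $\beta\in[0,\delta_0]$; iterating from $\mathcal S(0)$ in steps of length $\delta_0$ yields $\mathcal S(1)$, and $\mathcal S(1)$ with $\phi=\varphi$ is exactly the assertion of the theorem.

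\emph{Main obstacle.} The delicate step is the base case: one has to check that the triple produced by martingale representation lies in the \emph{weighted} spaces $L^{2,K}_{\mathbb F}$ and $M^{2,K}_{\mathbb F}$, not merely in the unweighted ones, and this is precisely where $K>0$ is used --- both to make $\int_0^\infty\psi(s)\,ds$ square integrable and to guarantee the decay $\mathbb E[|y(t)|^2e^{Kt}]\to 0$ that legitimises passing to the limit in It\^o's formula. The only other point requiring care is that the step length $\delta_0$ in the continuation must not depend on $\alpha_0$, which is ensured by the uniformity in $\alpha$ of the constant $\delta^\ast$, and hence of the constant appearing in \eqref{Sec2_A_Priori}, over the family $\{G_\alpha\}_{\alpha\in[0,1]}$.
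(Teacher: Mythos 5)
Your proof is correct, but it takes a genuinely different route from the paper's. The paper keeps the full nonlinearity $G$ throughout and instead approximates the free term: it sets $\varphi_n=\mathbbm{1}_{[0,n]}\varphi$, solves the finite-horizon backward SDE on $[0,n]$ with zero terminal value by the classical theory, extends the solution by $(0,0,0)$ on $(n,\infty)$ (which is where $G(t,0,0,0)=0$ is used), observes that the extended triples solve the infinite-horizon equation with data $\varphi_n$, and then invokes Lemma \ref{Sec2_Lemma} to see that the sequence is Cauchy in the weighted spaces; the hypothesis $K>0$ enters only at the end, to dominate the $L^1(dt)$-norms of the drift terms by weighted $L^2(e^{Kt}dt)$-norms when passing to the limit in the equation. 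You instead run a continuation in the parameter $\alpha$ multiplying $G$, with a base case $G\equiv 0$ handled by martingale representation for the Brownian--Poisson filtration on $[0,\infty)$; for you, $K>0$ enters earlier and twice, to make $\int_0^\infty\psi(s)\,ds$ square-integrable and to force the decay $\mathbb E[|y(t)|^2e^{Kt}]\to 0$ that legitimises the weighted It\^o estimate. Your observation that $\alpha\mapsto K+2\alpha\rho-2\alpha^2(C_1^2+C_2^2)$ is concave, hence bounded below on $[0,1]$ by $\min\{K,\ K+2\rho-2C_1^2-2C_2^2\}>0$, is exactly what makes the step length $\delta_0$ uniform in $\alpha_0$, and is a point worth making explicit. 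What the paper's approach buys is that it outsources all hard existence work to the finite-horizon theory and needs only one application of the a priori estimate; what yours buys is self-containedness modulo martingale representation, an explicit quantitative bound for the base case, and a template that mirrors the continuation method used for the coupled forward-backward equations in Section \ref{Sec3}. Both arguments lean on Lemma \ref{Sec2_Lemma} as the sole quantitative input, and both obtain uniqueness from it in the same way.
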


\begin{proof}
Clearly, the a priori estimate \eqref{Sec2_A_Priori} implies the
uniqueness. For the existence, we employ the method used in
\cite{PengShi2000} to construct an adapted solution. In detail, for
$n=1,2,\dots$, we define
\[
\varphi_n(t) = \mathbbm{1}_{[0,n]}(t) \varphi(t),\quad t\in
[0,\infty).
\]
Obviously, the sequence $\{\varphi_n\}_{n=1}^{\infty}$ converges to
$\varphi$ in $L^{2,K}_{\mathbb F}(0,\infty;\mathbb R^m)$. For each
$n$, let $(\bar y_n,\bar z_n,\bar r_n)$ be the unique adapted
solution of the following finite horizon backward SDE:
\[
\begin{aligned}
\bar y_n(t) =\ & \int_t^n \Big[ G(s,\bar y_n(s),\bar z_n(s),\bar
r_n(s,\cdot)) +\varphi_n(s) \Big] ds -\int_t^n \bar z_n(s) dW(s)\\
& -\int_t^n \int_{\mathcal E} \bar r_n(s,e) \tilde N(ds,de),\quad
t\in [0,n].
\end{aligned}
\]
Furthermore, we define
\[
(y_n(t),z_n(t),r_n(t,\cdot)) := \left\{
\begin{aligned}
& (\bar y_n(t), \bar z_n(t), \bar r_n(t,\cdot)), \quad && t\in [0,n],\\
& (0,0,0),\quad && t\in (n,\infty).
\end{aligned}
\right.
\]
Obviously, $(y_n,z_n,r_n)\in L^{2,K}_{\mathbb F}(0,\infty;\mathbb
R^m) \times L^{2,K}_{\mathbb F}(0,\infty;\mathbb R^{m\times d})
\times M^{2,K}_{\mathbb F}(0,\infty;\mathbb R^{m\times l})$. Since
$G(s,0,0,0)=0$ (see Assumption (H2.1)), then $(y_n,z_n,r_n)$ solves
the following infinite horizon backward SDE:
\[
\begin{aligned}
y_n(t) =\ & \int_t^\infty \Big[ G(s, y_n(s), z_n(s),
r_n(s,\cdot)) +\varphi_n(s) \Big] ds -\int_t^\infty z_n(s) dW(s)\\
& -\int_t^\infty \int_{\mathcal E} r_n(s,e) \tilde N(ds,de),\quad
t\in [0,\infty).
\end{aligned}
\]
Lemma \ref{Sec2_Lemma} implies that $\{(y_n,z_n,r_n)\}_{n=1}^\infty$
is a Cauchy sequence in $L^{2,K}_{\mathbb F}(0,\infty;\mathbb R^m)
\times L^{2,K}_{\mathbb F}(0,\infty;\mathbb R^{m\times d}) \times
M^{2,K}_{\mathbb F}(0,\infty;\mathbb R^{m\times l})$. We denote by
$(y,z,r)$ the limit of $\{(y_n,z_n,r_n)\}_{n=1}^\infty$, and shall
show that $(y,z,r)$ solves the backward SDE \eqref{Sec2_BSDE}.

First, when $K>0$ we deduce that
\[
\begin{aligned}
\mathbb E\left[\int_t^\infty (z_n(s)-z(s)) dW(s)\right]^2 =\ &
\mathbb E\int_t^\infty |z_n(s)-z(s)|^2 ds\\
\leq\ & \mathbb E \int_0^\infty |z_n(s)-z(s)|^2 e^{Ks} ds
\rightarrow 0,\quad \mbox{as } n\rightarrow\infty,
\end{aligned}
\]
i.e. the item $\int_t^\infty z_n(s) dW(s)$ converges to
$\int_t^\infty z(s) dW(s)$ in $L^2(\Omega,\mathcal F,\mathbb
P;\mathbb R^m)$ which is the space of $\mathcal F$-measurable square
integrable random variables. The same argument also leads to a
similar conclusion: the item $\int_t^\infty \int_{\mathcal E}
r_n(s,e) \tilde N(ds, de)$ converges to $\int_t^\infty\int_{\mathcal
E} r(s,e) \tilde N(ds, de)$ in $L^2(\Omega,\mathcal F,\mathbb
P;\mathbb R^m)$. Second, for any $K>0$, we have
\[
\begin{aligned}
& \mathbb E\left[ \int_t^\infty \Big(
G(s,y_n(s),z_n(s),r_n(s,\cdot)) -G(s,y(s),z(s),r(s,\cdot)) \Big) ds
\right]^2\\
\leq\ & \mathbb E\left[ \int_0^\infty \Big|
G(s,y_n(s),z_n(s),r_n(s,\cdot)) -G(s,y(s),z(s),r(s,\cdot)) \Big|
e^{\frac K 2 s} e^{-\frac K 2 s} ds \right]^2\\
\leq\ & \mathbb E\left[ \left( \int_0^\infty \Big|
G(s,y_n(s),z_n(s),r_n(s,\cdot)) -G(s,y(s),z(s),r(s,\cdot)) \Big|^2
e^{Ks} ds \right) \left(\int_0^\infty e^{-Ks} ds \right) \right]\\
\leq\ & C \mathbb E\int_0^\infty \Big[ |y_n(s)-y(s)|^2
+|z_n(s)-z(s)|^2 +\norm{r_n(s,\cdot)-r(s,\cdot)}^2 \Big] e^{Ks} ds\\
& \rightarrow 0,\quad \mbox{as } n\rightarrow\infty,
\end{aligned}
\]
i.e. the item $\int_t^\infty G(s,y_n(s),z_n(s),r_n(s,\cdot)) ds$
converges to $\int_t^\infty G(s,y(s),z(s),r(s,\cdot)) ds$ in
$L^2(\Omega,\mathcal F,\mathbb P;\mathbb R^m)$. We notice that, here
in order to dominate the $L^1$-norm by the $L^2$-norm, we have to
restrict $K>0$. This is different from the case of finite time
intervals. The same argument also leads to $\int_t^\infty
\varphi_n(s) ds$ converges to $\int_t^\infty \varphi(s) ds$ in
$L^2(\Omega,\mathcal F,\mathbb P;\mathbb R^m)$. At last, since
$\lim_{n\rightarrow\infty} \mathbb E \int_0^\infty |y_n(t)-y(t)|^2
e^{Kt} dt = 0$, there exists a subsequence of $\{y_n\}$ such that
\[
\lim_{n\rightarrow\infty} \mathbb E\Big[ |y_n(t)-y(t)|^2 \Big] = 0,
\quad \mbox{for almost everywhere } t \in [0,\infty).
\]
The proof is completed.
\end{proof}

\section{Coupled forward-backward SDEs}\label{Sec3}

In this section, we study the following kind of coupled
forward-backward SDEs driven by both Brownian motions and Poisson
processes on the infinite interval $[0,\infty)$:
\begin{equation}\label{Sec3_FBSDE}
\left\{
\begin{aligned}
dx(t) =\ & b(t,x(t),y(t),z(t),r(t,\cdot)) dt
+\sigma(t,x(t),y(t),z(t),r(t,\cdot)) dW(t)\\
& +\int_{\mathcal E} \gamma(t,e,x(t-),y(t-),z(t),r(t,e)) \tilde
N(dt,de),\\
-dy(t) =\ & g(t,x(t),y(t),z(t),r(t,\cdot)) dt -z(t) dW(t)
-\int_{\mathcal E} r(t,e) \tilde N(dt, de),\\
x(0) =\ & \Phi(y(0)),
\end{aligned}
\right.
\end{equation}
where $\Phi: \mathbb R^n\rightarrow \mathbb R^n$, $(b,\sigma,g):
\Omega\times [0,\infty)\times \mathcal R \rightarrow \mathbb R^n
\times \mathbb R^{n\times d} \times \mathbb R^n$ and $\gamma:
\Omega\times [0,\infty)\times\mathcal E\times \mathcal R \rightarrow
\mathbb R^{n\times l}$. Similar to Hu and Peng \cite{HuPeng1995},
for any $\theta = (x,y,z,r(\cdot)) \in\mathcal R$, we use the
notation $A(t,\theta) := (-g(t,\theta), b(t,\theta),
\sigma(t,\theta), \gamma(t,\cdot,\theta))$. Now we give the
following assumptions:
\begin{enumerate}[(\mbox{H3}.1)]
\item For any $\theta\in \mathcal R$, $b(\cdot,\theta)$, $\sigma(\cdot,\theta)$, $g(\cdot,\theta)$ are
$\mathbb F$-progressively measurable and
$\gamma(\cdot,\cdot,\theta)$ is $\mathcal P\otimes \mathcal
B(\mathcal E)$-measurable. Moreover, there exists a constant $K>0$
such that $A(\cdot,0)\in \mathcal L^{2,K}_{\mathbb F}(0,\infty)$.
\item $A$ and $\Phi$ are Lipschitz continuous with respect to $\theta$ and $y$ respectively, i.e.
there exists a constant $C>0$ such that for any $t\in [0,\infty)$,
any $\theta_1$, $\theta_2 \in \mathcal R$, any $y_1$, $y_2\in\mathbb
R^n$,
\[
\begin{aligned}
\mbox{(i)} & \quad |A(t,\theta_1)-A(t,\theta_2)| \leq C
|\theta_1-\theta_2|,\\
\mbox{(ii)} & \quad |\Phi(y_1)-\Phi(y_2)| \leq C |y_1-y_2|.
\end{aligned}
\]
\item $A$ and $\Phi$ satisfy the monotonicity conditions in the sense: there exists a constant $\mu>0$ such that for any $t\in
[0,\infty)$, any $\theta_1$, $\theta_2\in\mathcal R$, any $y_1$,
$y_2\in \mathbb R^n$,
\[
\begin{aligned}
\mbox{(i)} & \quad \langle A(t,\theta_1)-A(t,\theta_2),\
\theta_1-\theta_2 \rangle \leq -\mu |\theta_1-\theta_2|^2,\\
\mbox{(ii)} & \quad \langle \Phi(y_1)-\Phi(y_2),\ y_1-y_2 \rangle
\leq 0.
\end{aligned}
\]
\item $2\mu -K >0$.
\end{enumerate}

\begin{remark}\label{Sec3_Remark_Artificial}
Assumption (H3.4) is artificial. In fact, if it does not hold true,
then we can find a $\bar K\in (0,K)$ such that $2\mu-\bar K >0$. Due
to the decreasing property of $\{ L^{2,K}_{\mathbb
F}(0,\infty;\mathbb R^n) \}_{K\in\mathbb R}$ and $\{
M^{2,K}_{\mathbb F}(0,\infty;\mathbb R^{n\times l}) \}_{K\in\mathbb
R}$, Assumption (H3.1) implies $A(\cdot,0)$ also belongs to
$\mathcal L^{2,\bar K}_{\mathbb F}(0,\infty)$. So we can deal with
the corresponding problems in a larger space. However, for
convenience, we would like to keep (H3.4) in this paper.
\end{remark}

Next we employ the method of continuation originally introduced by
Hu and Peng \cite{HuPeng1995} to obtain the existence and uniqueness
of the forward-backward SDE \eqref{Sec3_FBSDE}. For this purpose, we
introduce a family of infinite horizon forward-backward SDEs
parametrized by $\alpha\in [0,1]$:
\begin{equation}\label{Sec3_FBSDE_Param}
\left\{
\begin{aligned}
dx^\alpha(t) =\ & \left[\alpha b(t,\theta^\alpha(t))
-\mu(1-\alpha)y^\alpha(t) +\phi(t)\right]dt\\
& +\left[ \alpha\sigma(t,\theta^\alpha(t)) -\mu(1-\alpha)z^\alpha(t)
+\psi(t) \right] dW(t)\\
& +\int_{\mathcal E} \left[\alpha \gamma(t,e,\theta^\alpha(t-))
-\mu(1-\alpha)r^\alpha(t,e)+\xi(t,e)\right] \tilde N(dt, de),\\
-dy^\alpha(t) =\ & \left[ \alpha g(t,\theta^\alpha(t))
+\mu(1-\alpha) x^\alpha(t) +\eta(t) \right] dt -z^\alpha(t) dW(t)\\
& -\int_{\mathcal E} r^\alpha(t,e) \tilde N(dt, de),\\
x^\alpha(0) =\ & \Phi(y^\alpha(0)),
\end{aligned}
\right.
\end{equation}
where $(\eta,\phi,\psi,\xi) \in \mathcal L^{2,K}_{\mathbb
F}(0,\infty)$ and we denote $\theta^\alpha(t) :=
(x^\alpha(t),y^\alpha(t),z^\alpha(t),r^\alpha(t,\cdot))$,
$\theta^\alpha(t-) :=
(x^\alpha(t-),y^\alpha(t-),z^\alpha(t),r^\alpha(t,e))$. We notice
that, the coefficient $\Phi$ is not parameterized as the same as
other coefficients $(b,\sigma,\gamma,g)$. This is a difference from
the traditional parameterization technique used in
\cite{HuPeng1995,PengShi2000,PengWu1999,Yong1997,Yong2010}.

When $\alpha =0$, the forward-backward SDE \eqref{Sec3_FBSDE_Param}
is reduced to
\begin{equation}\label{Sec3_FBSDE_Param_0}
\left\{
\begin{aligned}
dx^0(t) =\ & \left[ -\mu y^0(t) +\phi(t) \right] dt +\left[ -\mu
z^0(t) +\psi(t) \right] dW(t)\\
& +\int_{\mathcal E} \left[ -\mu r^0(t,e) +\xi(t,e) \right] \tilde
N(dt, de),\\
-dy^0(t) =\ & \left[ \mu x^0(t) +\eta(t) \right]dt -z^0(t) dW(t)
-\int_{\mathcal E} r^0(t,e) \tilde N(dt, de),\\
x^0(0) =\ & \Phi(y^0(0)).
\end{aligned}
\right.
\end{equation}
Before proving the unique solvability result for
\eqref{Sec3_FBSDE_Param_0}, we need to consider an algebraic
equation related to the coupling of initial conditions. The
technique dealing with the algebraic equation is similar to the
proof of Lemma 3.4 in \cite{WuYu2014}.

\begin{lemma}\label{Sec3_Lemma_AE}
Let Assumptions (H3.1)-(H3.4) hold. For any $p\in\mathbb R^n$, the
following algebraic equation
\begin{equation}\label{Sec3_AE}
x = \Phi(x+p)
\end{equation}
admits a unique solution $x\in\mathbb R^n$.
\end{lemma}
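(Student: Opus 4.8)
The plan is to rephrase the algebraic equation $x = \Phi(x+p)$ as a fixed-point problem and to solve it using the monotonicity condition (H3.3)-(ii) together with the Lipschitz condition (H3.2)-(ii). Set $f(x) := x - \Phi(x+p)$; we want to show $f$ has a unique zero on $\mathbb R^n$. Uniqueness is immediate: if $x_1$ and $x_2$ are two solutions, then $x_1 - x_2 = \Phi(x_1+p) - \Phi(x_2+p)$, and pairing both sides with $x_1 - x_2$ and using (H3.3)-(ii) (with $y_i = x_i + p$, so $y_1 - y_2 = x_1 - x_2$) gives $|x_1-x_2|^2 = \langle \Phi(x_1+p)-\Phi(x_2+p),\ x_1-x_2\rangle \le 0$, forcing $x_1 = x_2$.

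For existence, I would introduce the regularized map $\Phi_\varepsilon(y) := \Phi(y) - \varepsilon y$ for $\varepsilon > 0$. By (H3.3)-(ii), $\Phi_\varepsilon$ is \emph{strictly} dissipative: $\langle \Phi_\varepsilon(y_1) - \Phi_\varepsilon(y_2),\ y_1 - y_2\rangle \le -\varepsilon|y_1-y_2|^2$. Consider the auxiliary equation $x = \Phi_\varepsilon(x+p)$, i.e. the map $T_\varepsilon(x) := \Phi_\varepsilon(x+p) = \Phi(x+p) - \varepsilon(x+p)$. One checks that $T_\varepsilon$ need not itself be a contraction, so instead I would appeal to the classical fact that a continuous strictly dissipative map on $\mathbb R^n$ is onto (equivalently, $I - T_\varepsilon$ has a zero): the vector field $x \mapsto x - T_\varepsilon(x) = x - \Phi(x+p) + \varepsilon(x+p)$ satisfies, via the strict dissipativity of $\Phi_\varepsilon$, the coercivity estimate $\langle x - T_\varepsilon(x),\ x\rangle \ge \varepsilon|x+p|^2 - \langle \Phi_\varepsilon(p),\ x\rangle \to +\infty$ as $|x|\to\infty$ (after absorbing the cross term, using that $|\Phi_\varepsilon(p)|$ is a fixed constant). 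Hence by a standard degree-theoretic / Brouwer argument there is $x_\varepsilon \in \mathbb R^n$ with $x_\varepsilon = \Phi(x_\varepsilon + p) - \varepsilon(x_\varepsilon+p)$.

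It remains to pass to the limit $\varepsilon \downarrow 0$. First I would derive a uniform bound: pairing $x_\varepsilon - \Phi(x_\varepsilon+p) = -\varepsilon(x_\varepsilon+p)$ with $x_\varepsilon - x_0$, where $x_0$ is chosen to satisfy $x_0 = \Phi(x_0 + p_0)$ for some fixed reference... actually more simply, use the Lipschitz bound $|\Phi(x_\varepsilon+p)| \le |\Phi(0)| + C|x_\varepsilon + p|$ in $x_\varepsilon = \Phi(x_\varepsilon+p) - \varepsilon(x_\varepsilon + p)$ together with the dissipativity-based estimate $\langle \Phi(x_\varepsilon + p) - \Phi(p),\ x_\varepsilon\rangle \le 0$ to show $\{x_\varepsilon\}_{0<\varepsilon\le 1}$ is bounded independently of $\varepsilon$. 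Extract a convergent subsequence $x_{\varepsilon_k} \to \bar x$; since $\Phi$ is continuous (Lipschitz) and $\varepsilon_k(x_{\varepsilon_k}+p) \to 0$, passing to the limit in the auxiliary equation gives $\bar x = \Phi(\bar x + p)$, which is the desired solution. The main obstacle is the existence half: because (H3.3)-(ii) only gives $\langle\,\cdot\,\rangle \le 0$ rather than strict dissipativity, no contraction-mapping argument applies directly, and one genuinely needs the $\varepsilon$-regularization plus a topological (Brouwer fixed point / invariance of domain) argument to get surjectivity — together with the uniform a priori bound needed to pass to the limit. I expect the authors (following Lemma 3.4 of \cite{WuYu2014}) instead use the mean value theorem / intermediate value theorem in a more hands-on way, but the $\varepsilon$-perturbation route above is the cleanest self-contained version.
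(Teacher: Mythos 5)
Your proof is correct, and the existence half takes a genuinely different route from the paper. The uniqueness argument is identical. For existence, the paper derives exactly the coercivity inequality you obtain, namely $\langle \lambda(x,p),\ x\rangle \leq -\tfrac12|x|^2+\tfrac12|\Phi(p)|^2$ with $\lambda(x,p)=\Phi(x+p)-x$ (see \eqref{Sec3_Temp1_Lemma_AE}), but then concludes only in the scalar case $n=1$ via the intermediate value theorem (dividing by $x$ and sending $x\to\pm\infty$), deferring $n>1$ to the appendix of \cite{WuYu2014}. Your route --- regularize to $\Phi_\varepsilon=\Phi-\varepsilon\,\mathrm{id}$, get a zero of the coercive field $x\mapsto x-\Phi_\varepsilon(x+p)$ by the standard Brouwer/acute-angle lemma, establish the uniform bound $|x_\varepsilon|\leq|\Phi(p)|+|p|$, and pass to the limit --- is valid in every dimension and self-contained modulo that one standard topological fact, which is a real gain over the paper's presentation. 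Two small remarks. First, the $\varepsilon$-regularization is actually superfluous: the unregularized field $x\mapsto x-\Phi(x+p)$ is already coercive with constant $\tfrac12$ by \eqref{Sec3_Temp1_Lemma_AE}, so the Brouwer step applies directly at $\varepsilon=0$ and the entire limit passage can be deleted; the regularization buys you strict dissipativity, but strictness is never what the surjectivity lemma needs --- coercivity is. Second, in your coercivity display the term should be $\varepsilon|x|^2$ rather than $\varepsilon|x+p|^2$ (the monotonicity of $\Phi_\varepsilon$ is applied at the points $x+p$ and $p$, whose difference is $x$); this is harmless for the conclusion. Note also that the paper's Remark 3.5 dismisses the ``monotone operator with coerciveness'' approach only because $\Phi$ itself is not coercive; your observation that the \emph{shifted} map is coercive is precisely what rescues that style of argument.
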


\begin{proof}
Firstly, we show the uniqueness. If both $x_1$ and $x_2$ satisfy the
algebraic equation \eqref{Sec3_AE}, then
\[
x_1-x_2 = \Phi(x_1+p) -\Phi(x_2+p).
\]
Making inner product with $x_1-x_2$, by the monotonicity condition
on $\Phi$ (see (H3.3)), we get
\[
|x_1-x_2|^2 = \langle \Phi(x_1+p) -\Phi(x_2+p),\ x_1-x_2 \rangle
\leq 0.
\]
We proved the uniqueness.

Secondly, we prove the existence. We define a new function
\begin{equation}
\lambda(x,p) = \Phi(x+p)-x,\quad (x,p)\in\mathbb R^n\times\mathbb
R^n.
\end{equation}
Making inner product of $\lambda$ and $x$, by the monotonicity
condition of $\Phi$, we get
\[
\begin{aligned}
\langle \lambda(x,p),\ x \rangle =\ & \langle \Phi(x+p),\ x \rangle
-|x|^2\\
=\ & \langle \Phi(x+p)-\Phi(p),\ x \rangle +\langle \Phi(p),\ x
\rangle -|x|^2\\
\leq\ & -|x|^2 +\langle \Phi(p),\ x \rangle.
\end{aligned}
\]
From the inequality: $\langle a,\ b \rangle \leq (1/2)
(|a|^2+|b|^2)$, we have
\begin{equation}\label{Sec3_Temp1_Lemma_AE}
\langle \lambda(x,p),\ x \rangle \leq -\frac 1 2 |x|^2 +\frac 1 2
|\Phi(p)|^2.
\end{equation}
We assert that the above inequality implies that, for any
$p\in\mathbb R^n$, there exists an $x(p)$ such that $\lambda(x(p),p)
=0$. This is equivalent to the existence of the algebraic equation
\eqref{Sec3_AE}. In order to highlight the idea of the proof, here
we only prove this conclusion in a simple case where $n=1$. For the
general case $n>1$, the proof is a bit complicated and technical, we
would like to omit it. The interested readers can be referred to
Appendix in \cite{WuYu2014}. When $n=1$, the inequality
\eqref{Sec3_Temp1_Lemma_AE} is rewritten as
\begin{equation}\label{Sec3_Temp2_Lemma_AE}
\lambda(x,p) x \leq -\frac 1 2 x^2 +\frac{\Phi^2(p)}{2}.
\end{equation}
\begin{enumerate}[(i)]
\item When $x<0$, dividing $x$ on both sides of the inequality
\eqref{Sec3_Temp2_Lemma_AE}, we have
\[
\lambda(x,p) \geq -\frac 1 2 x +\frac{\Phi^2(p)}{2x}.
\]
Letting $x\rightarrow -\infty$, we have $\lambda(x,p)\rightarrow
+\infty$.
\item When $x>0$, dividing $x$ on both sides of the inequality
\eqref{Sec3_Temp2_Lemma_AE}, we have
\[
\lambda(x,p) \leq -\frac 1 2 x +\frac{\Phi^2(p)}{2x}.
\]
Letting $x\rightarrow +\infty$, we have $\lambda(x,p)\rightarrow
-\infty$.
\end{enumerate}
Obviously $\lambda$ is a continuous function. From the classical
mean value theorem of continuous functions, we know that, for any
$p$, there exists a real number $x(p)$ such that $\lambda(x(p),p)
=0$. We finish the proof of existence.
\end{proof}

\begin{remark}
One conventional approach to prove the solvability of some algebraic
equations is by virtue of a monotone operator with coerciveness (see
for example Zeidler \cite[Theorem 26.A]{Zeidler1990}). Due to
Assumption (H3.3), $\Phi$ is a monotone operator. However, there is
no coercive condition imposed. So this conventional method cannot be
applied for our problem.
\end{remark}

\begin{lemma}\label{Sec3_Lemma_0}
Let Assumptions (H3.1)-(H3.4) hold. For any $(\eta,\phi,\psi,\xi)\in
\mathcal L^{2,K}_{\mathbb F}(0,\infty)$, the forward-backward SDE
\eqref{Sec3_FBSDE_Param_0} admits a unique solution in $\mathcal
L^{2,K}_{\mathbb F}(0,\infty)$.
\end{lemma}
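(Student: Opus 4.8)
The plan is to solve the decoupled system \eqref{Sec3_FBSDE_Param_0} by first solving the backward component, then determining the correct initial datum for the forward component via the algebraic equation of Lemma~\ref{Sec3_Lemma_AE}, and finally solving the forward SDE. First I would observe that the backward equation for $(y^0,z^0,r^0)$ is
\[
-dy^0(t) = \big[\mu x^0(t)+\eta(t)\big]\,dt - z^0(t)\,dW(t) - \int_{\mathcal E} r^0(t,e)\,\tilde N(dt,de),
\]
which is coupled to $x^0$ only through the inhomogeneous term $\mu x^0(t)+\eta(t)$. Similarly the forward equation for $(x^0)$ has driver $-\mu y^0(t)+\phi(t)$, $-\mu z^0(t)+\psi(t)$, $-\mu r^0(t,e)+\xi(t,e)$. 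So the natural strategy is a fixed-point argument on $\mathcal L^{2,K}_{\mathbb F}(0,\infty)$: given a candidate $\theta^0=(x^0,y^0,z^0,r^0(\cdot))$, feed $x^0$ into the backward equation to get a new $(\bar y,\bar z,\bar r)$ via Theorem~\ref{Sec2_THM} (with $G(t,y,z,r)=-\mu(1-\alpha)$-type linear term; here simply $G\equiv 0$ and $\varphi(t)=\mu x^0(t)+\eta(t)\in L^{2,K}_{\mathbb F}$, noting $K>0$ and the trivial monotonicity/Lipschitz constants make (H2.1)--(H2.4) hold), then feed $(\bar y,\bar z,\bar r)$ into the forward SDE — but the forward SDE needs an initial value, and this is where the coupling $x^0(0)=\Phi(y^0(0))$ enters.

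The key device for the initial condition is Lemma~\ref{Sec3_Lemma_AE}. Having produced $(\bar y,\bar z,\bar r)$, consider the forward SDE with driver depending on these fixed processes but with \emph{unknown} initial value $x_0\in\mathbb R^n$; by the classical SDE theory (and Proposition~\ref{Sec2_Prop}, using the dissipative structure $b(t,x)=-\mu x + (\text{stuff})$ to get the $L^{2,K}$ bound) it has a unique solution $x^{x_0}$ whose value at time $0$ is $x_0$. We need $x_0=\Phi(\bar y(0))$; but $\bar y(0)$ itself does not depend on $x_0$ in this sub-step, so we simply set $x_0:=\Phi(\bar y(0))$ and solve. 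Thus each sub-step is well-defined and the map $\Theta:\theta^0\mapsto$ (new quadruple) sends $\mathcal L^{2,K}_{\mathbb F}(0,\infty)$ into itself. The real work is to show $\Theta$ is a contraction, or that iterating it produces a Cauchy sequence; for this I would take two inputs $\theta^0,\tilde\theta^0$, apply the a priori estimate of Lemma~\ref{Sec2_Lemma} to control $\|\bar y-\tilde{\bar y}\|^2_{L^{2,K}}+\|\bar z-\tilde{\bar z}\|^2_{L^{2,K}}+\|\bar r-\tilde{\bar r}\|^2_{M^{2,K}}$ by $\|x^0-\tilde x^0\|^2_{L^{2,K}}$ (with constant controlled by $1/\delta$), use Lipschitz continuity of $\Phi$ (H3.2) to bound $|\Phi(\bar y(0))-\Phi(\tilde{\bar y}(0))|$ — here one needs a bound on $\bar y(0)-\tilde{\bar y}(0)$, which the a priori estimate also gives since its left side contains $|\hat y(0)|^2$ — and then an It\^o estimate on $|x^0-\tilde x^0|^2 e^{Kt}$ exploiting $2\mu-K>0$ (H3.4) to close the loop.

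The main obstacle I expect is the last estimate: balancing the gain from the dissipativity constant $\mu$ in the forward SDE (coefficient $-\mu(1-\alpha)$, here $-\mu$) against the loss $K$ in the exponential weight, and against the coupling through $-\mu y^0, -\mu z^0, -\mu r^0$ in the diffusion and jump coefficients, which contribute via It\^o's formula with a factor that must be absorbed. One must apply It\^o to $|x^0(t)-\tilde x^0(t)|^2 e^{Kt}$ on $[0,T]$, use $\mathbb E[|x^0(T)-\tilde x^0(T)|^2 e^{KT}]\to 0$ (Proposition~\ref{Sec2_Prop}(iii)), handle the initial term $|\Phi(\bar y(0))-\Phi(\tilde{\bar y}(0))|^2$ by (H3.2)(ii) plus the $|\hat y(0)|^2$ bound from Lemma~\ref{Sec2_Lemma}, and arrive at a strict contraction provided the various constants line up; one may need to first enlarge/shrink $K$ as in Remark~\ref{Sec3_Remark_Artificial} or introduce an auxiliary small parameter, but since $\alpha=0$ the monotone term $-\mu$ is present in full, so dissipativity is strong and the contraction should indeed close. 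Once contraction (or a Cauchy iteration) is established, the Banach fixed point theorem yields the unique $\theta^0\in\mathcal L^{2,K}_{\mathbb F}(0,\infty)$ solving \eqref{Sec3_FBSDE_Param_0}, completing the proof.
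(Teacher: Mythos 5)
There is a genuine gap, and it sits exactly where you flagged the ``main obstacle.'' In \eqref{Sec3_FBSDE_Param_0} the drift of the forward component is $-\mu y^0(t)+\phi(t)$: it contains no $x^0$ at all. So in your iteration, once $(\bar y,\bar z,\bar r)$ has been produced and frozen, the forward sub-problem reads
\[
dx=\big[-\mu\bar y+\phi\big]dt+\big[-\mu\bar z+\psi\big]dW+\int_{\mathcal E}\big[-\mu\bar r+\xi\big]\tilde N(dt,de),
\]
whose coefficients are independent of $x$; there is no dissipative structure $b(t,x)=-\mu x+(\cdots)$ to invoke. Consequently $\mathbb E[|x(t)|^2]$ has no reason to decay, the solution will in general fail to lie in $L^{2,K}_{\mathbb F}(0,\infty;\mathbb R^n)$ for $K>0$, and your map $\Theta$ is not even well defined from $\mathcal L^{2,K}_{\mathbb F}(0,\infty)$ into itself. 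Even setting this aside, the contraction cannot close: the backward sub-step, via Lemma \ref{Sec2_Lemma} with $G\equiv 0$ (so $\rho=0$, $C_1=C_2=0$, $\hat\varphi=\mu\hat x^0$), gives $\|\hat{\bar y}\|_{L^{2,K}_{\mathbb F}}^2\le\frac{\mu^2}{\delta(K-\delta)}\|\hat x^0\|_{L^{2,K}_{\mathbb F}}^2$, and the best constant over $\delta\in(0,K)$ is $4\mu^2/K^2$, which is strictly greater than $1$ precisely because (H3.4) forces $K<2\mu$. The coupling at $\alpha=0$ carries the full strength $\mu$ and there is no small parameter to borrow (contrast Lemma \ref{Sec3_Lemma_Continuation}, where every perturbative term is multiplied by $\delta\le\delta_0$); joint monotonicity of the operator $A$ does not translate into contractivity of a decoupled Picard iteration.

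The paper avoids both problems by an explicit decoupling change of variables rather than an iteration: setting $p:=y^0-x^0$, one checks that $(p,z^0,r^0)$ satisfies the \emph{autonomous} linear backward SDE \eqref{Sec3_FBSDE_Param_0_BSDE}, with driver $-\mu p+\phi+\eta$, which Theorem \ref{Sec2_THM} solves directly ($\rho=\mu$, $C_1=C_2=0$). Substituting $y^0=x+p$ back into the forward equation rewrites its drift as $-\mu(x+p)+\phi$, which \emph{does} have the dissipative term $-\mu x$, so It\^o's formula applied to $|x(t)|^2e^{Kt}$ together with $2\mu-K>0$ yields $x\in L^{2,K}_{\mathbb F}(0,\infty;\mathbb R^n)$. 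The initial condition then becomes the genuine algebraic equation $x(0)=\Phi(x(0)+p(0))$, with the unknown on both sides --- this is where Lemma \ref{Sec3_Lemma_AE} is actually needed (in your scheme $\bar y(0)$ is already known from the previous iterate, so you never really use that lemma). Finally, uniqueness is proved separately by applying It\^o's formula to $\langle\hat x(t),\hat y(t)\rangle$ and invoking the monotonicity of $\Phi$ and of the linear coefficients. If you want to repair your argument, the key missing idea is this substitution $p=y^0-x^0$ (or an equivalent way of solving the coupled linear system in one shot); a decoupled fixed-point loop cannot work here.
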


\begin{proof}
Let us consider a linear infinite horizon backward SDE:
\begin{equation}\label{Sec3_FBSDE_Param_0_BSDE}
\begin{aligned}
-dp(t) =\ & \left[ -\mu p(t) +\phi(t) +\eta(t)\right] dt -\left[
(1+\mu)q(t) -\psi(t) \right] dW(t)\\
& -\int_{\mathcal E} \left[ (1+\mu)k(t,e) -\xi(t,e) \right] \tilde
N(dt, de),
\end{aligned}
\end{equation}
and a linear infinite horizon (forward) SDE combined with an
algebraic equation:
\begin{equation}\label{Sec3_FBSDE_Param_0_SDE}
\left\{
\begin{aligned}
dx(t) =\ & \left[ -\mu(x(t)+p(t)) +\phi(t) \right] dt +\left[ -\mu
q(t) +\psi(t) \right] dW(t)\\
& +\int_{\mathcal E} \left[ -\mu k(t,e) +\xi(t,e) \right] \tilde
N(dt, de),\\
x(0) =\ & \Phi(x(0)+p(0)).
\end{aligned}
\right.
\end{equation}
Due to Theorem \ref{Sec2_THM} with $C_1=C_2=0$ and $\rho=\mu$, the
backward SDE \eqref{Sec3_FBSDE_Param_0_BSDE} admits a unique
solution $(p,q,k)\in L^{2,K}_{\mathbb F}(0,\infty;\mathbb R^n)
\times L^{2,K}_{\mathbb F}(0,\infty;\mathbb R^{n\times d}) \times
M^{2,K}_{\mathbb F}(0,\infty;\mathbb R^{n\times l})$. Once $(p,q,k)$
is solved, by Lemma \ref{Sec3_Lemma_AE}, we can uniquely solve
$x(0)$ from the initial condition $x(0) = \Phi(x(0)+p(0))$. then we
solve SDE \eqref{Sec3_FBSDE_Param_0_SDE}. It admits a unique
solution $x$. Next we shall show that $x\in L^{2,K}_{\mathbb
F}(0,\infty;\mathbb R^n)$. For any constant $T>0$, we apply
It\^{o}'s formula to $|x(t)|^2 e^{K t}$ on the finite interval
$[0,T]$:
\[
\begin{aligned}
& \mathbb E\left[ |x(T)|^2 e^{K T} \right] +(2\mu-
K)\mathbb E\int_0^T |x(t)|^2 e^{K t} dt\\
=\ & |x(0)|^2 +\mathbb E\int_0^T \Big[ 2 \langle x(t),\ \phi(t)-\mu
p(t) \rangle +|\psi(t)-\mu q(t)|^2 +\norm{\xi(t,\cdot)-\mu
k(t,\cdot)}^2 \Big] e^{K t} dt.
\end{aligned}
\]
Since $K <2\mu$, then there exists a constant $\varepsilon>0$ such
that $2\mu- K-\varepsilon >0$. By the inequality $2\langle a,\ b
\rangle \leq \varepsilon |a|^2 +(1/\varepsilon)|b|^2$, we have
\[
\begin{aligned}
& \mathbb E\left[ |x(T)|^2 e^{K T} \right] +(2\mu-
K-\varepsilon) \mathbb E\int_0^T |x(t)|^2 e^{ K t} dt\\
\leq\ & |x(0)|^2 +\mathbb E\int_0^T \left[ \frac{1}{\varepsilon}
|\phi(t)-\mu p(t)|^2 +|\psi(t)-\mu q(t)|^2 +\norm{\xi(t,\cdot)-\mu
k(t,\cdot)}^2 \right] e^{ K t} dt.
\end{aligned}
\]
Letting $T\rightarrow\infty$, we have
\[
\begin{aligned}
&(2\mu-
K-\varepsilon) \mathbb E\int_0^\infty |x(t)|^2 e^{K t} dt\\
\leq\ & |x(0)|^2 +\mathbb E\int_0^\infty \left[
\frac{1}{\varepsilon} |\phi(t)-\mu p(t)|^2 +|\psi(t)-\mu q(t)|^2
+\norm{\xi(t,\cdot)-\mu k(t,\cdot)}^2 \right] e^{ K t} dt.
\end{aligned}
\]
We have proved $x\in L^{2, K}_{\mathbb F}(0,\infty;\mathbb R^n)$.

It is easy to verify that $(x^0,y^0,z^0,r^0) = (x,x+p,q,k)$ is a
solution to the forward-backward SDE \eqref{Sec3_FBSDE_Param_0}. We
proved the existence.

We would like to prove the uniqueness in a bigger space: $\mathcal
L^{2}_{\mathbb F}(0,\infty)$. Let $\theta_1(\cdot) =
(x_1(\cdot),y_1(\cdot),z_1(\cdot),r_1(\cdot,\cdot))$ and
$\theta_2(\cdot) =
(x_2(\cdot),y_2(\cdot),z_2(\cdot),r_2(\cdot,\cdot))$ belonging to
$\mathcal L^{2}_{\mathbb F}(0,\infty)$  be two solutions to the
forward-backward SDE \eqref{Sec3_FBSDE_Param_0}. We denote
$\hat\theta(\cdot) = (\hat x(\cdot),\hat y(\cdot),\hat z(\cdot),\hat
r(\cdot,\cdot)) = (x_1(\cdot)-x_2(\cdot), y_1(\cdot)-y_2(\cdot),
z_1(\cdot)-z_2(\cdot), r_1(\cdot,\cdot)-r_2(\cdot,\cdot))$ and apply
It\^{o}'s formula to $\langle \hat x(t),\ \hat y(t) \rangle$ on the
interval $[0,T]$ to get
\[
\mathbb E\Big[ \langle \hat x(T),\ \hat y(T) \rangle \Big] +\mu
\mathbb E\int_0^T |\hat \theta(t)|^2 dt =\Big\langle
\Phi(y_1(0))-\Phi(y_2(0)), \hat y(0) \Big\rangle.
\]
By the monotonicity condition of $\Phi$, we get
\[
\mathbb E\Big[ \langle \hat x(T),\ \hat y(T) \rangle \Big] +\mu
\mathbb E\int_0^T |\hat \theta(t)|^2 dt \leq 0.
\]
Letting $T\rightarrow\infty$, thanks to Proposition \ref{Sec2_Prop}
and Corollary \ref{Sec2_Corollary}, we have
\[
\mu \mathbb E\int_0^\infty |\hat \theta(t)|^2 dt \leq 0.
\]
The uniqueness is proved.
\end{proof}

The above Lemma \ref{Sec3_Lemma_0} shows that, when $\alpha=0$, the
forward-backward SDE \eqref{Sec3_FBSDE_Param} is in a simple form
and then is uniquely solvable. It is clear that, when $\alpha =1$
and $(\eta, \phi, \psi, \xi)$ vanish, the forward-backward SDE
\eqref{Sec3_FBSDE_Param} coincides with \eqref{Sec3_FBSDE}. We will
show that there exists a fixed step-length $\delta_0>0$, such that,
if, for some $\alpha_0\in [0,1)$, \eqref{Sec3_FBSDE_Param} is
uniquely solvable for any $(\eta, \phi, \psi, \xi) \in \mathcal
L^{2,K}_{\mathbb F}(0,\infty)$, then the same conclusion holds for
$\alpha_0$ being replaced by $\alpha_0+\delta \leq 1$ with
$\delta\in [0,\delta_0]$. Once this has been proved, we can increase
the parameter $\alpha$ step by step and finally reach $\alpha =1$,
which gives the unique solvability of the forward-backward SDE
\eqref{Sec3_FBSDE}. This idea is adopted from
\cite{HuPeng1995,PengShi2000,PengWu1999,Yong1997,Yong2010}, and this
method is called the method of continuation.

Now, we prove the following continuation lemma.

\begin{lemma}\label{Sec3_Lemma_Continuation}
Under Assumptions (H3.1)-(H3.4), there exists an absolute constant
$\delta_0>0$ such that, if, for some $\alpha_0\in [0,1)$, the
forward-backward SDE \eqref{Sec3_FBSDE_Param} is uniquely solvable
in $\mathcal L^{2,K}_{\mathbb F}(0,\infty)$ for any $(\eta, \phi,
\psi, \xi) \in \mathcal L^{2,K}_{\mathbb F}(0,\infty)$, then the
same is true for $\alpha = \alpha_0 +\delta$ with $\delta\in
[0,\delta_0]$ and $\alpha_0+\delta\leq 1$.
\end{lemma}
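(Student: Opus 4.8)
The plan is to use the standard continuation argument: given a fixed-point map built from the uniquely-solvable system at parameter $\alpha_0$, show it is a contraction when $\delta$ is small, with the contraction constant depending only on the structural constants $\mu$, $C$, $K$ (hence an absolute step-length $\delta_0$). Concretely, I would fix $\delta\in[0,\delta_0]$ with $\alpha_0+\delta\le 1$, fix arbitrary $(\eta,\phi,\psi,\xi)\in\mathcal L^{2,K}_{\mathbb F}(0,\infty)$, and set up an iteration: given $\Theta(\cdot)=(X(\cdot),Y(\cdot),Z(\cdot),R(\cdot,\cdot))\in\mathcal L^{2,K}_{\mathbb F}(0,\infty)$, define $\theta(\cdot)=(x(\cdot),y(\cdot),z(\cdot),r(\cdot,\cdot))$ as the unique solution (guaranteed by the hypothesis at $\alpha_0$) of the forward-backward SDE \eqref{Sec3_FBSDE_Param} with parameter $\alpha_0$ and with the free terms augmented by the $\delta$-perturbation evaluated along $\Theta$. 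That is, in the drift/diffusion/jump coefficients of the forward equation one replaces $\phi(t)$ by $\phi(t)+\delta\big[b(t,\Theta(t))+\mu Y(t)\big]$ and similarly for $\psi,\xi$ with $\sigma,\gamma$ and $\mu Z,\mu R$; in the backward equation one replaces $\eta(t)$ by $\eta(t)+\delta\big[g(t,\Theta(t))-\mu X(t)\big]$. By (H3.1)--(H3.2) these augmented free terms lie in $\mathcal L^{2,K}_{\mathbb F}(0,\infty)$, so $\theta$ is well-defined; this gives a map $\mathcal I_{\alpha_0+\delta}:\Theta\mapsto\theta$, whose fixed points are exactly the solutions of \eqref{Sec3_FBSDE_Param} at $\alpha=\alpha_0+\delta$.

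Next I would prove $\mathcal I_{\alpha_0+\delta}$ is a contraction on $\mathcal L^{2,K}_{\mathbb F}(0,\infty)$. Take two inputs $\Theta^1,\Theta^2$ with outputs $\theta^1,\theta^2$, write $\hat\Theta=\Theta^1-\Theta^2$, $\hat\theta=\theta^1-\theta^2$, and apply It\^o's formula to $\langle\hat x(t),\hat y(t)\rangle e^{Kt}$ on $[0,T]$, then let $T\to\infty$. Here the boundary term at infinity vanishes by Proposition \ref{Sec2_Prop} and Corollary \ref{Sec2_Corollary} applied to the forward and backward components (their coefficients satisfy (H1.1)--(H1.2) and (H2.1)--(H2.2)), and the boundary term at $0$ is $\langle\Phi(y^1(0))-\Phi(y^2(0)),\hat y(0)\rangle\le 0$ by the monotonicity (H3.3)(ii). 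The monotonicity (H3.3)(i) of $A$ produces a term $-\mu\,\mathbb E\int_0^\infty|\hat\theta(t)|^2 e^{Kt}\,dt$ on the favourable side, while the $\delta$-perturbation produces, via the Lipschitz bound (H3.2) and Cauchy--Schwarz with a weight, a term bounded by $\delta C'\,\|\hat\Theta\|_{\mathcal L^{2,K}_\mathbb F}\|\hat\theta\|_{\mathcal L^{2,K}_\mathbb F}$ (with $C'$ depending only on $C$ and $\mu$). Combining, $\mu\|\hat\theta\|^2_{\mathcal L^{2,K}_\mathbb F}\le \delta C'\|\hat\Theta\|_{\mathcal L^{2,K}_\mathbb F}\|\hat\theta\|_{\mathcal L^{2,K}_\mathbb F}$, hence $\|\hat\theta\|_{\mathcal L^{2,K}_\mathbb F}\le (\delta C'/\mu)\|\hat\Theta\|_{\mathcal L^{2,K}_\mathbb F}$. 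Choosing $\delta_0=\mu/(2C')$, the map is a $\tfrac12$-contraction whenever $\delta\le\delta_0$; by the Banach fixed point theorem it has a unique fixed point, which is the unique solution of \eqref{Sec3_FBSDE_Param} at $\alpha=\alpha_0+\delta$. Note $\delta_0$ depends only on $\mu$ and $C$, not on $\alpha_0$ or on $(\eta,\phi,\psi,\xi)$, as required.

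The main obstacle I anticipate is the careful bookkeeping in the It\^o-formula estimate: one must identify exactly which cross terms appear, verify that the $\langle A(t,\theta^1)-A(t,\theta^2),\hat\theta\rangle$ grouping genuinely captures all the $\alpha_0$-weighted coefficient differences plus the linear $\mp\mu(1-\alpha_0)$ terms so that (H3.3)(i) applies cleanly, and control the jump terms $\norm{\cdot}$ using the inner product structure on $\mathcal R$ rather than just scalar estimates. A secondary technical point is justifying the passage $T\to\infty$: one needs that $\hat x,\hat y$ (and the perturbed forward/backward components used along the way) lie in the weighted spaces so that $\mathbb E[|\hat x(T)||\hat y(T)|e^{KT}]\to 0$ and the stochastic integrals are true martingales, which follows from Proposition \ref{Sec2_Prop} and Corollary \ref{Sec2_Corollary} once one checks the relevant coefficients meet their hypotheses — routine but requiring attention because the free terms now contain $\Theta$-dependent pieces. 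Everything else is a standard application of the Banach fixed point theorem.
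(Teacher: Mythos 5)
Your proposal follows essentially the same route as the paper's proof: the same fixed-point map obtained by freezing the $\delta$-perturbation along the input and solving at parameter $\alpha_0$, the same It\^o computation on $\langle \hat x(t),\ \hat y(t)\rangle e^{Kt}$ combined with the monotonicity of $A$ and $\Phi$ and the Lipschitz bound, and the same Banach fixed-point conclusion. The only slip is bookkeeping: the $e^{Kt}$ weight contributes an extra $K\langle \hat x(t),\ \hat y(t)\rangle$ term, so the favourable coefficient is $\mu-\frac{K}{2}$ rather than $\mu$ (still positive by (H3.4)), and hence $\delta_0$ depends on $K$ as well as on $C$ and $\mu$.
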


\begin{proof}
Let $\delta_0$ be determined as follows. Let $\delta\in
[0,\delta_0]$. For each $\theta(\cdot) =
(x(\cdot),y(\cdot),z(\cdot),r(\cdot,\cdot)) \in \mathcal
L^{2,K}_{\mathbb F}(0,\infty)$, we consider the following
forward-backward SDE (compared to \eqref{Sec3_FBSDE_Param} with
$\alpha =\alpha_0 +\delta$):
\begin{equation}
\left\{
\begin{aligned}
dX(t) =\ & \Big[ \alpha_0 b(t,\Theta(t)) -\mu(1-\alpha_0) Y(t)
+\delta\big( b(t,\theta(t)) +\mu y(t) \big) +\phi(t) \Big]dt\\
& +\Big[ \alpha_0 \sigma(t,\Theta(t)) -\mu(1-\alpha_0) Z(t)
+\delta\big( \sigma(t,\theta(t)) +\mu z(t) \big) +\psi(t)
\Big]dW(t)\\
& +\int_{\mathcal E} \Big[ \alpha_0 \gamma(t,e,\Theta(t-))
-\mu(1-\alpha_0) R(t,e)\\
& +\delta\big( \gamma(t,e,\theta(t-)) +\mu r(t,e) \big) +\xi(t,e)
\Big] \tilde N(dt,de),\\
-dY(t) =\ & \Big[ \alpha_0 g(t,\Theta(t)) +\mu(1-\alpha_0)X(t)
+\delta\big( g(t,\theta(t))-\mu x(t) \big) +\eta(t) \Big] dt\\
& -Z(t) dW(t) -\int_{\mathcal E} R(t,e) \tilde N(dt,de),\\
X(0) =\ & \Phi(Y(0)).
\end{aligned}
\right.
\end{equation}
It is easy to check that $\big( \delta\big(
g(\cdot,\theta(\cdot))-\mu x(\cdot) \big) +\eta(\cdot),\ \delta\big(
b(\cdot,\theta(\cdot))+\mu y(\cdot) \big) +\phi(\cdot),\ \delta\big(
\sigma(\cdot,\theta(\cdot))+\mu z(\cdot) \big) +\psi(\cdot),\
\delta\big( \gamma(\cdot,\cdot,\theta(\cdot-)) +\mu r(\cdot,\cdot)
\big) +\xi(\cdot,\cdot) \big) \in \mathcal L^{2,K}_{\mathbb
F}(0,\infty)$. Then, by our assumptions, the above forward-backward
SDE is uniquely solvable in the space $\mathcal L^{2,K}_{\mathbb F}
(0,\infty)$. We denote the unique solution by $\Theta(\cdot) =
(X(\cdot),Y(\cdot),Z(\cdot),R(\cdot,\cdot))$. We have established a
mapping
\[
\Theta = I_{\alpha_0+\delta}(\theta) : \mathcal L^{2,K}_{\mathbb
F}(0,\infty) \rightarrow \mathcal L^{2,K}_{\mathbb F}(0,\infty).
\]
Next we shall prove it is a contraction.

Let $\theta_1 = (x_1,y_1,z_1,r_1)$, $\theta_2 = (x_2,y_2,z_2,r_2)
\in \mathcal L^{2,K}_{\mathbb F}(0,\infty)$ and $\Theta_1 =(X_1,
Y_1, Z_1,R_1) = I_{\alpha_0+\delta}(\theta_1)$, $\Theta_2 =(X_2,
Y_2, Z_2,R_2) = I_{\alpha_0+\delta}(\theta_2)$. Let
\[
\begin{aligned}
\hat \theta =\ & (\hat x,\hat y,\hat z,\hat r) = (x_1-x_2, y_1-
y_2, z_1- z_2, r_1- r_2),\\
\hat \Theta =\ & (\hat X,\hat Y,\hat Z,\hat R) = (X_1-X_2, Y_2- Y_2,
Z_1-Z_2, R_1- R_2).
\end{aligned}
\]
For any $T>0$, applying It\^{o}'s formula to $\langle \hat X(t),\
\hat Y(t) \rangle e^{Kt}$ on the interval $[0,T]$, we have
\[
\begin{aligned}
& \mathbb E\Big[ \left\langle \hat X(T),\ \hat Y(T) \right\rangle
e^{KT} \Big] -\left\langle \Phi(Y_1(0))-\Phi(Y_2(0)),\ \hat Y(0)
\right\rangle\\
=\ & \alpha_0 \mathbb E\int_0^T \left\langle
A(t,\Theta_1(t))-A(t,\Theta_2(t)),\ \hat\Theta(t) \right\rangle
e^{Kt} dt -\mu(1-\alpha_0) \mathbb E\int_0^T |\hat \Theta(t)|^2
e^{Kt} dt\\
& +\delta\mathbb E\int_0^T \left\langle
A(t,\theta_1(t))-A(t,\theta_2(t)),\ \hat\Theta(t) \right\rangle
e^{Kt} dt +\delta\mu \mathbb E\int_0^T \left\langle \hat\theta(t),\
\hat\Theta(t) \right\rangle e^{Kt} dt\\
& + K\mathbb E\int_0^T \left\langle \hat X(t),\ \hat Y(t)
\right\rangle e^{Kt} dt.
\end{aligned}
\]
By Assumptions (H3.2) and (H3.3), we deduce
\[
\begin{aligned}
& \mathbb E\Big[ \left\langle \hat X(T),\ \hat Y(T) \right\rangle
e^{KT} \Big] +(\mu -\frac 1 2 K) \mathbb E\int_0^T |\hat\Theta(t)|^2 e^{Kt} dt\\
\leq\ & \delta(C+\mu) \mathbb E\int_0^T |\hat\theta(t)|
|\hat\Theta(t)| e^{Kt} dt.
\end{aligned}
\]
Since $\frac{1}{2}K <\mu$ (see Assumption (H3.4)), then there exists
a constant $\varepsilon>0$ such that $\mu- \frac 1 2 K-\varepsilon
>0$. Then we have
\[
\begin{aligned}
& \mathbb E\Big[ \left\langle \hat X(T),\ \hat Y(T) \right\rangle
e^{KT} \Big] +(\mu -\frac 1 2 K -\varepsilon) \mathbb E\int_0^T |\hat\Theta(t)|^2 e^{Kt} dt\\
\leq\ & \delta^2 \cdot \frac{(C+\mu)^2}{4\varepsilon} \mathbb
E\int_0^T |\hat\theta(t)|^2 e^{Kt} dt.
\end{aligned}
\]
Letting $T\rightarrow\infty$, we have
\[
\mathbb E\int_0^\infty |\hat\Theta(t)|^2 e^{Kt} dt \leq \delta^2
\cdot \frac{(C+\mu)^2}{2\varepsilon(2\mu-K-2\varepsilon)} \mathbb
E\int_0^T |\hat\theta(t)|^2 e^{Kt} dt.
\]
Now we choose $\delta_0^2 =
\frac{\varepsilon(2\mu-K-2\varepsilon)}{2(C+\mu)^2}$, then for any
$\delta\in [0,\delta_0]$, we have the following estimate:
\[
\norm{\hat\Theta(\cdot)}_{\mathcal L^{2,K}_{\mathbb F}} \leq \frac 1
2 \norm{\hat\theta(\cdot)}_{\mathcal L^{2,K}_{\mathbb F}}.
\]
This implies that the mapping $I_{\alpha_0+\delta}$ is a
contraction. Hence, it has a unique fixed point, which is the unique
solution of \eqref{Sec3_FBSDE_Param} for $\alpha =\alpha_0+\delta$.
We complete the proof.
\end{proof}

Now, we give an existence and uniqueness result for the
forward-backward SDE \eqref{Sec3_FBSDE}.

\begin{theorem}\label{Sec3_THM}
Under Assumptions (H3.1)-(H3.4), the forward-backward SDE
\eqref{Sec3_FBSDE} admits a unique solution
$(x(\cdot),y(\cdot),z(\cdot),r(\cdot,\cdot)) \in \mathcal
L^{2,K}_{\mathbb F}(0,\infty)$.
\end{theorem}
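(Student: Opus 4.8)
The plan is to run the method of continuation all the way to the endpoint $\alpha = 1$, using Lemma \ref{Sec3_Lemma_0} as the base case and Lemma \ref{Sec3_Lemma_Continuation} as the inductive step. First I would record that Lemma \ref{Sec3_Lemma_0} supplies precisely the hypothesis required to start the induction: for $\alpha_0 = 0$, the parametrized equation \eqref{Sec3_FBSDE_Param} is uniquely solvable in $\mathcal L^{2,K}_{\mathbb F}(0,\infty)$ for \emph{every} choice of inhomogeneous data $(\eta,\phi,\psi,\xi)\in\mathcal L^{2,K}_{\mathbb F}(0,\infty)$ — the universal quantifier over $(\eta,\phi,\psi,\xi)$ being essential, since the contraction mapping built in Lemma \ref{Sec3_Lemma_Continuation} feeds shifted data back into the equation at the previous parameter value.

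Next, because the step-length $\delta_0 > 0$ produced in Lemma \ref{Sec3_Lemma_Continuation} is an absolute constant (it depends only on $\mu$, $K$, $C$ and the auxiliary $\varepsilon$, not on the base point $\alpha_0$), I would iterate the lemma: starting from $\alpha_0 = 0$ one obtains unique solvability for all $\alpha\in[0,\delta_0]$; taking then $\alpha_0 = \delta_0$ extends it to $\alpha\in[\delta_0,2\delta_0]$; and after $N := \lceil 1/\delta_0\rceil$ applications one reaches $\alpha = 1$. Hence \eqref{Sec3_FBSDE_Param} with $\alpha = 1$ is uniquely solvable in $\mathcal L^{2,K}_{\mathbb F}(0,\infty)$ for any $(\eta,\phi,\psi,\xi)$.

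Finally, I would specialize to $(\eta,\phi,\psi,\xi) = (0,0,0,0)$, which trivially belongs to $\mathcal L^{2,K}_{\mathbb F}(0,\infty)$; then \eqref{Sec3_FBSDE_Param} collapses term by term — in the drift, diffusion, jump and initial coefficients alike — to the original forward-backward SDE \eqref{Sec3_FBSDE}. Consequently \eqref{Sec3_FBSDE} possesses a unique solution $(x(\cdot),y(\cdot),z(\cdot),r(\cdot,\cdot))\in\mathcal L^{2,K}_{\mathbb F}(0,\infty)$.

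Since the two lemmas already carry all the analytic weight, there is essentially no obstacle left in the proof of the theorem itself; the only points needing a moment's care are to note that "unique solvability for arbitrary inhomogeneous data" is exactly the conclusion, not merely a hypothesis, of Lemma \ref{Sec3_Lemma_Continuation} (so the induction genuinely propagates), and that $\delta_0$ does not shrink as $\alpha_0$ increases (so the procedure terminates after finitely many steps rather than stalling short of $\alpha=1$). The real difficulties — solving the decoupled system at $\alpha=0$, in particular handling the coupled initial condition $x(0)=\Phi(y(0))$ through the algebraic equation of Lemma \ref{Sec3_Lemma_AE}, and establishing the a priori contraction estimate with the exponential weight $e^{Kt}$ on the infinite horizon — have already been disposed of in Lemmas \ref{Sec3_Lemma_0} and \ref{Sec3_Lemma_Continuation}.
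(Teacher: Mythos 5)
Your proposal is correct and follows exactly the paper's own argument: iterate the continuation lemma from the base case $\alpha_0=0$ in steps of the absolute length $\delta_0$ until reaching $\alpha=1$, then set $(\eta,\phi,\psi,\xi)=0$ to recover \eqref{Sec3_FBSDE}. The additional remarks on why the quantifier over the inhomogeneous data propagates and why the iteration terminates are accurate but not a departure from the paper's proof.
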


\begin{proof}
By Lemma \ref{Sec3_Lemma_0} and Lemma \ref{Sec3_Lemma_Continuation},
we can solve the forward-backward SDE \eqref{Sec3_FBSDE_Param}
uniquely for any $\alpha\in [0,1]$ and $(\eta, \phi, \psi, \xi)\in
\mathcal L^{2,K}_{\mathbb F}(0,\infty)$. Particularly,
\eqref{Sec3_FBSDE_Param} with $\alpha=1$ and $(\eta, \phi, \psi,
\xi) =0$, which is \eqref{Sec3_FBSDE}, admits a unique solution. We
finish the proof.
\end{proof}

\begin{remark}
\begin{enumerate}[(i)]
\item By a similar proof as that of Lemma \ref{Sec3_Lemma_0}, the
uniqueness holds true in the bigger space $\mathcal L^2_{\mathbb
F}(0,\infty)$.
\item By Proposition \ref{Sec2_Prop} and Corollary
\ref{Sec2_Corollary}, the unique solution (x,y,z,r) of the
forward-backward SDE \eqref{Sec3_FBSDE} belongs to $\mathcal
X^K(0,\infty;\mathbb R^n) \times \mathcal X^K(0,\infty;\mathbb R^n)
\times L^{2,K}_{\mathbb F}(0,\infty;\mathbb R^{n\times d}) \times
M^{2,K}_{\mathbb F}(0,\infty;\mathbb R^{n\times l})$ exactly.
\end{enumerate}
\end{remark}

In the rest of this section, we would like to establish some
properties of the solutions to forward-backward SDEs including two
stability results and a comparison theorem. First we establish the
stability results.

\begin{proposition}\label{Sec3.1_Prop_Stability}
Let $(b_1,\sigma_1,\gamma_1,g_1,\Phi_1)$ and
$(b_2,\sigma_2,\gamma_2,g_2,\Phi_2)$ be two sets of coefficients of
forward-backward SDEs satisfying Assumptions (H3.1)-(H3.4). Let
$\theta_1=(x_1,y_1,z_1,r_1)$ and $\theta_2=(x_2,y_2,z_2,r_2)$ be the
corresponding solutions.
\begin{enumerate}[(i)]
\item If we assume that $\Phi_1=\Phi_2$, then
\begin{equation}\label{Sec3.1_L2_1}
\mathbb E\int_0^\infty |\theta_1(t)-\theta_2(t)|^2 e^{Kt} dt \leq C
\mathbb E\int_0^\infty | A_1(t,\theta_2(t)) -A_2(t,\theta_2(t)) |^2
e^{Kt} dt,
\end{equation}
where $C$ is a constant depending on $\mu$ and $K$.
\item If we strengthen the monotonicity condition on $\Phi_1$ as
follows: there exists a constant $\nu>0$ such that for any $y_1$,
$y_2\in\mathbb R^n$,
\begin{equation}\label{Sec3.1_StrongMonot}
\langle \Phi_1(y_1)-\Phi_1(y_2),\ y_1-y_2 \rangle \leq -\nu
|y_1-y_2|^2,
\end{equation}
then
\begin{equation}\label{Sec3.1_L2_2}
\begin{aligned}
& |y_1(0)-y_2(0)|^2 +\mathbb E\int_0^\infty
|\theta_1(t)-\theta_2(t)|^2 e^{Kt} dt\\
& \leq C \left\{ |\Phi_1(y_2(0))-\Phi_2(y_2(0))|^2 + \mathbb
E\int_0^\infty | A_1(t,\theta_2(t)) -A_2(t,\theta_2(t)) |^2 e^{Kt}
dt \right\},
\end{aligned}
\end{equation}
where $C$ is a constant depending on $\mu$, $\nu$ and $K$.
\end{enumerate}
\end{proposition}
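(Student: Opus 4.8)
The plan is to adapt the duality computation from the proof of Lemma~\ref{Sec3_Lemma_Continuation} to a comparison of two solutions of \emph{different} forward-backward SDEs. Write $\hat\theta=(\hat x,\hat y,\hat z,\hat r):=\theta_1-\theta_2$ and $\hat A(t):=A_1(t,\theta_2(t))-A_2(t,\theta_2(t))$. Since $\theta_i$ solves \eqref{Sec3_FBSDE} with coefficients $(A_i,\Phi_i)$, the process $\hat x$ solves a forward SDE with drift $b_1(t,\theta_1)-b_2(t,\theta_2)$, diffusion $\sigma_1(t,\theta_1)-\sigma_2(t,\theta_2)$ and jump coefficient $\gamma_1(t,e,\theta_1)-\gamma_2(t,e,\theta_2)$, while $\hat y$ solves the backward equation with generator $g_1(t,\theta_1)-g_2(t,\theta_2)$ and martingale parts $\hat z,\hat r$; the coupled initial datum is $\hat x(0)=\Phi_1(y_1(0))-\Phi_2(y_2(0))$. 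First I would apply It\^o's formula to $\langle\hat x(t),\hat y(t)\rangle e^{Kt}$ on $[0,T]$. Exactly as in Lemma~\ref{Sec3_Lemma_Continuation}, the drift and quadratic-covariation contributions collect (using $A=(-g,b,\sigma,\gamma)$, with the $\gamma$-component paired against $\hat r$ in the $L^2(\mathcal E,\mathcal B(\mathcal E),\pi)$ inner product) into
\[
\mathbb E\big[\langle\hat x(T),\hat y(T)\rangle e^{KT}\big]-\langle\Phi_1(y_1(0))-\Phi_2(y_2(0)),\ \hat y(0)\rangle=\mathbb E\int_0^T\Big[\big\langle A_1(t,\theta_1(t))-A_2(t,\theta_2(t)),\ \hat\theta(t)\big\rangle+K\langle\hat x(t),\hat y(t)\rangle\Big]e^{Kt}\,dt .
\]

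Next I would split $A_1(t,\theta_1)-A_2(t,\theta_2)=[A_1(t,\theta_1)-A_1(t,\theta_2)]+\hat A(t)$, bound the first bracket by $-\mu|\hat\theta|^2$ via the monotonicity (H3.3)(i), estimate $\langle\hat A(t),\hat\theta\rangle\le\varepsilon|\hat\theta|^2+\frac{1}{4\varepsilon}|\hat A(t)|^2$ and $K\langle\hat x,\hat y\rangle\le\frac{K}{2}|\hat\theta|^2$, and pick $\varepsilon\in(0,\mu-K/2)$, which is nonempty by (H3.4). Letting $T\to\infty$ and using that $x_i,y_i\in\mathcal X^K(0,\infty;\mathbb R^n)$ (the remark after Theorem~\ref{Sec3_THM}), so that Proposition~\ref{Sec2_Prop}(iii) and Corollary~\ref{Sec2_Corollary}(iii) force $\mathbb E[|\hat x(T)|^2e^{KT}]\to0$ and $\mathbb E[|\hat y(T)|^2e^{KT}]\to0$, whence $\mathbb E[\langle\hat x(T),\hat y(T)\rangle e^{KT}]\to0$ by Cauchy--Schwarz, I arrive at
\[
\Big(\mu-\varepsilon-\frac{K}{2}\Big)\,\mathbb E\int_0^\infty|\hat\theta(t)|^2e^{Kt}\,dt\le\big\langle\Phi_1(y_1(0))-\Phi_2(y_2(0)),\ \hat y(0)\big\rangle+\frac{1}{4\varepsilon}\,\mathbb E\int_0^\infty|\hat A(t)|^2e^{Kt}\,dt .
\]

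For part (i), $\Phi_1=\Phi_2$ together with (H3.3)(ii) makes the first term on the right nonpositive, which is \eqref{Sec3.1_L2_1} with $C$ depending only on $\mu$ and $K$. For part (ii), I would write $\langle\Phi_1(y_1(0))-\Phi_2(y_2(0)),\hat y(0)\rangle=\langle\Phi_1(y_1(0))-\Phi_1(y_2(0)),\hat y(0)\rangle+\langle\Phi_1(y_2(0))-\Phi_2(y_2(0)),\hat y(0)\rangle$, bound the first summand by $-\nu|\hat y(0)|^2$ via \eqref{Sec3.1_StrongMonot} and split the second by Young's inequality into $\frac{\nu}{2}|\hat y(0)|^2+\frac{1}{2\nu}|\Phi_1(y_2(0))-\Phi_2(y_2(0))|^2$; moving the surviving $\frac{\nu}{2}|\hat y(0)|^2=\frac{\nu}{2}|y_1(0)-y_2(0)|^2$ to the left-hand side and combining with the $|\hat\theta|^2$-integral yields \eqref{Sec3.1_L2_2} with a constant $C$ depending only on $\mu,\nu,K$. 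Finiteness of the right-hand sides is automatic, since $A_i(\cdot,0)\in\mathcal L^{2,K}_{\mathbb F}(0,\infty)$ by (H3.1), while $\theta_2\in\mathcal L^{2,K}_{\mathbb F}(0,\infty)$ and (H3.2)(i) give $A_i(\cdot,\theta_2(\cdot))\in\mathcal L^{2,K}_{\mathbb F}(0,\infty)$, hence so is $\hat A$.

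The only point that needs care—more bookkeeping than a genuine obstacle—is that the stochastic integrals in the It\^o expansion are true martingales on $[0,T]$ (so that they vanish in expectation) and that the jump part of $d[\hat x,\hat y]_t$ reassembles correctly into the $L^2(\mathcal E,\pi)$-pairing of the $\gamma$-component with $\hat r$; both are handled exactly as in Lemma~\ref{Sec2_Lemma} and Lemma~\ref{Sec3_Lemma_Continuation}, using $x_i,y_i\in S^{2,loc}_{\mathbb F}(0,\infty;\mathbb R^n)$ and $z_i\in L^{2,K}_{\mathbb F}$, $r_i\in M^{2,K}_{\mathbb F}$. Everything else reduces to Young's inequality and the structural hypotheses (H3.1)--(H3.4).
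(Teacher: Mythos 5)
Your proposal is correct and follows essentially the same route as the paper: apply It\^o's formula to $\langle \hat x(t),\hat y(t)\rangle e^{Kt}$, split $A_1(t,\theta_1)-A_2(t,\theta_2)$ into the monotone increment $A_1(t,\theta_1)-A_1(t,\theta_2)$ plus the perturbation $\hat A(t)$ (and likewise for the $\Phi$ terms at $t=0$), absorb via Young's inequality using $2\mu-K>0$, and pass to the limit $T\to\infty$ with the cross term vanishing by Proposition \ref{Sec2_Prop} and Corollary \ref{Sec2_Corollary}. Your treatment is in fact slightly more explicit than the paper's (which delegates the final $\varepsilon$-bookkeeping to ``a similar technique as in Lemma \ref{Sec3_Lemma_Continuation}''), and the justification that $\mathbb E[\langle\hat x(T),\hat y(T)\rangle e^{KT}]\to 0$ is correctly spelled out.
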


\begin{proof}
We apply It\^{o}'s formula to $\langle \hat x(t),\ \hat y(t) \rangle
e^{Kt}$ on the interval $[0,T]$:
\[
\begin{aligned}
& \mathbb E\Big[ \langle\hat x(T),\ \hat y(T)\rangle e^{KT} \Big]
-\langle \Phi_1(y_1(0))-\Phi_1(y_2(0)),\ \hat y(0) \rangle\\
& -\mathbb E\int_0^T \Big\langle A_1(t,\theta_1(t))
-A_1(t,\theta_2(t)),\ \hat \theta(t) \Big\rangle e^{Kt} dt\\
=\ & \langle \Phi_1(y_2(0))-\Phi_2(y_2(0)),\ \hat y(0) \rangle\\
& +\mathbb E\int_0^T \Big\{ \Big\langle
A_1(t,\theta_2(t))-A_2(t,\theta_2(t)), \hat \theta(t) \Big\rangle
+K\Big\langle \hat x(t),\ \hat y(t) \Big\rangle \Big\} e^{Kt} dt,
\end{aligned}
\]
where the notations $\hat x := x_1-x_2$ etc. By the monotonicity
condition on $A_1$, we have
\begin{equation}\label{Sec3.1_Temp}
\begin{aligned}
& \mathbb E\Big[ \langle\hat x(T),\ \hat y(T)\rangle e^{KT} \Big]
-\langle \Phi_1(y_1(0))-\Phi_1(y_2(0)),\ \hat y(0) \rangle
+(\mu-\frac K 2) \mathbb E\int_0^T |\hat\theta(t)|^2 e^{Kt} dt\\
\leq\ & \langle \Phi_1(y_2(0))-\Phi_2(y_2(0)),\ \hat y(0) \rangle
+\mathbb E\int_0^T \Big\langle
A_1(t,\theta_2(t))-A_2(t,\theta_2(t)), \hat \theta(t) \Big\rangle
e^{Kt} dt.
\end{aligned}
\end{equation}

(i) When $\Phi_1=\Phi_2$, considering the monotonicity condition on
$\Phi_1$ (see Assumption (H3.3)), \eqref{Sec3.1_Temp} is reduced to
\[
\begin{aligned}
& \mathbb E\Big[ \langle\hat x(T),\ \hat y(T)\rangle e^{KT} \Big]
+(\mu-\frac K 2) \mathbb E\int_0^T |\hat\theta(t)|^2 e^{Kt} dt\\
\leq\ & \mathbb E\int_0^T \Big\langle
A_1(t,\theta_2(t))-A_2(t,\theta_2(t)), \hat \theta(t) \Big\rangle
e^{Kt} dt.
\end{aligned}
\]
By a similar technique as that in the proof of Lemma
\ref{Sec3_Lemma_Continuation}, we obtain the estimate
\eqref{Sec3.1_L2_1}.

(ii) When $\Phi_1$ satisfies the strong monotonicity condition
\eqref{Sec3.1_StrongMonot}, the inequality \eqref{Sec3.1_Temp} is
reduced to
\[
\begin{aligned}
& \mathbb E\Big[ \langle\hat x(T),\ \hat y(T)\rangle e^{KT} \Big]
+\nu |\hat y(0)|^2
+(\mu-\frac K 2) \mathbb E\int_0^T |\hat\theta(t)|^2 e^{Kt} dt\\
\leq\ & \langle \Phi_1(y_2(0))-\Phi_2(y_2(0)),\ \hat y(0) \rangle
+\mathbb E\int_0^T \Big\langle
A_1(t,\theta_2(t))-A_2(t,\theta_2(t)), \hat \theta(t) \Big\rangle
e^{Kt} dt.
\end{aligned}
\]
Similar to the proof of Lemma \ref{Sec3_Lemma_Continuation}, we get
the estimate \eqref{Sec3.1_L2_2}.
\end{proof}

\begin{remark}
In the proof of the above proposition, it is easy to see, even if
the Lipschitz condition and the monotonicity conditions are not
satisfied by $(b_2,\sigma_2,\gamma_2,g,\Phi)$, the estimates
\eqref{Sec3.1_L2_1} and \eqref{Sec3.1_L2_2} still hold.
\end{remark}

We would like to point out that the $L^2$-estimate (see
\eqref{Sec3.1_L2_1} and \eqref{Sec3.1_L2_2}) for solutions of
forward-backward SDEs will play a key role in studying Pontryagin's
maximum principle and Bellman's dynamic programming principle for
stochastic optimal control and stochastic differential game problems
of infinite horizon forward-backward SDEs.

Next we prove a comparison theorem. As same as before, let $\theta_1
= (x_1,y_1,z_1,r_1)$ and $\theta_2 = (x_2,y_2,z_2,r_2)$ be the
solutions of \eqref{Sec3_FBSDE} with coefficients
$(b,\sigma,\gamma,g,\Phi_1)$ and $(b,\sigma,\gamma,g,\Phi_2)$
respectively. Denote
\[
\hat\theta := \theta_1-\theta_2 = (x_1-x_2,y_1-y_2,z_1-z_2,r_1-z_2)
=: (\hat x,\hat y,\hat z,\hat r).
\]
Similar to Lemma 7 in \cite{PengShi2000}, one can easily prove the
following lemma.

\begin{lemma}\label{Sec3.1_CT_Lemma}
Let Assumptions (H3.1)-(H3.4) holds for $(b,\sigma,\gamma,g,\Phi_1)$
and $(b,\sigma,\gamma,g,\Phi_2)$.
\begin{enumerate}[(i)]
\item For any $t\in [0,\infty)$, we have
\[
\langle \hat x(t),\ \hat y(t) \rangle \geq 0.
\]
\item If we define an $\mathbb F$-stopping time: $\tau = \inf \{ t\geq
0;\ \langle \hat x(t),\ \hat y(t) \rangle =0 \}$, then we further
have
\[
\hat\theta(t)\mathbbm{1}_{[\tau,\infty)}(t) = (\hat x(t),\hat
y(t),\hat z(t),\hat r(t,\cdot))\mathbbm{1}_{[\tau,\infty)}(t) =0.
\]
\end{enumerate}
\end{lemma}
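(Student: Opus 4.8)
The plan is to apply It\^o's formula to $\langle \hat x(t),\ \hat y(t) \rangle e^{Kt}$ over a finite interval $[0,T]$ and exploit the monotonicity structure of $A$ and the (merely) nonpositive monotonicity of $\Phi_1,\Phi_2$, exactly as in the proof of Proposition \ref{Sec3.1_Prop_Stability}. Writing $\hat\theta = \theta_1-\theta_2$ with $A(t,\theta_i)=(-g(t,\theta_i),b(t,\theta_i),\sigma(t,\theta_i),\gamma(t,\cdot,\theta_i))$, the cross term produced by the dynamics is $\langle A(t,\theta_1(t))-A(t,\theta_2(t)),\ \hat\theta(t)\rangle \le -\mu|\hat\theta(t)|^2$ by (H3.3)(i), while the boundary contribution at $t=0$ is $\langle \Phi_1(y_1(0))-\Phi_2(y_2(0)),\ \hat y(0)\rangle$. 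For part (i) I would first rewrite this boundary term: since $x_i(0)=\Phi_i(y_i(0))$, we have $\langle \hat x(0),\hat y(0)\rangle = \langle \Phi_1(y_1(0))-\Phi_2(y_2(0)),\ \hat y(0)\rangle$. The subtlety is that the two equations have \emph{different} initial coefficients $\Phi_1\ne\Phi_2$, so this cannot be bounded directly; instead one splits it as $\langle \Phi_1(y_1(0))-\Phi_1(y_2(0)),\ \hat y(0)\rangle + \langle \Phi_1(y_2(0))-\Phi_2(y_2(0)),\ \hat y(0)\rangle$ and... but in fact for the comparison theorem the relevant hypothesis (which I would expect to be stated, following Lemma 7 of \cite{PengShi2000}) is a pointwise ordering of the coefficients, e.g.\ $\Phi_1 \le \Phi_2$ componentwise or the analogous inner-product comparison, together with the monotonicity of $A$. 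So the first step is to reduce to showing that the running cross-term integral is nonpositive and the initial cross-term is nonnegative.

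Concretely, from It\^o's formula one gets
\[
\mathbb E\big[\langle\hat x(T),\hat y(T)\rangle e^{KT}\big] - \langle\hat x(0),\hat y(0)\rangle = \mathbb E\int_0^T \Big\langle A(t,\theta_1(t))-A(t,\theta_2(t)),\ \hat\theta(t)\Big\rangle e^{Kt}\,dt + K\,\mathbb E\int_0^T\langle \hat x(t),\hat y(t)\rangle e^{Kt}\,dt,
\]
so that, using (H3.3)(i),
\[
\langle\hat x(0),\hat y(0)\rangle + \Big(\mu-\tfrac K2\Big)\mathbb E\int_0^T|\hat\theta(t)|^2 e^{Kt}\,dt \le \mathbb E\big[\langle\hat x(T),\hat y(T)\rangle e^{KT}\big].
\]
Letting $T\to\infty$ and invoking Proposition \ref{Sec2_Prop}, Corollary \ref{Sec2_Corollary} and the fact that $\hat x,\hat y\in\mathcal X^K$ (so $\mathbb E[|\hat x(T)|^2 e^{KT}]\to 0$ and likewise for $\hat y$, hence the right side $\to 0$ by Cauchy--Schwarz), together with the sign of the initial term $\langle\hat x(0),\hat y(0)\rangle \ge 0$ coming from the comparison hypothesis on $\Phi_1,\Phi_2$ and $(2\mu-K)>0$ from (H3.4), one concludes $\mathbb E\int_0^T\langle\hat x(t),\hat y(t)\rangle e^{Kt}\,dt$ and the whole left side are controlled; but to get the \emph{pointwise} statement $\langle\hat x(t),\hat y(t)\rangle\ge 0$ for every $t$, rather than just an integrated inequality, I would instead apply the same It\^o computation on the interval $[t,T]$ for arbitrary $t$: this gives $\langle\hat x(t),\hat y(t)\rangle e^{Kt} + (\mu-\tfrac K2)\mathbb E\int_t^T|\hat\theta(s)|^2 e^{Ks}\,ds \le \mathbb E[\langle\hat x(T),\hat y(T)\rangle e^{KT}]\to 0$ — which would give $\langle\hat x(t),\hat y(t)\rangle\le 0$, the wrong sign. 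The resolution is that one must run It\^o on $[0,t]$ and use the initial condition: $\mathbb E[\langle\hat x(t),\hat y(t)\rangle e^{Kt}] = \langle\hat x(0),\hat y(0)\rangle + \mathbb E\int_0^t\langle A(\cdot,\theta_1)-A(\cdot,\theta_2),\hat\theta\rangle e^{Ks}ds + K\int_0^t\langle\hat x,\hat y\rangle e^{Ks}ds \le \langle\hat x(0),\hat y(0)\rangle + (\tfrac K2-\mu+\tfrac K2)\cdots$; combining with a Gronwall-type argument on $h(t):=\mathbb E[\langle\hat x(t),\hat y(t)\rangle e^{Kt}]$ shows $h$ stays nonnegative once $h(0)\ge 0$, since the only way $h$ could become negative is blocked by the dissipative term. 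This is the step I expect to be the main obstacle: organizing the It\^o identity so that both the initial sign and the asymptotic decay push in the right direction, essentially a two-sided estimate.

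For part (ii), define $\tau = \inf\{t\ge 0:\ \langle\hat x(t),\hat y(t)\rangle = 0\}$. On $\{\tau<\infty\}$ restart the equation from time $\tau$: by the strong Markov property and uniqueness (Theorem \ref{Sec3_THM}), $(\hat x,\hat y,\hat z,\hat r)$ restricted to $[\tau,\infty)$ solves a forward-backward system with the \emph{same} forward and backward coefficients $(b,\sigma,\gamma,g)$ for both, differing only through the realized value at $\tau$; applying the It\^o--monotonicity estimate of part (i) on $[\tau,T]$ with initial data $\hat x(\tau)$ gives $\mathbb E[\langle\hat x(\tau),\hat y(\tau)\rangle e^{K\tau}] + (\mu-\tfrac K2)\mathbb E\int_\tau^T |\hat\theta(s)|^2 e^{Ks}\,ds \le \mathbb E[\langle\hat x(T),\hat y(T)\rangle e^{KT}]$. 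Since $\langle\hat x(\tau),\hat y(\tau)\rangle = 0$ by definition of $\tau$ and the right-hand side tends to $0$ as $T\to\infty$, and $\mu-\tfrac K2>0$, we get $\mathbb E\int_\tau^\infty |\hat\theta(s)|^2 e^{Ks}\,ds = 0$, i.e.\ $\hat\theta(t)\mathbbm 1_{[\tau,\infty)}(t) = 0$ almost surely. The only delicate point here is the measurability/restart argument at the stopping time $\tau$, which is routine given the c\`adl\`ag paths; one notes $\langle\hat x,\hat y\rangle$ has right-continuous paths with left limits, so $\tau$ is a genuine $\mathbb F$-stopping time and $\langle\hat x(\tau),\hat y(\tau)\rangle = 0$ on $\{\tau<\infty\}$ by right-continuity combined with part (i). The remainder is exactly the scheme of Lemma 7 in \cite{PengShi2000}, so I would simply indicate that the proof is parallel.
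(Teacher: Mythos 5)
Your part (i) does not reach the stated conclusion, and the difficulty you ran into is self-inflicted. First, the differential inequality you wrote on $[t,T]$ has the wrong orientation: since $\langle A(s,\theta_1)-A(s,\theta_2),\hat\theta\rangle\le-\mu|\hat\theta|^2$ and $K\langle\hat x,\hat y\rangle\le\frac K2|\hat\theta|^2$, the quantity $\mathbb E[\langle\hat x(s),\hat y(s)\rangle e^{Ks}]$ is \emph{non-increasing}, so the correct inequality is $\mathbb E[\langle\hat x(T),\hat y(T)\rangle e^{KT}]+(\mu-\frac K2)\mathbb E\int_t^T|\hat\theta|^2e^{Ks}ds\le\mathbb E[\langle\hat x(t),\hat y(t)\rangle e^{Kt}]$ — the time-$t$ term sits on the \emph{large} side, and letting $T\to\infty$ already gives the \emph{right} sign. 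Your claim that this route yields ``$\le 0$'' is a sign error, and it is what drives you into the unnecessary Gronwall detour and, worse, into requiring $\langle\hat x(0),\hat y(0)\rangle\ge 0$ via an ordering hypothesis on $\Phi_1,\Phi_2$. No such hypothesis appears in the lemma and none is needed: the paper's proof of (i) never touches the initial condition at all. Second, and more fundamentally, even with the sign corrected your argument only controls $\mathbb E[\langle\hat x(t),\hat y(t)\rangle]$, whereas the lemma asserts the \emph{pointwise} (almost sure) inequality. The missing idea is to apply It\^o's formula to the unweighted product $\langle\hat x(s),\hat y(s)\rangle$ on $[t,T_i]$ and take \emph{conditional} expectation given $\mathcal F_t$: then $\langle\hat x(t),\hat y(t)\rangle$ is $\mathcal F_t$-measurable and comes out of the conditional expectation as a random variable, giving $\langle\hat x(t),\hat y(t)\rangle\ge\mathbb E[\langle\hat x(T_i),\hat y(T_i)\rangle\,|\,\mathcal F_t]+\mu\,\mathbb E[\int_t^{T_i}|\hat\theta|^2ds\,|\,\mathcal F_t]$, and the first term vanishes along a suitable sequence $T_i\to\infty$ by the decay furnished by Proposition \ref{Sec2_Prop} and Corollary \ref{Sec2_Corollary}. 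Your proposal never performs this conditioning, so the pointwise statement is not established.

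Part (ii) is essentially the right scheme and matches the paper's (work on the stochastic interval $[\tau\wedge T,T]$, use $\langle\hat x(\tau\wedge T),\hat y(\tau\wedge T)\rangle=0$ on $\{\tau\le T\}$ together with the nonnegativity from (i) to kill the boundary terms, and conclude $\mathbb E\int_{\tau\wedge T}^T|\hat\theta|^2ds=0$), but as written it inherits the same inverted inequality, should use $\tau\wedge T$ rather than $\tau$ so the interval is well defined on $\{\tau>T\}$, and does not need any strong Markov or restart argument — the coefficients are merely progressively measurable, so the system is not Markovian, and plain optional stopping in It\^o's formula suffices. Note also that the paper dispenses with the exponential weight entirely here, combining the pathwise inequality $\langle\hat x(T),\hat y(T)\rangle-\langle\hat x(\tau\wedge T),\hat y(\tau\wedge T)\rangle\ge 0$ from (i) with the expectation bound $\le-\mu\,\mathbb E\int_{\tau\wedge T}^T|\hat\theta|^2ds$, which is cleaner.
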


\begin{proof}
(i) Clearly, for any $t\in [0,\infty)$, there exists a sequence of
times $\{T_i\}_{i=1}^\infty$, which increases and diverges as
$i\rightarrow\infty$, such that
\[
\lim_{i\rightarrow\infty} \mathbb E\Big[ \langle \hat x(T_i),\ \hat
y(T_i) \rangle\ \Big|\ \mathcal F_t \Big] =0.
\]
We apply It\^{o}'s formula to $\langle \hat x(s),\ \hat y(s)
\rangle$ on the interval $[t,T_i]$ to have
\[
\begin{aligned}
\mathbb E\Big[ \langle \hat x(T_i),\ \hat y(T_i) \rangle\ \Big|\
\mathcal F_t \Big] -\langle \hat x(t),\ \hat y(t) \rangle  =\ &
\mathbb E\bigg[ \int_t^{T_i} \Big\langle A(s,\theta_1(s))
-A(s,\theta_2(s)),\ \hat\theta(s) \Big\rangle ds\ \bigg|\ \mathcal
F_t \bigg]\\
\leq\ & -\mu \mathbb E \bigg[ \int_t^{T_i} |\hat \theta(s)|^2 ds\
\bigg|\ \mathcal F_t \bigg].
\end{aligned}
\]
Then, letting $i\rightarrow\infty$, we have
\[
\langle \hat x(t),\ \hat y(t) \rangle \geq
\lim_{i\rightarrow\infty}\mu \mathbb E \bigg[ \int_t^{T_i} |\hat
\theta(s)|^2 ds\ \bigg|\ \mathcal F_t \bigg] \geq 0.
\]

(ii) For the given $\mathbb F$-stopping time $\tau$, it is easy to
see, for any $T\in [0,\infty)$,
\[
\langle \hat x(T),\ \hat y(T) \rangle -\langle \hat x(\tau\wedge
T),\ \hat y(\tau\wedge T) \rangle \geq 0.
\]
On the other hand, from It\^{o}'s formula,
\[
\begin{aligned}
\mathbb E\Big[\langle \hat x(T),\ \hat y(T) \rangle -\langle \hat
x(\tau\wedge T),\ \hat y(\tau\wedge T) \rangle\Big] =\ & \mathbb
E\int_{\tau\wedge T}^T \Big\langle
A(s,\theta_1(s))-A(s,\theta_2(s)),\ \hat \theta(s) \Big\rangle ds\\
\leq\ & -\mu  \mathbb E\int_{\tau\wedge T}^T |\hat\theta(s)|^2 ds
\leq 0.
\end{aligned}
\]
Then, by the above two inequalities, we obtain
\[
\hat\theta(s)\mathbbm{1}_{[\tau\wedge T,T]}(t) =0.
\]
Due to the arbitrariness of $T$, we get the desired conclusion
$\hat\theta(s)\mathbbm{1}_{[\tau,\infty)}(t) =0$.
\end{proof}

\begin{theorem}\label{Sec3.1_THM_Comparison}
Let $n=1$. Let Assumptions (H3.1)-(H3.4) hold for
$(b,\sigma,\gamma,g,\Phi_1)$ and $(b,\sigma,\gamma,g,\Phi_2)$.
\begin{enumerate}[(i)]
\item If $\Phi_1(y_2(0))>\Phi_2(y_2(0))$, then $\hat y(0)>0$.
\item If $\Phi_1(y_2(0))=\Phi_2(y_2(0))$, then $\hat y(0)=0$.
\end{enumerate}
\end{theorem}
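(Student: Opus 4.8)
The plan is to exploit the inner-product monotonicity identity from Lemma~\ref{Sec3.1_CT_Lemma} together with the structure of the backward SDE satisfied by $\hat y$, sharpening the non-negativity $\langle\hat x(t),\hat y(t)\rangle\ge 0$ into a strict sign statement at $t=0$ when the two initial coefficients are strictly ordered. Since $n=1$, $\hat x$ and $\hat y$ are scalar processes, so $\langle\hat x(t),\hat y(t)\rangle = \hat x(t)\hat y(t)\ge 0$ means $\hat x$ and $\hat y$ have the same sign at every time; in particular, at $t=0$, $\hat x(0)=\Phi_1(y_1(0))-\Phi_2(y_2(0))$ and $\hat y(0)$ have the same sign. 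The first thing I would do is write $\hat x(0)=\bigl(\Phi_1(y_1(0))-\Phi_1(y_2(0))\bigr)+\bigl(\Phi_1(y_2(0))-\Phi_2(y_2(0))\bigr)$ and use the monotonicity of $\Phi_1$ (assumption (H3.3)-(ii)): in the scalar case this gives $\bigl(\Phi_1(y_1(0))-\Phi_1(y_2(0))\bigr)\hat y(0)\le 0$, i.e.\ $\Phi_1(y_1(0))-\Phi_1(y_2(0))$ and $\hat y(0)$ have opposite signs (or one is zero).

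For part~(ii), suppose $\Phi_1(y_2(0))=\Phi_2(y_2(0))$, so $\hat x(0)=\Phi_1(y_1(0))-\Phi_1(y_2(0))$. Combining $\hat x(0)\hat y(0)\ge 0$ with $\bigl(\Phi_1(y_1(0))-\Phi_1(y_2(0))\bigr)\hat y(0)\le 0$ forces $\hat x(0)\hat y(0)=0$, hence $\hat y(0)\cdot\bigl(\Phi_1(y_1(0))-\Phi_1(y_2(0))\bigr)=0$ and also $|\hat y(0)|^2\le 0$ via the chain $|\hat y(0)|^2 = \hat y(0)^2$... — more carefully, from $\hat x(0)=\Phi_1(y_1(0))-\Phi_1(y_2(0))$ and $\hat x(0)\hat y(0)\ge0$ and $\hat x(0)\hat y(0)\le 0$ we get $\hat x(0)\hat y(0)=0$; if $\hat y(0)\ne 0$ then $\hat x(0)=0$, but then $\tau=0$ in Lemma~\ref{Sec3.1_CT_Lemma}(ii), so $\hat\theta(t)\equiv 0$ on $[0,\infty)$, contradicting $\hat y(0)\ne 0$. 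Hence $\hat y(0)=0$.

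For part~(i), assume $\Phi_1(y_2(0))>\Phi_2(y_2(0))$. Suppose for contradiction that $\hat y(0)\le 0$. If $\hat y(0)=0$, then $\hat x(0)\hat y(0)=0$; but I claim $\hat x(0)>0$ in that case, namely $\hat x(0)=\Phi_1(y_1(0))-\Phi_2(y_2(0))=\bigl(\Phi_1(y_1(0))-\Phi_1(y_2(0))\bigr)+\bigl(\Phi_1(y_2(0))-\Phi_2(y_2(0))\bigr)$, and since $y_1(0)=y_2(0)$ the first bracket vanishes while the second is strictly positive, so $\hat x(0)>0$; then $\tau=\inf\{t:\hat x(t)\hat y(t)=0\}$ satisfies $\tau>0$ by right-continuity, and applying Itô to $\hat x(t)\hat y(t)$ on $[0,\tau\wedge T]$ with the strict monotonicity constant $\mu$ gives $\hat x(0)\hat y(0)\ge \mu\,\mathbb E\!\int_0^{\tau\wedge T}|\hat\theta(s)|^2\,ds + \mathbb E[\hat x(\tau\wedge T)\hat y(\tau\wedge T)]\ge 0$, forcing $\hat x(0)\hat y(0)=\hat y(0)=0$ only if this is consistent — in fact $\hat x(0)\hat y(0)=0$ with $\hat x(0)>0$ again gives $\hat y(0)=0$ but then $\hat\theta\equiv 0$, so $\hat x(0)=0$, contradiction. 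If instead $\hat y(0)<0$, then $\hat x(0)\le 0$ by the sign-agreement $\hat x(0)\hat y(0)\ge0$... wait, $\hat x(0)\hat y(0)\ge 0$ with $\hat y(0)<0$ forces $\hat x(0)\le 0$; but also $\hat x(0)=\bigl(\Phi_1(y_1(0))-\Phi_1(y_2(0))\bigr)+\bigl(\Phi_1(y_2(0))-\Phi_2(y_2(0))\bigr)$ where the first term is $\ge 0$ (since $\Phi_1$ monotone and $\hat y(0)<0$ gives $\Phi_1(y_1(0))-\Phi_1(y_2(0))\ge 0$... the scalar monotonicity $(\Phi_1(a)-\Phi_1(b))(a-b)\le 0$ with $a-b=\hat y(0)<0$ indeed gives $\Phi_1(y_1(0))-\Phi_1(y_2(0))\ge 0$) and the second term is $>0$, so $\hat x(0)>0$, contradicting $\hat x(0)\le 0$. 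In all cases the assumption $\hat y(0)\le 0$ fails, so $\hat y(0)>0$.

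I expect the main obstacle to be bookkeeping the three-way sign interaction cleanly: the inequality $\hat x(0)\hat y(0)\ge 0$ from Lemma~\ref{Sec3.1_CT_Lemma}(i), the scalar monotonicity $\bigl(\Phi_1(y_1(0))-\Phi_1(y_2(0))\bigr)\hat y(0)\le 0$, and the strict ordering of the initial data; the real content is the ``collapse'' step that whenever $\hat x(0)=0$ (equivalently $\tau=0$) Lemma~\ref{Sec3.1_CT_Lemma}(ii) kills the whole difference process, which I would invoke to rule out the boundary cases. All Itô computations needed are already carried out in the proof of Lemma~\ref{Sec3.1_CT_Lemma}, so no new estimate is required.
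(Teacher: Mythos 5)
Your argument is correct and follows essentially the same route as the paper: the decomposition $\hat x(0)=\bigl(\Phi_1(y_1(0))-\Phi_1(y_2(0))\bigr)+\bigl(\Phi_1(y_2(0))-\Phi_2(y_2(0))\bigr)$ combined with $\hat x(0)\hat y(0)\ge 0$ from Lemma~\ref{Sec3.1_CT_Lemma}(i), the scalar monotonicity of $\Phi_1$, and the collapse of $\hat\theta$ via $\tau=0$ in Lemma~\ref{Sec3.1_CT_Lemma}(ii) is exactly the paper's proof. The only blemish is the spurious aside ``$\tau>0$ by right-continuity'' in case (i) with $\hat y(0)=0$ --- there $\hat x(0)\hat y(0)=0$ forces $\tau=0$, which is precisely the fact you end up (correctly) using to conclude $\hat\theta\equiv 0$ and reach the contradiction.
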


\begin{proof}
When $n=1$, Lemma \ref{Sec3.1_CT_Lemma}-(i) is read as $\hat
x(t)\hat y(t)\geq 0$ for any $t\in [0,\infty)$. Especially, taking
$t=0$, we have $\hat x(0)\hat y(0) \geq 0$. Then from the
monotonicity condition on $\Phi_1$,
\begin{equation}\label{Sec3.1_CT_Lemma_Temp}
\begin{aligned}
0 \leq\ & \Big( \Phi_1(y_1(0))-\Phi_2(y_2(0)) \Big)\hat y(0)\\
=\ & \Big[ \Big( \Phi_1(y_1(0))-\Phi_1(y_2(0)) \Big) +\Big(
\Phi_1(y_2(0))-\Phi_2(y_2(0)) \Big) \Big] \hat y(0)\\
\leq\ & \Big( \Phi_1(y_2(0))-\Phi_2(y_2(0)) \Big) \hat y(0).
\end{aligned}
\end{equation}

(i) If $\Phi_1(y_2(0))>\Phi_2(y_2(0))$, then
\eqref{Sec3.1_CT_Lemma_Temp} implies $\hat y(0)\geq 0$. Moreover, if
$\hat y(0) =0$, then the stopping time $\tau$ defined in Lemma
\ref{Sec3.1_CT_Lemma}-(ii) is equal to $0$. By Lemma
\ref{Sec3.1_CT_Lemma}-(ii), we have $\hat x(0) =0$. Since $y_1(0) =
y_2(0)$, then
\[
\Phi_1(y_2(0)) -\Phi_2(y_2(0)) = \Phi_1(y_1(0)) -\Phi_2(y_2(0)) =
\hat x(0) =0.
\]
This is a contradiction. Therefore, in this case we must have $\hat
y(0) > 0$.

(ii) When $\Phi_1(y_2(0)) = \Phi_2(y_2(0))$, we also use a framework
of reduction to absurdity to show $\hat y(0)=0$. We assume that
$\hat y(0)\neq 0$. From \eqref{Sec3.1_CT_Lemma_Temp}, we deduce that
\[
\hat x(0) = \Phi_1(y_1(0)) -\Phi_2(y_2(0))= 0.
\]
By Lemma \ref{Sec3.1_CT_Lemma}-(ii) once again, we have $\hat y(0)
=0$. We obtain a contradiction, and then finish the proof.
\end{proof}

\section{Application to backward stochastic LQ problems}\label{Sec4}

In this section, we apply the solvability result of forward-backward
SDEs studied in the above section to deal with two kinds of backward
stochastic linear-quadratic (LQ) problems with jumps, including an
LQ stochastic optimal control (SOC) problem and an LQ nonzero-sum
stochastic differential game (NZSSDG) problem.

Firstly, for the control problem, the system is given by the
following controlled linear backward SDE on the infinite interval
$[0,\infty)$:
\begin{equation}\label{Sec4.1_Sys}
\begin{aligned}
-dy(t) =\ & \left[ A(t)y(t) +\sum_{i=1}^d B_i(t)z_i(t) +\sum_{i=1}^l
\int_{\mathcal E} C_i(t,e) r_i(t,e) \pi_i(de) +D(t)v(t) +\alpha(t)
\right]dt\\
& -\sum_{i=1}^d z_i(t) dW_i(t) -\sum_{i=1}^l \int_{\mathcal E}
r_i(t,e) \tilde N_i(dt,de),
\end{aligned}
\end{equation}
where $A$, $B_i$ ($i=1,2,\dots,d$), $D$ are $\mathbb
F$-progressively measurable, matrix-valued, bounded processes with
appropriate dimensions; $C_i$ ($i=1,2,\dots, l$) is a $\mathcal
P\otimes \mathcal B(\mathcal E)$-measurable, ($n\times n$)
matrix-valued process such that $\int_{\mathcal E} |C_i(t,e)|^2
\pi_i(de)$ is uniformly bounded for any $(\omega,t)\in \Omega\times
[0,\infty)$; and the nonhomogeneous term $\alpha\in L^{2,K}_{\mathbb
F}(0,\infty;\mathbb R^n)$ where $K>0$ is a constant. The admissible
control set is defined by
\[
\begin{aligned}
\mathcal V := \Big\{ & v \in L^{2,K}_{\mathbb F}(0,\infty;\mathbb
R^k)\ \Big|\ \mbox{with respect to $v$, \eqref{Sec4.1_Sys} admits a unique solution}\\
& (y^v,z^v,r^v)\in L^{2,K}_{\mathbb F}(0,\infty;\mathbb R^n) \times
L^{2,K}_{\mathbb F}(0,\infty;\mathbb R^{n\times d}) \times
M^{2,K}_{\mathbb F}(0,\infty;\mathbb R^{n\times l})  \Big\},
\end{aligned}
\]
in which each element $v$ is called an admissible control, and
$(y^v,z^v,r^v)$ is called the state trajectory corresponding to $v$.
In what follows, we will show the admissible control set $\mathcal
V$ is nonempty under some suitable conditions. In addition, we are
given a cost functional associated with $v$ in a quadratic form:
\begin{equation}\label{Sec4.1_Cost}
\begin{aligned}
J(v(\cdot)) =\ & \frac 1 2 \langle Qy(0),\ y(0) \rangle + \frac 1 2
\mathbb E\int_0^\infty \bigg[ \langle L(t)y(t),\ y(t) \rangle
+\sum_{i=1}^d \langle M_i(t)z_i(t),\ z_i(t)
\rangle\\
& +\sum_{i=1}^l \int_{\mathcal E} \langle S_i(t,e)r_i(t,e),\
r_i(t,e) \rangle \pi_i(de) +\langle R(t)v(t),\ v(t) \rangle
\bigg]dt,
\end{aligned}
\end{equation}
where $Q$ is an ($n\times n$) symmetric and positive semi-definite
matrix; $L$, $M_i$ ($i=1,2,\dots,d$) are $\mathbb F$-progressively
measurable, ($n\times n$) symmetric and positive semi-definite
matrix-valued, bounded processes; $S_i$ ($i=1,2,\dots,l$) is a
$\mathcal P\otimes\mathcal B(\mathcal E)$-measurable, ($n\times n$)
symmetric and positive semi-definite matrix-valued, bounded process;
$R$ is an $\mathbb F$-progressively measurable, ($k\times k$)
symmetric and positive definite matrix-valued, bounded process.
Moreover, $R^{-1}$ is also bounded.\\

\noindent{\bf Problem (SOC).} The problem is to find an admissible
control $u\in\mathcal V$ such that
\begin{equation}
J(u(\cdot)) = \inf_{v(\cdot)\in\mathcal V} J(v(\cdot)).
\end{equation}
Such an admissible control $u$ is called an optimal control, and
$(y,z,r) := (y^u,z^u,r^u)$ is called the corresponding optimal state
trajectory.\\

The following result links Problem (SOC) to a forward-backward SDE.

\begin{lemma}\label{Sec4.1_Lemma}
If the following forward-backward SDE:
\begin{equation}\label{Sec4.1_FBSDE_SOC}
\left\{
\begin{aligned}
dx(t) =\ & \Big[ A^\top(t)x(t) -L(t)y(t) \Big] dt +\sum_{i=1}^d
\Big[ B_i^\top(t)x(t) -M_i(t)z_i(t) \Big]dW_i(t)\\
& +\sum_{i=1}^l \int_{\mathcal E} \Big[ C^\top_i(t,e)x(t-) -S_i(t,e)
r_i(t,e) \Big] \tilde N_i(dt,de),\\
-dy(t) =\ & \bigg[ D(t)R^{-1}(t)D^\top(t)x(t) +A(t)y(t)
+\sum_{i=1}^d B_i(t)z_i(t)\\
& +\sum_{i=1}^l\int_{\mathcal E} C_i(t,e)r_i(t,e) \pi_i(de)
+\alpha(t) \bigg] dt -\sum_{i=1}^d z_i(t) dW_i(t)\\
& -\sum_{i=1}^l\int_{\mathcal E} r_i(t,e) \tilde N_i(dt,de),\\
x(0) =\ & -Q y(0)
\end{aligned}
\right.
\end{equation}
admits a solution $(x,y,z,r)\in \mathcal L^{2,K}_{\mathbb
F}(0,\infty)$, then
\begin{equation}\label{Sec4.1_OptimalControl}
u(t) = R^{-1}(t) D^\top(t) x(t),\quad t\in [0,\infty),
\end{equation}
provides an optimal control of Problem (SOC). Moreover the optimal
control is unique.
\end{lemma}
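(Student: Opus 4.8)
The plan is to run the classical completion-of-squares (convexity) argument that is standard for LQ problems once the maximum-principle system has been written down. First I would check that the proposed control $u$ in \eqref{Sec4.1_OptimalControl} is admissible: since $x\in L^{2,K}_{\mathbb F}(0,\infty;\mathbb R^n)$ and $R^{-1},D$ are bounded, $u\in L^{2,K}_{\mathbb F}(0,\infty;\mathbb R^k)$, and the pair $(y,z,r)$ read off from the solution of \eqref{Sec4.1_FBSDE_SOC} solves the controlled backward SDE \eqref{Sec4.1_Sys} with $v=u$ and lies in the required space; uniqueness of that solution for the fixed input $u$ follows from the backward SDE theory (Theorem \ref{Sec2_THM}) applied to the linear, Lipschitz, weakly monotone driver of \eqref{Sec4.1_Sys}. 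Hence $u\in\mathcal V$ (so in particular $\mathcal V\neq\emptyset$) and $(y,z,r)$ is its state trajectory.

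Next, fix an arbitrary $v\in\mathcal V$ with state $(y^v,z^v,r^v)$ and set $\hat y=y^v-y$, $\hat z=z^v-z$, $\hat r=r^v-r$, $\hat v=v-u$; then $(\hat y,\hat z,\hat r)$ solves the homogeneous version of \eqref{Sec4.1_Sys} forced by $D\hat v$. I would apply It\^o's formula to $\langle x(t),\hat y(t)\rangle$ on $[0,T]$, substitute the two equations of \eqref{Sec4.1_FBSDE_SOC}, and use the self-adjointness identities $\langle A^\top x,\hat y\rangle=\langle x,A\hat y\rangle$, $\langle B_i^\top x,\hat z_i\rangle=\langle x,B_i\hat z_i\rangle$, $\langle C_i^\top x,\hat r_i\rangle=\langle x,C_i\hat r_i\rangle$: all the transport terms cancel in pairs, the control term reduces via $D^\top x=Ru$, and after letting $T\to\infty$ and inserting $x(0)=-Qy(0)$ one obtains
\[
\langle Qy(0),\hat y(0)\rangle+\mathbb E\int_0^\infty\Big[\langle Ly,\hat y\rangle+\sum_{i=1}^d\langle M_iz_i,\hat z_i\rangle+\sum_{i=1}^l\int_{\mathcal E}\langle S_ir_i,\hat r_i\rangle\pi_i(de)+\langle Ru,\hat v\rangle\Big]dt=0.
\]

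Then I would expand $J(v)-J(u)$ using the elementary identity $\langle Pa,a\rangle-\langle Pb,b\rangle=2\langle Pb,a-b\rangle+\langle P(a-b),a-b\rangle$ for each symmetric coefficient $P\in\{Q,L,M_i,S_i,R\}$. The part of the expansion that is linear in the hatted variables is exactly twice the left-hand side of the identity above, hence vanishes, leaving
\[
J(v)-J(u)=\frac12\Big[\langle Q\hat y(0),\hat y(0)\rangle+\mathbb E\int_0^\infty\Big(\langle L\hat y,\hat y\rangle+\sum_{i=1}^d\langle M_i\hat z_i,\hat z_i\rangle+\sum_{i=1}^l\int_{\mathcal E}\langle S_i\hat r_i,\hat r_i\rangle\pi_i(de)+\langle R\hat v,\hat v\rangle\Big)dt\Big]\ge 0,
\]
since $Q,L,M_i,S_i$ are positive semi-definite and $R$ positive definite; thus $u$ is optimal. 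For uniqueness, if $v$ is also optimal then this right-hand side is zero, forcing $\mathbb E\int_0^\infty\langle R\hat v,\hat v\rangle dt=0$; boundedness of $R^{-1}$ gives $\langle R\hat v,\hat v\rangle\ge c|\hat v|^2$ for some $c>0$, so $\hat v=0$, i.e. $v=u$.

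The step I expect to be the main (though essentially routine) obstacle is rigorously justifying the passage $T\to\infty$ in the It\^o identity: one must show $\mathbb E[\langle x(T),\hat y(T)\rangle]\to 0$ and that the martingale parts contribute nothing in expectation. The first follows from the decay estimates of Proposition \ref{Sec2_Prop} and Corollary \ref{Sec2_Corollary}, which give $\mathbb E[|x(T)|^2e^{KT}]\to 0$ and $\mathbb E[|\hat y(T)|^2e^{KT}]\to 0$, whence $|\mathbb E[\langle x(T),\hat y(T)\rangle]|\le\big(\mathbb E|x(T)|^2\big)^{1/2}\big(\mathbb E|\hat y(T)|^2\big)^{1/2}\to 0$; and since $K>0$, membership of all processes in the $L^{2,K}$-spaces implies plain $L^2(0,\infty)$-integrability, so every integrand appearing above is absolutely integrable and the use of It\^o's formula together with the finiteness of $J$ on $\mathcal V$ is legitimate.
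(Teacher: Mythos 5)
Your argument is essentially identical to the paper's: the same duality identity obtained by applying It\^{o}'s formula to $\langle x(t),\ y^v(t)-y(t)\rangle$ (the paper's $\Lambda=0$), the same completion-of-squares decomposition of $J(v)-J(u)$, and the same positive-definiteness argument for uniqueness, with your treatment of the limit $T\to\infty$ being a welcome elaboration of a step the paper glosses over. The only caveat concerns your extra admissibility check: invoking Theorem \ref{Sec2_THM} to get uniqueness of the state for $v=u$ requires (H2.3)--(H2.4) for the driver of \eqref{Sec4.1_Sys}, i.e.\ a dissipativity/smallness condition on $A,B_i,C_i$ relative to $K$ that is not among the hypotheses of this lemma (the paper simply leaves $u\in\mathcal V$ implicit), so that step should either be stated as an additional assumption or deferred to where the stronger structural conditions are in force.
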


\begin{proof}
First, we prove that $u$ defined by \eqref{Sec4.1_OptimalControl} is
an optimal control for Problem (SOC). For each $v\in\mathcal V$, the
corresponding state trajectory is denoted by $(y^v,z^v,r^v)$. Let us
consider the difference of $J(v(\cdot))$ and $J(u(\cdot))$ (the
argument $(t,e)$ is suppressed):
\begin{equation}\label{Sec4.1_Temp}
\begin{aligned}
J(v(\cdot))-J(u(\cdot)) =\ & \frac 1 2 \Big[ \langle Qy^v(0),\
y^v(0) \rangle -\langle Qy(0),\ y(0) \rangle \Big]\\
& +\frac 1 2 \mathbb E\int_0^\infty \bigg\{ \Big[ \langle Ly^v,\ y^v
\rangle -\langle Ly,\ y \rangle \Big] +\sum_{i=1}^d \Big[ \langle
M_i z^v_i,\ z^v_i \rangle -\langle M_iz_i,\ z_i \rangle \Big]\\
& +\sum_{i=1}^l \int_{\mathcal E} \Big[ \langle S_ir_i^v,\
r_i^v\rangle -\langle S_ir_i,\ r_i \rangle \Big] \pi_i(de) +\Big[
\langle Rv,\ v \rangle -\langle Ru,\ u \rangle \Big] \bigg\} dt\\
=\ & \frac 1 2 \langle Q(y^v(0)-y(0)),\ y^v(0)-y(0) \rangle\\
& +\frac 1 2 \mathbb E\int_0^\infty \bigg\{ \langle L(y^v-y),\ y^v-y
\rangle +\sum_{i=1}^d \langle M_i(z^v_i-z_i),\ z^v_i-z_i \rangle\\
& +\sum_{i=1}^l \int_{\mathcal E} \langle S_i(r_i^v-r_i),\ r_i^v-r_i
\rangle \pi_i(de) +\langle R(v-u),\ v-u \rangle \bigg\} dt +\Lambda,
\end{aligned}
\end{equation}
where
\[
\begin{aligned}
\Lambda =\ & \langle Qy(0),\ y^v(0)-y(0) \rangle +\mathbb
E\int_0^\infty \bigg\{ \langle Ly,\ y^v-y \rangle +\sum_{i=1}^d
\langle M_iz_i,\ z_i^v-z_i \rangle\\
& +\sum_{i=1}^l \int_{\mathcal E} \langle S_ir_i,\ r_i^v-r_i \rangle
\pi_i(de) +\langle Ru,\ v-u \rangle \bigg\} dt.
\end{aligned}
\]
Applying It\^{o}'s formula to $\langle x(t),\ y^v(t)-y(t) \rangle$
on the interval $[0,T]$, by the initial condition of $x$ (see
\eqref{Sec4.1_FBSDE_SOC}) and the definition of $u$ (see
\eqref{Sec4.1_OptimalControl}), we have
\[
\begin{aligned}
& \mathbb E\Big[\langle x(T),\ y^v(T)-y(T) \rangle\Big] +\langle
Qy(0),\ y^v(0)-y(0) \rangle\\
=\ & -\mathbb E\int_0^T \bigg\{ \langle Ly,\ y^v-y \rangle
+\sum_{i=1}^d \langle M_iz_i,\ z_i^v-z_i \rangle +\sum_{i=1}^l
\int_{\mathcal E}\langle S_ir_i,\ r_i^v-r_i \rangle\pi_i(de)
+\langle Ru,\ v-u \rangle \bigg\} dt.
\end{aligned}
\]
Letting $T\rightarrow\infty$, we get $\Lambda =0$. Then, since $Q$,
$L$, $M_i$ ($i=1,2,\dots,d$), $S_i$ ($i=1,2,\dots,l$) are positive
semi-definite, and $R$ is positive definite, we have
$J(v(\cdot))-J(u(\cdot))\geq 0$. Due to the arbitrariness of $v$, we
prove that $u$ defined by \eqref{Sec4.1_OptimalControl} is an
optimal control.

For the uniqueness, besides $u$ given by
\eqref{Sec4.1_OptimalControl}, let $\bar u\in\mathcal V$ be another
optimal control, and denote by $(y^{\bar u},z^{\bar u},r^{\bar u})$
the corresponding optimal state trajectory. Obviously $J(\bar
u(\cdot)) = J(u(\cdot))$. Coming back to \eqref{Sec4.1_Temp}, we
have
\[
\begin{aligned}
0 =\ & \frac 1 2 \langle Q(y^{\bar u}(0)-y(0)),\ y^{\bar u}(0)-y(0) \rangle\\
& +\frac 1 2 \mathbb E\int_0^\infty \bigg\{ \langle L(y^{\bar
u}-y),\ y^{\bar u}-y
\rangle +\sum_{i=1}^d \langle M_i(z^{\bar u}_i-z_i),\ z^{\bar u}_i-z_i \rangle\\
& +\sum_{i=1}^l \int_{\mathcal E} \langle S_i(r_i^{\bar u}-r_i),\
r_i^{\bar u}-r_i \rangle \pi_i(de) +\langle R(\bar u-u),\ \bar u-u
\rangle \bigg\} dt \\
\geq\ & \frac 1 2 \mathbb E\int_0^\infty \langle R(\bar u-u),\ \bar
u-u \rangle dt.
\end{aligned}
\]
Because $R$ is positive definite, we get $\bar u(\cdot) = u(\cdot)$.
We have proved the uniqueness of the optimal control.
\end{proof}

In order to obtain the solvability of \eqref{Sec4.1_FBSDE_SOC}, we
assume the following assumptions:
\begin{enumerate}[(\mbox{A1}.1)]
\item There exists a constant $\mu>0$ such that for any $(\omega,t,e)\in \Omega\times
[0,\infty)\times\mathcal E$, any $i=1,2,\dots,d$, any
$j=1,2,\dots,l$,
\[
D(t)R^{-1}(t)D^\top(t) \geq \mu I,\quad L(t) \geq \mu I,\quad
M_i(t)\geq \mu I, \quad S_j(t,e)\geq \mu I.
\]
\item $2\mu-K\geq 0$.
\end{enumerate}

\noindent Here, $I$ denotes the ($n\times n$) identity matrix and
the expression $A\geq B$ means $A-B$ is positive semi-definite as
usual. We notice that, in the viewpoint of Remark
\ref{Sec3_Remark_Artificial}, Assumption (A1.2) is artificial. If it
does not hold true, then we can consider Problem (SOC) in some
larger space. However, for the convenience of presentation, we keep
it here.

\begin{theorem}\label{Sec4.1_THM}
Under Assumptions (A1.1)-(A1.2), the forward-backward SDE
\eqref{Sec4.1_FBSDE_SOC} admits a unique solution $(x,y,z,r) \in
\mathcal L^{2,K}_{\mathbb F}(0,\infty)$. Moreover, $u$ defined by
\eqref{Sec4.1_OptimalControl} is the unique optimal control for
Problem (SOC).
\end{theorem}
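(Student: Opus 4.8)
The plan is to verify that the forward-backward SDE \eqref{Sec4.1_FBSDE_SOC} falls within the scope of Theorem \ref{Sec3_THM}, and then invoke Lemma \ref{Sec4.1_Lemma} to conclude optimality. First I would identify the coefficients: reading off \eqref{Sec4.1_FBSDE_SOC}, the generator $A(t,\theta)=(-g(t,\theta),b(t,\theta),\sigma(t,\theta),\gamma(t,\cdot,\theta))$ is linear in $\theta=(x,y,z,r(\cdot))$, with $b(t,\theta)=A^\top(t)x-L(t)y$, $\sigma_i(t,\theta)=B_i^\top(t)x-M_i(t)z_i$, $\gamma_i(t,e,\theta)=C_i^\top(t,e)x-S_i(t,e)r_i(e)$, and $-g(t,\theta)=-D(t)R^{-1}(t)D^\top(t)x-A(t)y-\sum_i B_i(t)z_i-\sum_i\int_{\mathcal E}C_i(t,e)r_i(e)\pi_i(de)$, while the initial coupling is $\Phi(y)=-Qy$ and the nonhomogeneous term is $\alpha\in L^{2,K}_{\mathbb F}(0,\infty;\mathbb R^n)$. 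I would then check the four hypotheses (H3.1)--(H3.4) in turn.

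The measurability and integrability in (H3.1) follow because all matrix processes are $\mathbb F$-progressively measurable (resp.\ $\mathcal P\otimes\mathcal B(\mathcal E)$-measurable) and $A(\cdot,0)=(\alpha(\cdot),0,0,0)\in\mathcal L^{2,K}_{\mathbb F}(0,\infty)$ by the assumption on $\alpha$; the Lipschitz bound (H3.2) is immediate from the boundedness of $A,B_i,D,R^{-1},L,M_i$ and the uniform bound on $\int_{\mathcal E}|C_i(t,e)|^2\pi_i(de)$ and on $S_i$, which also gives that $\Phi(y)=-Qy$ is Lipschitz. The crucial computation is the monotonicity condition (H3.3)(i): for $\hat\theta=\theta_1-\theta_2$ one computes
\[
\langle A(t,\theta_1)-A(t,\theta_2),\ \hat\theta\rangle = -\langle D R^{-1}D^\top\hat x,\ \hat x\rangle -\langle L\hat y,\ \hat y\rangle -\sum_{i=1}^d\langle M_i\hat z_i,\ \hat z_i\rangle -\sum_{i=1}^l\int_{\mathcal E}\langle S_i\hat r_i,\ \hat r_i\rangle\pi_i(de),
\]
because all the off-diagonal cross terms (those involving $A^\top,B_i^\top,C_i^\top$ on one side and $A,B_i,C_i$ on the other) cancel in pairs upon summing the $x$-component against $b$, the $y$-component against $-g$, etc. By (A1.1) each quadratic form on the right is bounded below by $\mu$ times the corresponding squared norm, so $\langle A(t,\theta_1)-A(t,\theta_2),\hat\theta\rangle\leq-\mu|\hat\theta|^2$. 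For (H3.3)(ii), $\langle\Phi(y_1)-\Phi(y_2),y_1-y_2\rangle=-\langle Q(y_1-y_2),y_1-y_2\rangle\leq 0$ since $Q$ is positive semi-definite. Finally (H3.4), i.e.\ $2\mu-K>0$, is exactly (A1.2) up to the strict/non-strict distinction, which is harmless by Remark \ref{Sec3_Remark_Artificial} (one replaces $\mu$ by a slightly smaller value, or $K$ by a slightly smaller value). Hence Theorem \ref{Sec3_THM} applies and \eqref{Sec4.1_FBSDE_SOC} has a unique solution in $\mathcal L^{2,K}_{\mathbb F}(0,\infty)$; Lemma \ref{Sec4.1_Lemma} then yields that $u(t)=R^{-1}(t)D^\top(t)x(t)$ is the unique optimal control, noting that $u\in L^{2,K}_{\mathbb F}(0,\infty;\mathbb R^k)$ because $x\in L^{2,K}_{\mathbb F}$ and $R^{-1}D^\top$ is bounded, so $\mathcal V$ is indeed nonempty.

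The main obstacle I anticipate is the bookkeeping in the cross-term cancellation: one must carefully pair the $x$-component of $\hat\theta$ with the drift $b$ and diffusion coefficients $\sigma_i,\gamma_i$ of the \emph{forward} equation, and the $(y,z_i,r_i)$-components with the generator $g$ of the \emph{backward} equation, and verify that the terms $\langle A^\top\hat x,\hat y\rangle$ appearing in the $\langle b,\hat x\rangle$ part are exactly cancelled by $\langle A\hat y,\hat x\rangle$ appearing in the $-\langle g,\hat y\rangle$ part, and similarly for $B_i$ and $C_i$ (the latter requiring the identity $\int_{\mathcal E}\langle C_i^\top\hat x,\hat r_i\rangle\pi_i(de)=\langle\hat x,\int_{\mathcal E}C_i\hat r_i\pi_i(de)\rangle$). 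This is precisely the structural feature that makes the Hamiltonian system of an LQ problem monotone, so it is expected to work cleanly, but it is where a sign error would be fatal. A secondary minor point is dealing with the non-strict inequality in (A1.2), handled by the remark cited above.
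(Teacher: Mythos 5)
Your proposal is correct and takes essentially the same route as the paper, whose proof simply asserts that (A1.1)--(A1.2) imply (H3.1)--(H3.4) and then cites Theorem \ref{Sec3_THM} and Lemma \ref{Sec4.1_Lemma}; your cross-term cancellation computation is exactly the verification the paper leaves implicit. The only nitpick is in your aside on the boundary case $2\mu=K$: shrinking $\mu$ goes the wrong way (it weakens the monotonicity inequality), so the correct fix is to shrink $K$, at the cost of obtaining the solution in the larger space $\mathcal L^{2,K'}_{\mathbb F}(0,\infty)$ with $K'<K$, which is precisely what Remark \ref{Sec3_Remark_Artificial} describes.
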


\begin{proof}
It is easy to check that Assumptions (A1.1)-(A1.2) imply Assumptions
(H3.1)-(H3.4). Then by Theorem \ref{Sec3_THM}, the forward-backward
SDE \eqref{Sec4.1_FBSDE_SOC} is uniquely solvable. Moreover, thanks
to Lemma \ref{Sec4.1_Lemma}, we can finish the proof.
\end{proof}

Next we extend the LQ SOC problem to an LQ nonzero-sum stochastic
differential game (NZSSDG) problem. Without loss of generality, we
only consider the case of two players in this paper. The case of
$n(\geq 3)$ players can be treated in the same way. In detail, the
game system is described by the following controlled linear backward
SDE on $[0,\infty)$:
\begin{equation}\label{Sec4.2_Sys}
\begin{aligned}
-dy(t) =\ & \bigg[ A(t)y(t) +\sum_{i=1}^d B_i(t)z_i(t) +\sum_{i=1}^l
\int_{\mathcal E} C_i(t,e) r_i(t,e) \pi_i(de) +D_1(t)v_1(t)\\
& +D_2(t)v_2(t) +\alpha(t) \bigg]dt -\sum_{i=1}^d z_i(t) dW_i(t)
-\sum_{i=1}^l \int_{\mathcal E} r_i(t,e) \tilde N_i(dt,de),
\end{aligned}
\end{equation}
where $A$, $B_i$ ($i=1,2,\dots,d$), $D_1$, $D_2$ are $\mathbb
F$-progressively measurable, matrix-valued, bounded processes with
appropriate dimensions; $C_i$ ($i=1,2,\dots, l$) is a $\mathcal
P\otimes \mathcal B(\mathcal E)$-measurable, ($n\times n$)
matrix-valued process such that $\int_{\mathcal E} |C_i(t,e)|^2
\pi_i(de)$ is uniformly bounded for any $(\omega,t)\in \Omega\times
[0,\infty)$; and the nonhomogeneous term $\alpha\in L^{2,K}_{\mathbb
F}(0,\infty;\mathbb R^n)$ where $K>0$ is a constant. $v_1$ and $v_2$
are the control processes of Player 1 and Player 2, respectively. We
introduce the admissible control set for the two players:
\[
\begin{aligned}
\mathcal V := \Big\{ & (v_1,v_2) \in L^{2,K}_{\mathbb
F}(0,\infty;\mathbb
R^{k_1}) \times L^{2,K}_{\mathbb F}(0,\infty;\mathbb R^{k_2})\ \Big|\ \mbox{with respect to $(v_1,v_2)$, \eqref{Sec4.2_Sys} admits}\\
& \mbox{a unique solution } (y^{v_1,v_2},z^{v_1,v_2},r^{v_1,v_2})\in
L^{2,K}_{\mathbb F}(0,\infty;\mathbb R^n) \times L^{2,K}_{\mathbb
F}(0,\infty;\mathbb R^{n\times d})\\
& \times M^{2,K}_{\mathbb F}(0,\infty;\mathbb R^{n\times l})
\Big\},
\end{aligned}
\]
in which each element $(v_1,v_2)$ is called an admissible control
pair, and $(y^{v_1,v_2},z^{v_1,v_2},r^{v_1,v_2})$ is called the
state trajectory corresponding to $(v_1,v_2)$. As same as Problem
(SOC), we will show the admissible control set $\mathcal V$ is
nonempty under suitable conditions. For any $(v_1,v_2)\in \mathcal
V$, let
\[
\begin{aligned}
\mathcal V_1(v_2) =\ & \{ \bar v_1 \ |\ (\bar v_1, v_2) \in \mathcal
V \},\\
\mathcal V_2(v_1) =\ & \{ \bar v_2 \ |\ (v_1, \bar v_2) \in \mathcal
V \}.
\end{aligned}
\]
Additionally, the cost functionals of the two players are given as
follows: for $i=1,2$,
\begin{equation}\label{Sec4.2_Cost}
\begin{aligned}
J_i(v_1(\cdot),v_2(\cdot)) =\ & \frac 1 2 \langle Q_iy(0),\ y(0)
\rangle + \frac 1 2 \mathbb E\int_0^\infty \bigg[ \langle
L_i(t)y(t),\ y(t) \rangle +\sum_{j=1}^d \langle M_{ij}(t)z_j(t),\
z_j(t) \rangle\\
& +\sum_{j=1}^l \int_{\mathcal E} \langle S_{ij}(t,e)r_j(t,e),\
r_j(t,e) \rangle \pi_j(de) +\langle R_i(t)v_i(t),\ v_i(t) \rangle
\bigg]dt,
\end{aligned}
\end{equation}
where $Q_i$ is an ($n\times n$) symmetric and positive semi-definite
matrix; $L_i$, $M_{ij}$ ($j=1,2,\dots,d$) are $\mathbb
F$-progressively measurable, ($n\times n$) symmetric and positive
semi-definite matrix-valued, bounded processes; $S_{ij}$
($j=1,2,\dots,l$) is a $\mathcal P\otimes\mathcal B(\mathcal
E)$-measurable, ($n\times n$) symmetric and positive semi-definite
matrix-valued, bounded process; $R_i$ is an $\mathbb
F$-progressively measurable, ($k\times k$) symmetric and positive
definite matrix-valued, bounded process. Moreover, $R_i^{-1}$ is
also bounded.

Suppose each player hopes to minimize his/her cost functional
$J_i(v_1(\cdot),v_2(\cdot))$ by selecting an appropriate admissilbe
control $v_i$ ($i=1,2$), then the game problem is formulated
as follows.\\

\noindent{\bf Problem (NZSSDG).} The problem is to find a pair of
admissible controls $(u_1,u_2)\in\mathcal V$ such that
\begin{equation}\label{Sec4.2_Nash}
\begin{aligned}
& J_1(u_1(\cdot),u_2(\cdot)) = \inf_{v_1(\cdot)\in\mathcal V_1(u_2)}
J_1(v_1(\cdot),u_2(\cdot)),\\
& J_2(u_1(\cdot),u_2(\cdot)) = \inf_{v_2(\cdot)\in\mathcal V_2(u_1)}
J_2(u_1(\cdot),v_2(\cdot)).
\end{aligned}
\end{equation}
Such a pair of admissible controls $(u_1,u_2)$ is called a Nash
equilibrium point. For the sake of notations, we denote the state
trajectory corresponding to $(u_1,u_2)$ by $(y,z,r) :=
(y^{u_1,u_2},z^{u_1,u_2},r^{u_1,u_2})$.

\begin{lemma}\label{Sec4.2_Lemma}
If the following forward-backward SDE:
\begin{equation}\label{Sec4.2_FBSDE_NZSSDG}
\left\{
\begin{aligned}
dx_1(t) =\ & \Big[ A^\top(t)x_1(t) -L_1(t)y(t) \Big]dt +\sum_{j=1}^d
\Big[ B_j^\top(t)x_1(t) -M_{1j}(t)z_j(t) \Big] dW_j(t)\\
& +\sum_{j=1}^l \int_{\mathcal E} \Big[ C_j^\top(t,e)x_1(t-)
-S_{1j}(t,e) r_j(t,e) \Big] \tilde N_j(dt,de),\\
dx_2(t) =\ & \Big[ A^\top(t)x_2(t) -L_2(t)y(t) \Big]dt +\sum_{j=1}^d
\Big[ B_j^\top(t)x_2(t) -M_{2j}(t)z_j(t) \Big] dW_j(t)\\
& +\sum_{j=1}^l \int_{\mathcal E} \Big[ C_j^\top(t,e)x_2(t-)
-S_{2j}(t,e) r_j(t,e) \Big] \tilde N_j(dt,de),\\
-dy(t) =\ & \bigg[ D_1(t)R_1^{-1}(t)D_1^\top(t)x_1(t)
+D_2(t)R_2^{-1}(t)D_2^\top(t)x_2(t) +A(t)y(t)\\
& +\sum_{j=1}^d B_j(t)z_j(t) +\sum_{j=1}^l \int_{\mathcal E}
C_j(t,e)r_j(t,e)\pi_j(de) +\alpha(t) \bigg] dt\\
& -\sum_{j=1}^d z_j(t)dW_j(t) -\sum_{j=1}^l \int_{\mathcal E}
r_j(t,e) \tilde N_j(dt,de),\\
x_1(0) =\ & -Q_1 y(0), \quad x_2(0) = -Q_2 y(0)
\end{aligned}
\right.
\end{equation}
admits a solution $(x_1,x_2,y,z,r)\in L^{2,K}_{\mathbb
F}(0,\infty;\mathbb R^n)\times \mathcal L^{2,K}_{\mathbb
F}(0,\infty)$, then
\begin{equation}\label{Sec4.2_NashPoint}
\left(\begin{array}{ccc} u_1(t) \\ u_2(t) \end{array}\right) =
\left(\begin{array}{ccc} R_1^{-1}(t)D_1^\top(t) x_1(t)\\
R_2^{-1}(t)D_2^\top(t) x_2(t)
\end{array}\right),\quad t\in [0,\infty),
\end{equation}
provides a Nash equilibrium point for Problem (NZSSDG).
\end{lemma}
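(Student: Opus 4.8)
The plan is to repeat, player by player, the argument used for the single-controller Problem (SOC) in Lemma~\ref{Sec4.1_Lemma}. First I would record the preliminaries that make the statement meaningful. Since $(x_1,x_2,y,z,r)$ solves \eqref{Sec4.2_FBSDE_NZSSDG} inside $L^{2,K}_{\mathbb F}(0,\infty;\mathbb R^n)\times\mathcal L^{2,K}_{\mathbb F}(0,\infty)$, Proposition~\ref{Sec2_Prop} applied to the forward equations for $x_1$ and $x_2$ gives $x_1,x_2\in\mathcal X^K(0,\infty;\mathbb R^n)$ with $\lim_{t\to\infty}\mathbb E[|x_i(t)|^2 e^{Kt}]=0$, so in particular $\mathbb E[|x_i(t)|^2]\to 0$. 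Boundedness of $D_i$ and $R_i^{-1}$ then shows $u_i(\cdot)=R_i^{-1}(\cdot)D_i^\top(\cdot)x_i(\cdot)\in L^{2,K}_{\mathbb F}(0,\infty;\mathbb R^{k_i})$; and substituting $D_1R_1^{-1}D_1^\top x_1=D_1u_1$ and $D_2R_2^{-1}D_2^\top x_2=D_2u_2$ into the backward equation of \eqref{Sec4.2_FBSDE_NZSSDG} identifies it with \eqref{Sec4.2_Sys} driven by $(u_1,u_2)$. Hence $(u_1,u_2)\in\mathcal V$ with state trajectory exactly $(y,z,r)$.

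Next I would prove the first equality in \eqref{Sec4.2_Nash}. Fix an arbitrary $v_1\in\mathcal V_1(u_2)$ with state trajectory $(y^{v_1,u_2},z^{v_1,u_2},r^{v_1,u_2})$, and set $\hat y:=y^{v_1,u_2}-y$, $\hat z_j:=z_j^{v_1,u_2}-z_j$, $\hat r_j:=r_j^{v_1,u_2}-r_j$. Because both trajectories solve \eqref{Sec4.2_Sys} with the \emph{same} second control $u_2$, the difference $\hat y$ satisfies a linear backward SDE with drift $A\hat y+\sum_j B_j\hat z_j+\sum_j\int_{\mathcal E}C_j\hat r_j\,\pi_j(de)+D_1(v_1-u_1)$ and with martingale part governed by $\hat z_j,\hat r_j$; the $D_2u_2$ terms and the forcing $\alpha$ cancel. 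Expanding $J_1(v_1,u_2)-J_1(u_1,u_2)$ exactly as in \eqref{Sec4.1_Temp} represents it as a sum of the manifestly nonnegative quadratic forms built from $Q_1,L_1,M_{1j},S_{1j}$ and $R_1$ (evaluated at $\hat y(0),\hat y,\hat z_j,\hat r_j,v_1-u_1$ respectively) plus a cross term $\Lambda_1$. The crux is to show $\Lambda_1=0$. Applying It\^o's formula to $\langle x_1(t),\hat y(t)\rangle$ on $[0,T]$, the $A^\top x_1$ against $A\hat y$, the $B_j^\top x_1$ against $B_j\hat z_j$, and the $C_j^\top x_1$ against $C_j\hat r_j$ contributions cancel pairwise; the initial condition $x_1(0)=-Q_1y(0)$ supplies $-\langle Q_1y(0),\hat y(0)\rangle$; and using $D_1^\top x_1=R_1u_1$ one is left with
\[
\mathbb E\big[\langle x_1(T),\hat y(T)\rangle\big]+\langle Q_1y(0),\hat y(0)\rangle=-\,\mathbb E\int_0^T\Big[\langle L_1y,\hat y\rangle+\sum_j\langle M_{1j}z_j,\hat z_j\rangle+\sum_j\int_{\mathcal E}\langle S_{1j}r_j,\hat r_j\rangle\pi_j(de)+\langle R_1u_1,v_1-u_1\rangle\Big]\,dt.
\]
Letting $T\to\infty$ and using $\mathbb E[|x_1(T)|^2]\to0$ together with $\mathbb E[|\hat y(T)|^2]\to0$ (Corollary~\ref{Sec2_Corollary} applied to $y^{v_1,u_2}$ and to $y$), the boundary term vanishes by Cauchy--Schwarz, and the resulting identity is exactly $\Lambda_1=0$. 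Therefore $J_1(v_1,u_2)-J_1(u_1,u_2)\ge0$ by positive semidefiniteness of $Q_1,L_1,M_{1j},S_{1j}$ and positive definiteness of $R_1$; arbitrariness of $v_1$ yields $J_1(u_1,u_2)=\inf_{v_1\in\mathcal V_1(u_2)}J_1(v_1,u_2)$.

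Finally, the second equality in \eqref{Sec4.2_Nash} follows by the symmetric computation with Player~1's control frozen at $u_1$, using the $x_2$-equation of \eqref{Sec4.2_FBSDE_NZSSDG}, the initial condition $x_2(0)=-Q_2y(0)$, the identity $D_2^\top x_2=R_2u_2$, and the data $Q_2,L_2,M_{2j},S_{2j},R_2$. I do not expect any genuinely new difficulty beyond the single-controller case; the one point requiring care is the passage $T\to\infty$ in the It\^o computation, which, as in the proof of Lemma~\ref{Sec4.1_Lemma}, rests on the infinite-horizon decay estimates of Proposition~\ref{Sec2_Prop} and Corollary~\ref{Sec2_Corollary} in place of a terminal condition.
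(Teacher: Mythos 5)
Your proposal is correct and follows essentially the same route as the paper: freeze the opponent's control $u_{3-i}$, observe that the resulting problem for player $i$ is an instance of Problem (SOC) whose Hamiltonian system is solved by $(x_i,y,z,r)$ extracted from \eqref{Sec4.2_FBSDE_NZSSDG}, and conclude via the completion-of-squares/It\^o argument. The only difference is presentational: the paper simply cites Lemma \ref{Sec4.1_Lemma} for each frozen-opponent subproblem (with $D_{3-i}u_{3-i}+\alpha$ absorbed into the nonhomogeneous term), whereas you re-derive that lemma's computation inline.
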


\begin{proof}
We shall link Problem (NZSSDG) with two LQ SOC problems. Precisely,
for $i=1,2$, we fix $u_{3-i}(\cdot)$ which is defined in
\eqref{Sec4.2_NashPoint}. To minimize (the argument $(t,e)$ is
suppressed)
\begin{equation}\label{Sec4.2_Cost_i}
\begin{aligned}
J_i(v_i(\cdot),u_{3-i}(\cdot)) =\ & \frac{1}{2} \langle Q_i
y^{v_i}(0),\ y^{v_i}(0) \rangle +\frac 1 2 \mathbb E\int_0^\infty
\bigg[ \langle L_iy^{v_i},\
y^{v_i} \rangle +\sum_{j=1}^d \langle M_{ij}z_j^{v_i},\ z_j^{v_i} \rangle\\
& +\sum_{j=1}^l\int_{\mathcal E} \langle S_{ij} r_j^{v_i},\
r_j^{v_i} \rangle \pi_j(de) +\langle R_iv_i,\ v_i \rangle \bigg]dt
\end{aligned}
\end{equation}
subject to
\begin{equation}\label{Sec4.2_Sys_i}
\begin{aligned}
-dy^{v_i} =\ & \bigg[ Ay^{v_i} +\sum_{j=1}^d B_jz_j^{v_i}
+\sum_{j=1}^l\int_{\mathcal E} C_jr_j^{v_i} \pi_j(de) +D_iv_i +\Big(
D_{3-i}u_{3-i} +\alpha \Big)
\bigg]dt\\
& -\sum_{j=1}^d z_j^{v_i} dW_j -\sum_{j=1}^l \int_{\mathcal E}
r_j^{v_i} \tilde N_j(dt,de)
\end{aligned}
\end{equation}
over $\mathcal V_i(u_{3-i})$ is an LQ SOC problem. Since
\eqref{Sec4.2_FBSDE_NZSSDG} admits a solution $(x_1,x_2,y,z,r)$,
then $(x_i,y,z,r)$ solves the following forward-backward SDE:
\begin{equation}\label{Sec4.2_FBSDE_SOC_i}
\left\{
\begin{aligned}
dx_i =\ & \Big[ A^\top x_i - L_iy \Big] dt +\sum_{j=1}^d \Big[
B_j^\top x_i -M_{ij}z_j \Big]dW_j\\
& +\sum_{j=1}^l \int_{\mathcal E}
\Big[ C_j^\top x_i -S_{ij}r_j \Big] \tilde N_j(dt,de),\\
-dy =\ & \bigg[ D_iR_i^{-1}D_i^\top x_i +Ay +\sum_{j=1}^d B_jz_j
+\sum_{j=1}^l\int_{\mathcal E} c_jr_j\pi_j(de) +\Big( D_{3-i}u_{3-i}
+\alpha \Big) \bigg]dt\\
& -\sum_{j=1}^d z_j dW_j -\sum_{j=1}^l\int_{\mathcal E} r_j\tilde
N_j(dt,de),\\
x_i(0) =\ & -Q_i y(0).
\end{aligned}
\right.
\end{equation}
By Lemma \ref{Sec4.1_Lemma}, the LQ SOC problem
\eqref{Sec4.2_Cost_i}-\eqref{Sec4.2_Sys_i} admits a unique optimal
control with the form
\[
u_i(t) = R_i^{-1}(t)D_i^\top(t)x_i(t),\quad t\in [0,\infty),
\]
which is coincided with \eqref{Sec4.2_NashPoint}. In other words,
the following equation holds:
\[
J_i(u_i(\cdot),u_{3-i}(\cdot)) = \inf_{v_i(\cdot)\in\mathcal
V_i(u_{3-i})} J_i(v_i(\cdot),u_{3-i}(\cdot)).
\]
Since $i=1,2$, from the definition of the Nash equilibrium point
(see \eqref{Sec4.2_Nash}), $(u_1,u_2)$ defined by
\eqref{Sec4.2_NashPoint} provides a Nash equilibrium point for
Problem (NZSSDG).
\end{proof}

The forward-backward SDE \eqref{Sec4.2_FBSDE_NZSSDG} is more
complicated. In order to obtain the solvability of
\eqref{Sec4.2_FBSDE_NZSSDG}, we would like to employ a linear
transform which is originally introduced by Hamad\`{e}ne
\cite{Hamadene1998} (see also Yu \cite{Yu2012}). For this transform,
we need to introduce the following assumptions:
\begin{enumerate}[(\mbox{A2}.1)]
\item The matrix-valued processes $D_iR_i^{-1}D_i^\top$, $i=1,2$,
are independent of $t$.
\item The following commutation relations among matrices hold true:
\[
D_i(t)R_i^{-1}(t)D_i^\top(t) H(t) =H(t)
D_i(t)R_i^{-1}(t)D_i^\top(t),\quad t\in [0,\infty),\quad i=1,2,
\]
where $H(t) = A^\top(t)$, $B^\top_j(t)$ ($j=1,2,\dots,d$),
$C^\top_j(t)$ ($j=1,2,\dots,l$).
\item There exists a constant $\delta>0$ such that, for any
$(\omega,t)\in \Omega\times [0,\infty)$,
\[
2A(t) +\sum_{j=1}^d B_j(t)B_j^\top(t) +\sum_{j=1}^l\int_{\mathcal E}
C_j(t,e)C_j^\top(t,e) \pi_j(de) +KI \leq -\delta I.
\]
\end{enumerate}
We notice that, Assumption (A2.3) is not necessary when the
corresponding finite horizon game problems were studied (see for
example \cite{Hamadene1998,Yu2012}). Here we assume it due to the
infinite time horizon.

Now we introduce another forward-backward SDE (the argument $(t,e)$
is suppressed):
\begin{equation}\label{Sec4.2_FBSDE_Hama}
\left\{
\begin{aligned}
d\bar x =\ & \Big[ A^\top \bar x -\left( D_1R_1^{-1}D_1^\top L_1 +
D_2R_2^{-1}D_2^\top L_2 \right)\bar y \Big]dt\\
& +\sum_{j=1}^d \Big[ B_j^\top \bar x -\left( D_1R_1^{-1}D_1^\top
M_{1j} + D_2R_2^{-1}D_2^\top M_{2j} \right)\bar z_j \Big] dW_j\\
& +\sum_{j=1}^l \int_{\mathcal E} \Big[ C_j^\top \bar x -\left(
D_1R_1^{-1}D_1^\top S_{1j} + D_2R_2^{-1}D_2^\top S_{2j} \right)\bar
r_j \Big] \tilde N_j(dt,de),\\
-d\bar y =\ & \bigg[ \bar x +A\bar y +\sum_{j=1}^d B_j\bar z_j
+\sum_{j=1}^l \int_{\mathcal E} C_j\bar r_j\pi_j(de) +\alpha \bigg]
dt\\
& -\sum_{j=1}^d \bar z_jdW_j -\sum_{j=1}^l \bar r_j \tilde
N_j(dt,de),\\
\bar x(0) =\ & -\left( D_1(0)R_1^{-1}(0)D_1^\top(0)Q_1 +
D_2(0)R_2^{-1}(0)D_2^\top(0)Q_2 \right) \bar y(0),
\end{aligned}
\right.
\end{equation}
and give the following result.

\begin{lemma}\label{Sec4.2_Lemma_Hama}
Under Assumptions (A2.1)-(A2.3), the existence and uniqueness of
\eqref{Sec4.2_FBSDE_NZSSDG} are equivalent to that of
\eqref{Sec4.2_FBSDE_Hama}.
\end{lemma}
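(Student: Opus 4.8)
The plan is to realize \eqref{Sec4.2_FBSDE_Hama} as the image of \eqref{Sec4.2_FBSDE_NZSSDG} under the linear change of unknowns
\[
\bar x(t):=D_1(t)R_1^{-1}(t)D_1^\top(t)x_1(t)+D_2(t)R_2^{-1}(t)D_2^\top(t)x_2(t),\qquad (\bar y,\bar z,\bar r):=(y,z,r),
\]
and to verify that this is a bijection between the solution set of \eqref{Sec4.2_FBSDE_NZSSDG} in $L^{2,K}_{\mathbb F}(0,\infty;\mathbb R^n)\times\mathcal L^{2,K}_{\mathbb F}(0,\infty)$ and the solution set of \eqref{Sec4.2_FBSDE_Hama} in $\mathcal L^{2,K}_{\mathbb F}(0,\infty)$; both existence and uniqueness then transfer across. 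First I would treat the direction ``solution of \eqref{Sec4.2_FBSDE_NZSSDG} $\Rightarrow$ solution of \eqref{Sec4.2_FBSDE_Hama}''. Since (A2.1) makes $D_iR_i^{-1}D_i^\top$ independent of $t$, It\^{o}'s formula gives $d\bar x=\sum_{i=1,2}D_iR_i^{-1}D_i^\top\,dx_i$ with no extra term; the commutation relations (A2.2) then let one move each $D_iR_i^{-1}D_i^\top$ past $A^\top$, $B_j^\top$ and $C_j^\top$ inside the $x_i$-dynamics, and the terms regroup into exactly the drift, diffusion and jump coefficients of the $\bar x$-equation of \eqref{Sec4.2_FBSDE_Hama}; the initial condition matches on applying the same combination to $x_i(0)=-Q_iy(0)$, and the backward equation is unchanged because its driving term $D_1R_1^{-1}D_1^\top x_1+D_2R_2^{-1}D_2^\top x_2$ is by definition $\bar x$. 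Boundedness of the matrices $D_iR_i^{-1}D_i^\top$ makes $\bar x\in L^{2,K}_{\mathbb F}(0,\infty;\mathbb R^n)$ automatic.

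The converse direction is more delicate. Starting from a solution $(\bar x,\bar y,\bar z,\bar r)\in\mathcal L^{2,K}_{\mathbb F}(0,\infty)$ of \eqref{Sec4.2_FBSDE_Hama}, set $(y,z,r):=(\bar y,\bar z,\bar r)$; with $(y,z,r)$ now known, the two forward equations of \eqref{Sec4.2_FBSDE_NZSSDG} become decoupled linear SDEs with bounded coefficients and initial values $x_i(0)=-Q_iy(0)$, so each has a unique strong solution $x_i\in S^{2,loc}_{\mathbb F}(0,\infty;\mathbb R^n)$. To show $x_i\in L^{2,K}_{\mathbb F}(0,\infty;\mathbb R^n)$ I would run the energy estimate of the proof of Lemma \ref{Sec3_Lemma_0}: applying It\^{o}'s formula to $|x_i(t)|^2e^{Kt}$ on $[0,T]$ and letting $T\to\infty$, the self-interaction term equals $\langle(KI+2A+\sum_jB_jB_j^\top+\sum_j\int_{\mathcal E}C_jC_j^\top\pi_j(de))x_i,x_i\rangle$ (using $\langle A^\top x,x\rangle=\langle Ax,x\rangle$), which by (A2.3) is $\le-\delta|x_i|^2$, while the cross terms against $L_iy$, $M_{ij}z_j$, $S_{ij}r_j$ are absorbed by Young's inequality at the cost of the $L^{2,K}$- and $M^{2,K}$-norms of $y,z,r$; this is the step where (A2.3) is genuinely used. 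Finally, $\hat x:=D_1R_1^{-1}D_1^\top x_1+D_2R_2^{-1}D_2^\top x_2$ satisfies, by the computation of the first paragraph, the same linear forward SDE as $\bar x$ with the same initial value, hence $\hat x=\bar x$ by uniqueness of linear SDEs, and consequently $(x_1,x_2,y,z,r)$ solves \eqref{Sec4.2_FBSDE_NZSSDG}.

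Since the two constructions are mutually inverse, \eqref{Sec4.2_FBSDE_NZSSDG} admits a solution if and only if \eqref{Sec4.2_FBSDE_Hama} does. For uniqueness: if \eqref{Sec4.2_FBSDE_Hama} is uniquely solvable, any two solutions of \eqref{Sec4.2_FBSDE_NZSSDG} have the same image $(\bar x,\bar y,\bar z,\bar r)$, so $(y,z,r)$ agree and then $x_1,x_2$ agree by uniqueness of the linear forward SDEs; conversely, if \eqref{Sec4.2_FBSDE_NZSSDG} is uniquely solvable, two solutions of \eqref{Sec4.2_FBSDE_Hama} reconstruct to the same solution of \eqref{Sec4.2_FBSDE_NZSSDG}, forcing the original pair to coincide. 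Hence existence and uniqueness for the two systems are equivalent. I expect the only real obstacle to be bookkeeping rather than analysis: carefully commuting $D_iR_i^{-1}D_i^\top$ through $A^\top,B_j^\top,C_j^\top$ via (A2.2), and isolating where (A2.1) is needed (so that no time-derivative of $D_iR_i^{-1}D_i^\top$ appears) and where (A2.3) is needed ($L^{2,K}$-integrability of the reconstructed $x_1,x_2$); beyond that only the standard linear-SDE energy estimate intervenes.
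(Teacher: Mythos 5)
Your proposal follows essentially the same route as the paper's proof: the linear transform $\bar x=D_1R_1^{-1}D_1^\top x_1+D_2R_2^{-1}D_2^\top x_2$ justified by (A2.1)--(A2.2), the reconstruction of $(x_1,x_2)$ from $(\bar y,\bar z,\bar r)$ via the decoupled linear forward SDEs, the energy estimate on $|x_i(t)|^2e^{Kt}$ using (A2.3) for $L^{2,K}$-integrability, and the identification $\hat x=\bar x$ by uniqueness of the linear SDE. Your explicit treatment of how uniqueness transfers in both directions fills in a step the paper dismisses with ``in a similar way,'' but the argument is the same.
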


\begin{proof}
On the one hand, by Assumptions (A2.1) and (A2.2), if
$(x_1,x_2,y,z,r)\in L^{2,K}_{\mathbb F}(0,\infty;\mathbb R^n)\times
\mathcal L^{2,K}_{\mathbb F}(0,\infty)$ is a solution of
\eqref{Sec4.2_FBSDE_NZSSDG}, then
\[
\left\{
\begin{aligned}
\bar x(t) =\ & D_1(t)R_1^{-1}(t)D_1^\top(t)x_1(t) +
D_2(t)R_2^{-1}(t)D_2^\top(t)x_2(t),\\
\bar y(t) =\ & y(t),\quad \bar z_j(t) =z_j(t)\ (j=1,2,\dots,d),\quad
\bar r_j(t,e) =r_j(t,e)\ (j=1,2,\dots,l),
\end{aligned}
\right.\quad t\in [0,\infty)
\]
belonging to $\mathcal L^{2,K}_{\mathbb F}(0,\infty)$ solves
\eqref{Sec4.2_FBSDE_Hama}.

On the other hand, if $(\bar x, \bar y, \bar z, \bar r)\in\mathcal
L^{2,K}_{\mathcal F}(0,\infty)$ is a solution of
\eqref{Sec4.2_FBSDE_Hama}, we let $y=\bar y$, $z=\bar z$, $r=\bar r$
and $(x_1, x_2)$ be the unique solution of the following SDE:
\[
\left\{
\begin{aligned}
dx_1 =\ & \Big[ A^\top x_1 -L_1y \Big]dt +\sum_{j=1}^d \Big[
B_j^\top x_1 -M_{1j}z_j \Big] dW_j +\sum_{j=1}^l \int_{\mathcal E}
\Big[ C_j^\top x_1 -S_{1j}r_j \Big] \tilde N_j(dt,de),\\
dx_2 =\ & \Big[ A^\top x_2 -L_2y \Big]dt +\sum_{j=1}^d \Big[
B_j^\top x_2 -M_{2j}z_j \Big] dW_j +\sum_{j=1}^l \int_{\mathcal E}
\Big[ C_j^\top x_2 -S_{2j}r_j \Big] \tilde N_j(dt,de),\\
x_1(0) =\ & -Q_1y(0),\quad x_1(0) = -Q_2 y(0).
\end{aligned}
\right.
\]
Obviously $(x_1,x_2)\in S^{2,loc}_{\mathbb F}(0,\infty;\mathbb
R^n)\times S^{2,loc}_{\mathbb F}(0,\infty;\mathbb R^n)$. Moreover,
Assumption (A2.3) ensures it also belongs to $L^{2,K}_{\mathbb
F}(0,\infty;\mathbb R^n) \times L^{2,K}_{\mathbb F}(0,\infty;\mathbb
R^n)$. In fact, for $i=1,2$, we apply It\^{o}'s formula to
$|x_i(t)|^2 e^{Kt}$ on the interval $[0,T]$ to have
\[
\begin{aligned}
& \mathbb E\Big[ |x_i(T)|^2 e^{KT} \Big] -|x_i(0)|^2\\
=\ & \mathbb E\int_0^T \bigg[ \Big\langle \Big( 2A +\sum_{j=1}^d
B_jB_j^\top +\sum_{j=1}^l\int_{\mathcal E} C_jC_j^\top\pi_j(de) +KI
\Big)x_i,\ x_i \Big\rangle\\
& +2\Big\langle \Big( -L_iy +\sum_{j=1}^d B_jM_{ij}z_j
+\sum_{j=1}^l\int_{\mathcal E} C_jS_{ij}r_j \pi_j(de) \Big),\ x_i
\Big\rangle\\
& +\sum_{j=1}^d |M_{ij}z_j|^2 +\sum_{j=1}^l\int_{\mathcal E}
|S_{ij}r_j|^2 \pi_j(de) \bigg] e^{Kt} dt.
\end{aligned}
\]
By Assumption (A2.3) and the inequality: $2\langle y,\ x \rangle
\leq (2/\delta)|y|^2 +(\delta/2)|x|^2$,
\[
\begin{aligned}
& \frac{\delta}{2} \mathbb E\int_0^T |x_i|^2 e^{Kt} dt\\
\leq\ & |x_i(0)|^2 +\mathbb E\int_0^T \bigg[ \frac{2}{\delta}\bigg|
-L_iy +\sum_{j=1}^d B_jM_{ij}z_j +\sum_{j=1}^l\int_{\mathcal E}
C_jS_{ij}r_j \pi_j(de) \bigg|^2\\
& +\sum_{j=1}^d |M_{ij}z_j|^2 +\sum_{j=1}^l\int_{\mathcal E}
|S_{ij}r_j|^2 \pi_j(de) \bigg] e^{Kt} dt.
\end{aligned}
\]
Letting $T\rightarrow\infty$, we get $x_i\in L^{2,K}_{\mathbb
F}(0,\infty;\mathbb R^n)$. In what follows, we shall show that
$(x_1,x_2,y,z,r)\in L^{2,K}_{\mathbb F}(0,\infty;\mathbb R^n)\times
\mathcal L^{2,K}_{\mathbb F}(0,\infty)$ defined above is a solution
of the forward-backward SDE \eqref{Sec4.2_FBSDE_NZSSDG}. Actually,
the remaining thing is to show $(x_1,x_2,y,z,r)$ satisfies the
backward equation in \eqref{Sec4.2_FBSDE_NZSSDG}. Compared with the
backward equation in \eqref{Sec4.2_FBSDE_Hama}, we only need to show
$D_1R_1^{-1} D_1^\top x_1 +D_2R_2^{-1} D_2^\top x_2 = \bar x$. For
the convenience, we let
\[
\tilde x(t) = D_1(t)R_1^{-1}(t) D_1^\top(t) x_1(t)
+D_2(t)R_2^{-1}(t) D_2^\top(t) x_2(t).
\]
By Assumptions (A2.1) and (A2.2), we know $\tilde x$ satisfies
\[
\left\{
\begin{aligned}
d\tilde x =\ & \Big[ A^\top \tilde x -\left( D_1R_1^{-1}D_1^\top L_1
+ D_2R_2^{-1}D_2^\top L_2 \right) y \Big]dt\\
& +\sum_{j=1}^d \Big[ B_j^\top \tilde x -\left( D_1R_1^{-1}D_1^\top
M_{1j} + D_2R_2^{-1}D_2^\top M_{2j} \right) z_j \Big] dW_j\\
& +\sum_{j=1}^l \int_{\mathcal E} \Big[ C_j^\top \tilde x -\left(
D_1R_1^{-1}D_1^\top S_{1j} + D_2R_2^{-1}D_2^\top S_{2j} \right)
r_j \Big] \tilde N_j(dt,de),\\
\tilde x(0) =\ & -\left( D_1(0)R_1^{-1}(0)D_1^\top(0)Q_1 +
D_2(0)R_2^{-1}(0)D_2^\top(0)Q_2 \right) y(0),
\end{aligned}
\right.
\]
which coincides with the forward equation in
\eqref{Sec4.2_FBSDE_Hama}. Regarding $(y,z,r)$ as fixed processes,
from the uniqueness of SDE, we have $\bar x =\tilde x$ and we proved
that $(x_1,x_2,y,z,r)$ solves \eqref{Sec4.2_FBSDE_NZSSDG}.

In a similar way, one can prove that the uniqueness of
\eqref{Sec4.2_FBSDE_NZSSDG} is equivalent to that of
\eqref{Sec4.2_FBSDE_Hama}.
\end{proof}

For the solvability of \eqref{Sec4.2_FBSDE_Hama}, and then
\eqref{Sec4.2_FBSDE_NZSSDG}, we impose the following assumptions:
\begin{enumerate}[(\mbox{A3}.1)]
\item The matrix $D_1(0)R_1^{-1}(0)D_1^\top(0) Q_1 +D_2(0)R_2^{-1}(0)D_2^\top (0)
Q_2$ is positive semi-definite.
\item There exists a constant $\mu>0$ such that
\[
D_1(t)R_1^{-1}(t)D_1^\top(t) H_1(t) + D_2(t)R_2^{-1}(t)D_2^\top(t)
H_2(t) \geq \mu I, \quad t\in [0,\infty),
\]
where $H_i(t) = L_i$, $M_{ij}$ ($j=1,2,\dots,d$), $S_{ij}$
($j=1,2,\dots,l$), $i=1,2$.
\item $2\min\{\mu, 1\}-K \geq 0$.
\end{enumerate}
Once again, in the viewpoint of Remark \ref{Sec3_Remark_Artificial},
Assumption (A3.3) is artificial. We keep it here for the convenience
of presentation.

\begin{remark}
\begin{enumerate}[(i)]
\item For the symmetric matrices $D_i(0)R_i^{-1}(0)D_i^\top(0)$ and
$Q_i$ ($i=1,2$), if they are commutative, then there exists an
orthogonal matrix $P$ such that both
$P^{-1}D_i(0)R_i^{-1}(0)D_i^\top(0)P$ and $P^{-1}Q_iP$ are diagonal
matrices. Moreover, due to the semi-definiteness of
$D_i(0)R_i^{-1}(0)D_i^\top(0)$ and $Q_i$, we have the following
statement: if $D_i(0)R_i^{-1}(0)D_i^\top(0) Q_i = Q_i
D_i(0)R_i^{-1}(0)D_i^\top(0)$ ($i=1,2$), then (A3.1) holds true.
\item Similarly, if there exist two constants $\beta_1\geq 0$ and $\beta_2\geq
0$ satisfying $\beta_1+\beta_2>0$ such that
\[
\begin{aligned}
& H_i(t)\geq \beta_i I,\quad D_i(t)R_i^{-1}(t)D_i^\top(t) \geq
\beta_i I,\\
& D_i(t)R_i^{-1}(t)D_i^\top(t) H_i(t) = H_i(t)
D_i(t)R_i^{-1}(t)D_i^\top(t),
\end{aligned}
\]
where $H_i(t) = L_i$, $M_{ij}$ ($j=1,2,\dots,d$), $S_{ij}$
($j=1,2,\dots,l$), $i=1,2$, then (A3.2) holds true.
\end{enumerate}
\end{remark}

\begin{theorem}\label{Sec4.2_THM}
Let Assumptions (A2.1)-(A2.3) and (A3.1)-(A3.3) hold.
\begin{enumerate}[(i)]
\item The forward-backward SDE
\eqref{Sec4.2_FBSDE_NZSSDG} admits a unique solution
$(x_1,x_2,y,z,r)\in L^{2,K}_{\mathbb F}(0,\infty;\mathbb R^n) \times
\mathcal L^{2,K}_{\mathbb F}(0,\infty)$. Moreover, $(u_1,u_2)$
defined by \eqref{Sec4.2_NashPoint} is a Nash equilibrium point for
Problem (NZSSDG).
\item If we further assume that, for any $(\omega, t, e)\in \Omega
\times [0,\infty)\times\mathcal E$, any $i=1,2$, $j=1,2,\dots,d$,
$k=1,2,\dots,l$
\begin{equation}\label{Sec4.2_Assumption_Last}
D_i(t)R_i^{-1}(t)D_i^\top(t) \geq \mu I,\quad L_i(t)\geq \mu I,
\quad M_{ij}(t)\geq \mu I,\quad S_{ik}(t,e)\geq \mu I,
\end{equation}
then $(u_1,u_2)$ defined by \eqref{Sec4.2_NashPoint} is the unique
Nash equilibrium point for Problem (NZSSDG).
\end{enumerate}
\end{theorem}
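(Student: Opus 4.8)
The plan is to deduce (i) from the auxiliary forward-backward SDE \eqref{Sec4.2_FBSDE_Hama} together with Lemma \ref{Sec4.2_Lemma_Hama}, and to deduce the uniqueness in (ii) from the per-player LQ control problems combined with the uniqueness already obtained in (i).

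\emph{Part (i).} First I would check that, under (A2.1)--(A2.3) and (A3.1)--(A3.3), the coefficients $A(t,\theta):=(-g(t,\theta),b(t,\theta),\sigma(t,\theta),\gamma(t,\cdot,\theta))$ of the auxiliary equation \eqref{Sec4.2_FBSDE_Hama} satisfy Assumptions (H3.1)--(H3.4), with the monotonicity constant of (H3.3) equal to $\min\{\mu,1\}$. Indeed $A(\cdot,0)=(-\alpha,0,0,0)\in\mathcal L^{2,K}_{\mathbb F}(0,\infty)$ since $\alpha\in L^{2,K}_{\mathbb F}(0,\infty;\mathbb R^n)$, so (H3.1) holds; the boundedness of $A,B_j,C_j,L_i,M_{ij},S_{ij}$ together with the $t$-independence (hence boundedness) of $D_iR_i^{-1}D_i^\top$ gives the Lipschitz property (H3.2), and $\Phi$ is linear, hence Lipschitz. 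For (H3.3)(i), in $\langle A(t,\theta_1)-A(t,\theta_2),\theta_1-\theta_2\rangle$ all the cross terms $\langle A^\top\hat x,\hat y\rangle,\ \langle B_j^\top\hat x,\hat z_j\rangle,\ \langle C_j^\top\hat x,\hat r_j\rangle$ coming from the forward drift and diffusion cancel against $-\langle A\hat y,\hat x\rangle,\ -\langle B_j\hat z_j,\hat x\rangle,\ -\langle C_j\hat r_j,\hat x\rangle$ coming from the backward drift (the $\hat x$-term of $A(t,\theta)$ being $-\hat g$ with $\hat g$ having $\hat x$ as its leading term, coefficient the identity), leaving
\[
-|\hat x|^2-\langle(D_1R_1^{-1}D_1^\top L_1+D_2R_2^{-1}D_2^\top L_2)\hat y,\hat y\rangle-\sum_j\langle\cdots\hat z_j,\hat z_j\rangle-\sum_j\int_{\mathcal E}\langle\cdots\hat r_j,\hat r_j\rangle\pi_j(de),
\]
where the omitted matrices are the corresponding combinations involving $M_{ij}$ and $S_{ij}$; by (A3.2) this is bounded above by $-\min\{\mu,1\}|\hat\theta|^2$. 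Finally (H3.3)(ii) follows from the positive semi-definiteness in (A3.1), and (H3.4) is (A3.3), up to the harmless adjustment of Remark \ref{Sec3_Remark_Artificial} in the borderline equality case. Theorem \ref{Sec3_THM} then yields a unique solution of \eqref{Sec4.2_FBSDE_Hama} in $\mathcal L^{2,K}_{\mathbb F}(0,\infty)$, Lemma \ref{Sec4.2_Lemma_Hama} transfers this to the unique solvability of \eqref{Sec4.2_FBSDE_NZSSDG} in the asserted space, and Lemma \ref{Sec4.2_Lemma} shows that $(u_1,u_2)$ given by \eqref{Sec4.2_NashPoint} is a Nash equilibrium point.

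\emph{Part (ii).} Let $(\bar u_1,\bar u_2)\in\mathcal V$ be an arbitrary Nash equilibrium point, with state trajectory $(\bar y,\bar z,\bar r)$. For each fixed $i\in\{1,2\}$, by definition $\bar u_i$ minimizes $v_i\mapsto J_i(v_i,\bar u_{3-i})$ over $\mathcal V_i(\bar u_{3-i})$, i.e.\ it is an optimal control of the decoupled LQ problem \eqref{Sec4.2_Cost_i}--\eqref{Sec4.2_Sys_i}, whose extra nonhomogeneous term $D_{3-i}\bar u_{3-i}+\alpha$ again lies in $L^{2,K}_{\mathbb F}(0,\infty;\mathbb R^n)$. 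The additional hypothesis \eqref{Sec4.2_Assumption_Last} is precisely the (A1.1)-type coercivity for this decoupled problem, while (A3.3) furnishes the corresponding (A1.2); hence Theorem \ref{Sec4.1_THM} applies and gives a unique solution $(\bar x_i,\bar y,\bar z,\bar r)\in\mathcal L^{2,K}_{\mathbb F}(0,\infty)$ of \eqref{Sec4.2_FBSDE_SOC_i} (with $u_{3-i}$ replaced by $\bar u_{3-i}$), whose $(y,z,r)$-part must be the optimal state trajectory of that problem, which is precisely the Nash state $(\bar y,\bar z,\bar r)$; moreover the optimal control is unique, so $\bar u_i(t)=R_i^{-1}(t)D_i^\top(t)\bar x_i(t)$. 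Substituting $D_{3-i}\bar u_{3-i}=D_{3-i}R_{3-i}^{-1}D_{3-i}^\top\bar x_{3-i}$ back into the two copies ($i=1,2$) of \eqref{Sec4.2_FBSDE_SOC_i} shows that $(\bar x_1,\bar x_2,\bar y,\bar z,\bar r)\in L^{2,K}_{\mathbb F}(0,\infty;\mathbb R^n)\times\mathcal L^{2,K}_{\mathbb F}(0,\infty)$ solves \eqref{Sec4.2_FBSDE_NZSSDG}. By the uniqueness established in (i), $(\bar x_1,\bar x_2,\bar y,\bar z,\bar r)=(x_1,x_2,y,z,r)$, so $\bar u_i=R_i^{-1}D_i^\top\bar x_i=R_i^{-1}D_i^\top x_i=u_i$ for $i=1,2$; thus the Nash equilibrium point is unique.

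\emph{Main obstacle.} The verification of (H3.1)--(H3.4) for \eqref{Sec4.2_FBSDE_Hama} and the linear bookkeeping are routine. The delicate step is the uniqueness argument in (ii): one must confirm that the optimal state trajectory of each decoupled player-$i$ problem genuinely coincides with the Nash state $(\bar y,\bar z,\bar r)$ and that the adjoint processes $\bar x_i$ indeed belong to $L^{2,K}_{\mathbb F}(0,\infty;\mathbb R^n)$, so that the glued tuple is an admissible competitor for the uniqueness statement of (i); the extra coercivity \eqref{Sec4.2_Assumption_Last} is exactly what makes each decoupled problem fall under the scope of Theorem \ref{Sec4.1_THM}.
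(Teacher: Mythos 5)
Your proposal is correct and follows essentially the same route as the paper's proof: part (i) by verifying (H3.1)--(H3.4) for the auxiliary system \eqref{Sec4.2_FBSDE_Hama} (with monotonicity constant $\min\{\mu,1\}$ from (A3.1)--(A3.3)) and then invoking Theorem \ref{Sec3_THM}, Lemma \ref{Sec4.2_Lemma_Hama} and Lemma \ref{Sec4.2_Lemma}; part (ii) by viewing each $\bar u_i$ as the optimal control of the decoupled player-$i$ LQ problem, applying the uniqueness of Theorem \ref{Sec4.1_THM} under \eqref{Sec4.2_Assumption_Last}, and gluing the two adjoint equations back into \eqref{Sec4.2_FBSDE_NZSSDG} so that the uniqueness from (i) forces $(\bar u_1,\bar u_2)=(u_1,u_2)$. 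Your write-up is in fact more explicit than the paper's at the verification of the monotonicity condition and at the gluing step, but no new idea is introduced.
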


\begin{proof}
(i) Under Assumptions (A3.1)-(A3.3), by Theorem \ref{Sec3_THM}, the
forward-backward SDE \eqref{Sec4.2_FBSDE_Hama} admits a unique
solution in $\mathcal L^{2,K}_{\mathbb F}(0,\infty)$. With the help
of Assumptions (A2.1)-(A2.3) and Lemma \ref{Sec4.2_Lemma_Hama}, the
forward-backward SDE \eqref{Sec4.2_FBSDE_NZSSDG} admits also a
unique solution in the space $L^{2,K}_{\mathbb F}(0,\infty;\mathbb
R^n)\times \mathcal L^{2,K}_{\mathbb F}(0,\infty)$. Moreover, by
Lemma \ref{Sec4.2_Lemma}, $(u_1,u_2)$ defined by
\eqref{Sec4.2_NashPoint} provides a Nash equilibrium point for
Problem (NZSSDG).

(ii) Let $(\bar u_1, \bar u_2)$ be another Nash equilibrium point
for Problem (NZSSDG). From the viewpoint of Lemma
\ref{Sec4.2_Lemma}, for any $i=1,2$, fix $\bar u_{3-i}$, then $\bar
u_i$ is an optimal control of the LQ SOC problem
\eqref{Sec4.2_Cost_i}-\eqref{Sec4.2_Sys_i}. Thanks to
\eqref{Sec4.2_Assumption_Last} and Theorem \ref{Sec4.1_THM}, $\bar
u_i$ must have the form:
\[
\bar u_i(t) = R_i^{-1}(t)D_i^\top(t) x_i(t),\quad t\in [0,\infty),
\]
where $(x_i,y,z)$ satisfies \eqref{Sec4.2_FBSDE_SOC_i}. Combining
the two cases: $i=1$ and $i=2$, we get the conclusion:
\[
\left(\begin{array}{ccc} \bar u_1(t) \\ \bar u_2(t)
\end{array}\right) = \left(\begin{array}{ccc} u_1(t) \\ u_2(t)
\end{array}\right) =
\left(\begin{array}{ccc} R_1^{-1}(t)D_1^\top(t) x_1(t)\\
R_2^{-1}(t)D_2^\top(t) x_2(t)
\end{array}\right),\quad t\in [0,\infty),
\]
where $(x_1,x_2,y,z,r)$ is the unique solution of
\eqref{Sec4.2_FBSDE_NZSSDG}. We complete the proof.
\end{proof}

\begin{example}
In this example, we would like to focus ourselves on a special case
to illustrate the results obtained in this section. Let the
dimension of the state process $y$, the dimension of Brownian motion
and the number of Poisson random measures are $1$. Under this `$1$
dimension' setting, all the involving matrix-valued processes are
real-valued indeed. Moreover, let $A$, $B$, $C$, $D$, $Q$, $L$, $M$,
$S$, $R$ in Problem (SOC) and $A$, $B$, $C$, $D_i$, $Q_i$, $L_i$,
$M_i$, $S_i$, $R_i$ ($i=1,2$) in Problem (NZSSDG) are independent of
the time variable $t$ (and then they are independent of $\omega$
also due to the $\mathbb F$-adaptedness of processes). Furthermore,
let the nonhomogeneous term $\alpha$ in \eqref{Sec4.1_Sys} and
\eqref{Sec4.2_Sys} vanish.

(1). Let $D\neq 0$, $Q\geq 0$, $L>0$, $M>0$, $R>0$, and there exists
a constant $\kappa>0$ such that $S(e)\geq \kappa$ for all
$e\in\mathcal E$. Define
\[
\mu = \min\left\{ \frac{D^2}{R}, L, M, \kappa \right\}.
\]
For any $K\in (0,2\mu)$, Assumptions (A1.1)-(A1.2) are satisfied. By
Theorem \ref{Sec4.1_THM}, the forward-backward SDE
\eqref{Sec4.1_FBSDE_SOC} admits a unique solution, and Problem (SOC)
has a unique optimal control which is given by
\eqref{Sec4.1_OptimalControl}.

(ii). For $i=1,2$, let $D_i\neq 0$, $Q_i\geq 0$, $L_i>0$, $M_i>0$,
$R_i>0$, and there exists a constant $\kappa>0$ such that
$S(e)\geq\kappa$ for all $e\in\mathcal E$. Moreover, we assume
\begin{equation}
A< -\frac 1 2 B^2 -\frac 1 2 \int_{\mathcal E} |C(e)|^2 \pi(de).
\end{equation}
Define
\[
\begin{aligned}
& \mu = \min \bigg\{ \frac{D_1^2}{R_1}, \frac{D_2^2}{R_2}, L_1, L_2,
M_1, M_2, \kappa, \frac{D_1^2}{R_1}L_1 +\frac{D_2^2}{R_2}L_2,
\frac{D_1^2}{R_1}M_1 +\frac{D_2^2}{R_2}M_2, \bigg(\frac{D_1^2}{R_1}
+\frac{D_2^2}{R_2}\bigg)\kappa \bigg\},\\
& \rho = \min\bigg\{-\frac 1 2 B^2 -\frac 1 2 \int_{\mathcal E}
|C(e)|^2 \pi(de)-A, \mu, 1 \bigg\}.
\end{aligned}
\]
For any $K\in (0,2\rho)$, it is easy to check Assumptions
(A2.1)-(A2.3), (A3.1)-(A3.3), and \eqref{Sec4.2_Assumption_Last} are
satisfied. By Theorem \ref{Sec4.2_THM}, the forward-backward SDE
\eqref{Sec4.2_FBSDE_NZSSDG} admits a unique solution, and Problem
(NZSSDG) has a unique Nash equilibrium point which is given by
\eqref{Sec4.2_NashPoint}.
\end{example}

\section{Conclusion}

In this paper, we investigate a kind of forward-backward stochastic
differential equations (SDEs) driven by both Brownian motions and
Poisson processes on an infinite horizon. In our setting, besides
the coupling of mappings $b$, $\sigma$, $\gamma$ and $g$, the two
initial values are also coupled. We employ a new technique to treat
the coupling between the initial values. For this kind of
forward-backward SDEs, we establish an existence and uniqueness
theorem by virtue of the method of continuation under some
monotonicity conditions. Some important properties including
stability and comparison of solutions are also addressed. These
results generalize that of Peng and Shi \cite{PengShi2000}.

The theoretical results are applied to solve an infinite horizon
linear-quadratic (LQ) backward stochastic optimal control problem
and an LQ nonzero-sum backward stochastic differential game. Under
suitable conditions, we get the solvability of the Hamiltonian
systems related to the LQ control problem and LQ game problem, which
are linear forward-backward SDEs of the type studied previously.
Then the unique optimal control and the unique Nash equilibrium
point are obtained in closed forms, respectively.

\end{document}